\newtheorem{theorem}{\bf Theorem}
\newtheorem{corollary}[theorem]{\bf Corollary}
\newtheorem{lemma}[theorem]{\bf Lemma}
\newtheorem{afirmacion}[theorem]{\bf Claim}
\newtheorem{proposition}[theorem]{\bf Proposition}
\newtheorem{definition}[theorem]{\bf Definition}
\newtheorem{remark}[theorem]{\bf Remark}
\numberwithin{equation}{section}
\numberwithin{theorem}{section}
\numberwithin{figure}{section}
\def\p{\shortmid}
\def\pp{\shortparallel}
\def\R{\mathbb{R}}
\def\R{\mathbb{R}}
\def\CL{\mathcal{L}}
\def\CV{\mathcal{V}}
\def\CA{\mathcal{A}}
\begin{document}
\renewcommand{\thefootnote}{}
\footnotetext{The authors were partially supported by MICINN-FEDER, Grant No. MTM2016- 80313-P and  Junta de Andaluc\'ia Grant No. FQM325 }

\title{Equilibrium of Surfaces in a Vertical  Force Field} 
\author{Antonio Mart\'{\i}nez and A.L. Mart\'inez-Trivi\~no}
\vspace{.1in}

\date{}
\maketitle
\noindent {\footnotesize Department of Geometry and Topology, University of Granada, E-18071 Granada, Spain.\\ 
e-mails: amartine@ugr.es, aluismartinez@ugr.es}
\begin{abstract}
In this paper we study $\varphi$-minimal surfaces in $\R^3$ when the function $\varphi$ is invariant under a two-parametric group of translations. Particularly those which are complete graphs over domains in $\R^2$. We describe a full classification of complete flat  embedded  $\varphi$-minimal surfaces if  $\varphi$ is strictly monotone and characterize rotational $\varphi$-minimal surfaces
 by its behavior at infinity when $\varphi$ has a quadratic growth.
\end{abstract}

\noindent 2010 {\it  Mathematics Subject Classification}: 53C42; 35J60, 

\noindent {\it Keywords:}  $\varphi$-minimal, elliptic equation, weighted volume functional.
\everymath={\displaystyle}
\section{Introduction}
The equilibrium  of a flexible, inextensible surface  $\Sigma$  in a force field ${\cal F}=(X,Y,Z)$ of $\R^3$, was given by Poisson \cite[pp. 173-187]{Poisson} and   when the intrinsic forces of the surface  are assumed to be equal, the external force must have a potential $ {\cal T}$  which corresponds, up to a constant, with the tension of the surface, that is, 
\begin{equation}
d{\cal T} + X dx + Y dy + Z dz = 0.\label{0fminimal}
\end{equation} In this case, the equilibrium condition is given  in terms of the mean curvature vector  ${\text {\bf H}}$ of $\Sigma$ as follows: 
\begin{align} 
&{\text {\bf H}} {\cal T}+ {\cal F}^\perp = 0\label{fminimal}
\end{align}
where  $\perp$ denotes the projection to the normal bundle of  $\Sigma$.

\

From equations \eqref{0fminimal} and \eqref{fminimal} Poisson obtains:
\begin{itemize}
\item {\sl The minimal surface equation},  by taking ${\cal F}=0$ and ${\cal T}=const.$.
\item  {\sl The capillary surface equation},  by taking  ${\cal T}={\text const}$ and ${\cal F}$ normal to the surface with $\|{\cal F}\|$ depending linearly on the height.
\item  {\sl The equation of a heavy surface in a gravitational field},  by taking  ${\cal T}=(0,0,g\,{\cal E}(z))$, $g$ = gravitational constant and ${\cal E}(z)$ a density function on  the surface.
\end{itemize}

In this paper we are interested in the last case, that is, when  the equation \eqref{fminimal}  gives
\begin{equation}
{\text {\bf H}} = (\overline{\nabla} \varphi)^\perp = \dot{\varphi} \ \vec{e}_3^{\,\perp}, \label{vforce}
\end{equation}
where $\varphi (z) = \log\int^z _{z_0}g\,{\cal E}(t))dt$, $\overline{\nabla}$ is the gradient operator in $\R^3$ and  $(\ \dot{ }\ )$  denotes derivate respect to the third coordinate. To get a regular problem we have to restrict the surfaces to the region of $\R^3$ where $\varphi$ is  regular. 
These surfaces are a particular case of the so called $f$-minimal surfaces (see \cite{CMZ}) for which the function $f$ depends only on the height. They   can be   viewed either as critical points of the weighted volume functional
\begin{equation}
\label{critiarea}
V_{\varphi}(\Sigma):=\int_{\Sigma}e^{\varphi}\, dA_{\Sigma},
\end{equation}
where $dA_{\Sigma}$ is the volume element of $\Sigma$, or as  minimal surfaces in $\R^3$ with  the conformally changed metric
\begin{equation} 
\label{ilm}
G_\varphi:=  \mathrm{e}^\varphi\ \langle \cdot , \cdot\rangle.
\end{equation}
From this property of minimality, a tangency principle can be applied and  any two different $\varphi$-minimal  surfaces cannot ``touch" each other at one interior or boundary point (see \cite[Theorem 1 and Theorem 1a]{E}).

\

\renewcommand{\theequation}{\arabic{equation}}
\numberwithin{equation}{section}
Any surface satisfying $\eqref{vforce}$ will be called  {\sl $[\varphi, \vec{e}_3]$-minimal } 
and if  $\Sigma$ is   the vertical graph of a function $u:\Omega\subseteq \R^2 \longrightarrow \R$, we  also refer to $u$ as  $[\varphi, \vec{e}_3]$-minimal. Hence, $u$ is  $[\varphi, \vec{e}_3]$-minimal if and only if it solves the following $[\varphi, \vec{e}_3]$-minimal equation:\begin{equation}
(1+u_{x}^2)u_{yy} + (1+u_{y}^2)u_{xx} - 2u_{y}u_{x}u_{xy}= \dot{\varphi}(u)\left( 1+u_{x}^2+u_{y}^{2} \right).  \label{fe}
\end{equation} 

\

This kind of surfaces has been widely studied specially from the viewpoint of calculus of variations. Classical results about the Euler equation and the existence and regularity for  the solutions of the Plateau problem for \eqref{critiarea} can be found in \cite{BHT, H1, H2, HK, T}.

But contributions from a more geometric viewpoint only has been given for some  particular  functions $\varphi$. It is interesting to mention 
\begin{itemize}
\item The case of $\varphi(z) = z$:  it corresponds with translating solitons, that is, surfaces  in $\R^3$ such that $$ t\mapsto \Sigma + t \vec{e}_3$$ is a mean curvature flow, i. e. such that normal component of the velocity at each point is equal to the mean curvature at that point:
$ {\text {\bf H}} =  \vec{e}_3^{\,\perp}$. Recent advances in the understanding of its local and global geometry  can be found in \cite{CSS, JS, HMW, HIMW, HIMW2,MSHS1, MSHS2, SX, W}
\item  The case of $\varphi(z) =\alpha \log z$, $\alpha$=const. It includes the two dimensional examples analogues of the catenaries (when $\alpha =1$). We refer to \cite{BHT, D, DH, N, Rafa} for some  progress made in this family.
\end{itemize}

The aim of this paper is develop a general  and systematic approach to study 
 $[\varphi, \vec{e}_3]$-minimal surfaces from a geometric viewpoint.
Nonetheless, the class of $[\varphi, \vec{e}_3]$-minimal surfaces  is indeed very large and much richer in whats refers to examples and geometric behaviors. Although  new ideas are needed for its study, it will be necessary, in order to get classification results,  to impose some additional conditions to the function $\varphi$. Here, as a general assumption we will always consider $\varphi$ strictly monotone, that is,
\begin{align}\label{hip}
& \varphi:]a,b[\subseteq \R \rightarrow \mathbb{R} \text{ is a strictly increasing (or decreasing) function}\\
& \text{and  $\Sigma\subset \R^2 \times ]a,b[$.}\nonumber
\end{align}

\
 
 Invariant surfaces by an uniparametric group of rigid motions in $\R^3$ are related with the one dimensional case of \eqref{fe}. Since $\varphi$ is taking  so arbitrary, we   only   consider $[\varphi, \vec{e}_3]$-minimal surfaces invariant by two types of uniparametric groups, namely,  groups of horizontal translations and the group of vertical rotations.
 
 In the first case, besides vertical planes, we may consider that $u=u(x)$, $x\in I$ depends  only on  $x$. Then, from \eqref{fe}, the  generalized cylinder $\Sigma =\{ (x, y, u(x)) \ | \ x\in I , y\in \R\}$ is a $[\varphi, \vec{e}_3]$-minimal surface if and only if $u$ satisfies
\begin{align}
u''(x)=\dot{\varphi}(u)(1+u'(x)^{2}) \label{he}
\end{align}
From its physical interpretation, any solution of \eqref{he} will be called  {\sl $\varphi$-catenary}. The corresponding generalized cylinder is called {\sl $[\varphi, \vec{e}_3]$-catenary cylinder}. If we rotate around the $x$-axis a $[\varphi, \vec{e}_3]$-catenary cylinder an angle $\theta\in ]0,\pi/2[$ and dilate by $\frac{1}{\cos \theta}$, the resulting surface is also $[\varphi, \vec{e}_3]$-minimal  and we will say it is a  {\sl tilted $[\varphi, \vec{e}_3]$-catenary cylinder}. In Theorem 3.7 we prove that any complete flat $[\varphi, \vec{e}_3]$-minimal  surface is either a vertical plane or a $[\varphi,\vec{e}_3]$-catenary cylinder (maybe tilted). 

In the second case we  consider   $[\varphi,\vec{e}_3]$-minimal surfaces that  are invariant under  the one-parameter group of rotations that fix the $\vec{e}_{3}$ direction. From \eqref{fe}, the arc-lenght parametrized generating curve  $$\gamma(s)=(x(s), 0, z(s)), \qquad   \ \ s\in I\subset\mathbb{R}$$ of a such surface satisfies
\begin{equation}
\label{gcurve}
\left\{ \begin{array}{l}
x'=\cos(\theta)\\
z'=\sin(\theta),\\
\theta'=\dot{\varphi}(z)\text{cos}(\theta)-\frac{\text{sin}(\theta)}{x}.
\end{array}\right. 
\end{equation}
In Theorems \ref{bowl} and \ref{existencecatenoid}, we establish the geometric properties of the rotational  $[\varphi,\vec{e}_3]$-minimal surfaces according two types of surfaces: one is globally convex  with only one complete embedded end (it is called a {\sl $[\varphi,\vec{e}_3]$-minimal bowl}) and the other has two complete embedded convex ends and  has a generating curve of winglike type (it is called {\sl $[\varphi,\vec{e}_3]$-minimal catenoid})

\

Very little is known  about the  geometry of the immersed $[\varphi,\vec{e}_3]$-minimal surfaces and most of the results have been proved only for translating solitons. One of the first result in that direction was obtained by  Clutterbuck, Schn\"ure, Schulze in \cite{CSS}, where they proved that when $\dot{\varphi}\equiv 1$, any rotationally symmetric solution $u=u(r)$, $r=\sqrt{x^2+y^2}$, on the exterior of a compact planar domain has de following asymptotic behaviour: 
$$ u(r) = \frac{r^2}{2} - \log r + O(r^{-1}).$$
Somewhat later Martin-Savas-Smoczyk proved in \cite{MSHS2} that any complete translating soliton with a single end asymptotic to a translating paraboloid is a translating paraboloid.
\\

In this paper we generalize the above results to $[\varphi,\vec{e}_3]$-minimal  with $\dot{\varphi}$ satisfying  the following expansion at infinity
\begin{align}\label{series2}
\dot{\varphi}(u) = \alpha u + \beta + \sum_{n=1}^\infty\frac{a_n}{u^n}, \quad a_n\in \R,
\end{align}
where either $\alpha>0$ and the first non-vanishing $a_k$ is positive or $\alpha=0$, $\beta>0$ and the first non-vanishing $a_k$ is negative. The results we prove  can be summarized in the following two theorems
\renewcommand\thetheorem{\Alph{theorem}}
\begin{theorem} If $\dot{\varphi}$ satisfies \eqref{series2}, then any  rotationally symmetric solution $u$ of  \eqref{fe} has  the following asymptotic behavior,
\begin{itemize}
\item  If  $\alpha>0$, 
\begin{equation}\label{applineal}
\varphi(u)(r)= C \ e^{\alpha \,r^{2}}  + O(r^2), \quad C>0,
\end{equation}
\item If $\alpha=0$ and  up to a constant, we have,
\begin{equation}
\label{casoalphacero}
{\cal G}(u)(r)=\frac{r^{2}}{2}-\frac{1}{\beta^2}\log(r)+ {O}(r^{-2}),
\end{equation}
 where ${\cal G}$ is the strictly increasing function given by ${\cal G}(u)=\int_{u_0}^u\frac{d\xi}{\dot{\varphi}(\xi)}$.
 \end{itemize}
\end{theorem}
\begin{theorem}
\label{unicidad}
Let  $\Sigma$ be a complete properly embedded  $[\varphi,\vec{e}_3]$-minimal surface in $\mathbb{R}^{3}$ with a single end that is smoothly asymptotic to a  $[\varphi,\vec{e}_3]$-minimal bowl, $\dot{\varphi}$ satisfying  \eqref{series2}. Then the surface $\Sigma$ is a  $[\varphi,\vec{e}_3]$-minimal bowl.
\end{theorem}
\renewcommand{\thetheorem}{\arabic{equation}}
\numberwithin{theorem}{section}
The paper is organized as follows, in  Section  2 we show some fundamental equations related to our family of surfaces and as a consequence we prove the non-existence  of  closed examples and two results about strictly convexity and mean convexity of  $[\varphi, \vec{e}_3]$-minimal surfaces. 

Section 3 is devoted to the study and classification of embedded complete flat $[\varphi,\vec{e}_{3}]$-minimal surfaces. We describe geometrically the so called  $[\varphi,\vec{e}_{3}]$-catenary cylinders and tilted $[\varphi,\vec{e}_{3}]$-catenary cylinders and characterize  them together to vertical planes as the unique examples of complete flat  $[\varphi,\vec{e}_{3}]$-minimal surfaces.

In Section 4 we study  the existence and classification of rotational examples. We construct for $\varphi$ in  a very general class of functions (strictly increasing and convex) a family of $[\varphi,\vec{e}_{3}]$-minimal bowls (which are strictly convex graphs) and $[\varphi,\vec{e}_{3}]$-minimal catenoids with a winglike shape (which resemble the usual translating catenoids in $\R^3$).

Finally, Sections 5 and 6 are devoted  to study  $[\varphi,\vec{e}_{3}]$-minimal surfaces when $\varphi$ has a quadratic growth. We provide the  asymptotic behavior of rotationally symmetric  examples and characterize $[\varphi,\vec{e}_{3}]$-minimal bowls by their behavior at infinity.

\

{\bf Acknowledgements}: The
authors are grateful to Margarita Arias, Jos\'e Antonio G\'alvez and Francisco Mart\'in  for helpful comments during the preparation of this manuscript.

\section{Some relevant equations}
Here, we will give some local fundamental equations related to $[\varphi,\vec{e}_3]$-minimal surfaces.  Let $\psi:M \longrightarrow\mathbb{R}^{3}$ be a  $2$-dimensional $[\varphi,\vec{e}_3]$-minimal immersion (maybe with a non empty boundary)  with Gauss map $N$,  induced metric $g$ and second fundamental form ${\rm {\bf A}}$. We shall denote by $\nabla$, $\Delta$ and $\nabla^2$, respectively,  the Gradient, Laplacian and Hessian  operators of $g$.  
\\

The mean curvature vector of $\psi$ is defined by ${\text {\bf H}} = {\text {\rm trace}}_g{\text {\bf A}}$ and the  symmetric bilinear form $	\CA$ given by $\CA(X,Y) = -\langle{\text {\bf A}}(X,Y),N\rangle$, $X,Y\in T\Sigma$, is called  scalar second fundamental form. The mean curvature function $H$ will be the trace of $\CA$ with respect to $g$. With this notation, \eqref{vforce} is equivalent to 
\begin{equation}
H:= - \dot{\varphi} \langle N,\vec{e}_3\rangle.\label{me}
\end{equation}
We will assume that $\varphi$ satisfies \eqref{hip} and let us introduce the height  and angle functions, respectively,  by:
$$ \mu:= \langle\psi,\vec{e}_3\rangle, \quad \eta:= \langle N,\vec{e}_3\rangle.$$ 
\begin{lemma}
\label{fl} 
 The following relations hold
\renewcommand{\theequation}{\arabic{equation}}
\setcounter{equation}{0}
\begin{align}
&\nabla \mu=\vec{e}_3^{\top},\qquad \langle\nabla\eta,\, \cdot\,\rangle=\CA(\nabla \mu,\, \cdot\,),\label{e1}\\
&\dot{\varphi}^{2}=\dot{\varphi}^{2}\vert\nabla  \mu \vert^{2}+H ^{2},\label{e2}\\
&\dot{\varphi}\nabla^{2} \mu =H \CA,\label{e3}\\
&\nabla^{2}\eta=(\nabla\CA)(\nabla  \mu,\, \cdot\,,\, \cdot\,)  +\frac{H}{\dot{\varphi}}\CA^{[2]},\label{e4}\\
&\Delta  \mu=\dot{\varphi}(1-\vert\nabla  \mu\vert^{2}),\label{e5}\\
&\Delta N + \dot{\varphi}\nabla\eta + \ddot{\varphi} \eta \nabla \mu +\vert \CA\vert^{2} N=0,\label{e6}\\
&\nabla^{2}H =-\eta\nabla^2\dot{\varphi} - (\nabla\CA)(\nabla\varphi,\,\cdot\,,\, \cdot\,)- H \CA^{[2]}+ {\cal B}\label{e7}\\ 
&\Delta \CA + (\nabla\CA)(\nabla\varphi,\, \cdot\,,\, \cdot\,)+\eta\nabla^2\dot{\varphi} + |\CA|^2\CA - {\cal B}=0,\label{e8}
\end{align}
where   $\CA^{[2]}$ and ${\cal B}$ are the symmetric 2-tensors given by the following expressions: 
\begin{align*}
&\CA^{[2]}(X,Y)=\sum_{k}\CA(X,E_{k})\CA(E_{k},Y),\\
&{\cal B}(X,Y)=\langle\nabla \dot{\varphi},X,\rangle \CA(\nabla \mu,Y) +\langle\nabla \dot{\varphi},Y\rangle \CA(\nabla \mu,X),
\end{align*}
for any vector fields $X,Y\in T\Sigma$ and any  orthonormal frame $\{E_{1},E_2\}$ of $T\Sigma$.
\end{lemma}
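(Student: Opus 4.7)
The plan is to derive \eqref{e1}--\eqref{e8} by successive differentiations of the height function $\mu$ and the angle function $\eta$, using only three ingredients: that $\vec{e}_3$ is parallel in $\R^3$, the Gauss--Weingarten formulas, the Codazzi equation (which in codimension one makes $\nabla\CA$ totally symmetric), and the defining relation $H=-\dot\varphi\,\eta$ of \eqref{me}.

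The starting point is the orthogonal decomposition $\vec{e}_3=\nabla\mu+\eta\,N$. Since $\overline{\nabla}\vec{e}_3=0$, applying $\overline{\nabla}_X$ and separating tangential from normal components simultaneously produces $\nabla_X\nabla\mu=-\eta\,\overline{\nabla}_X N$ and $X(\eta)=\CA(X,\nabla\mu)$. The second of these is precisely the gradient identity in \eqref{e1}; the first, read as a Hessian, is $\nabla^2\mu=-\eta\,\CA$, and substituting $-\eta=H/\dot\varphi$ gives \eqref{e3}, while tracing with respect to $g$ delivers \eqref{e5}. Multiplying the norm decomposition $|\nabla\mu|^2+\eta^2=1$ by $\dot\varphi^2$ and using $H^2=\dot\varphi^2\eta^2$ yields \eqref{e2}. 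Equation \eqref{e4} is then obtained by differentiating the gradient identity in \eqref{e1}: the Leibniz rule gives $\nabla^2\eta(X,Y)=(\nabla_X\CA)(\nabla\mu,Y)+\CA(\nabla_X\nabla\mu,Y)$, and the first term equals $(\nabla\CA)(\nabla\mu,X,Y)$ by Codazzi while the second equals $(H/\dot\varphi)\,\CA^{[2]}(X,Y)$ by \eqref{e3}.

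For the Simons-type identities \eqref{e6}--\eqref{e8} I would invoke the two classical formulas for a hypersurface in $\R^3$,
\[
\Delta N=\nabla H-|\CA|^{2}N,\qquad \Delta\CA(X,Y)=\nabla^2 H(X,Y)+H\,\CA^{[2]}(X,Y)-|\CA|^2\CA(X,Y),
\]
and substitute $H=-\dot\varphi\,\eta$. Differentiating this product once yields $\nabla H=-\ddot\varphi\,\eta\,\nabla\mu-\dot\varphi\,\nabla\eta$ and, inserted into the first formula, gives \eqref{e6}. Differentiating once more and applying \eqref{e4} to $\nabla^2\eta$ produces the Hessian identity \eqref{e7}: the symmetric mixed piece $d\dot\varphi\otimes d\eta+d\eta\otimes d\dot\varphi$ is identified with ${\cal B}$ via \eqref{e1} and $\nabla\dot\varphi=\ddot\varphi\,\nabla\mu$, and the Codazzi term $(\nabla\CA)(\nabla\varphi,\cdot,\cdot)$ arises from $\dot\varphi\,\nabla^2\eta$ with $\nabla\varphi$ read as the intrinsic gradient $\dot\varphi\,\nabla\mu$ of $\varphi\circ\psi$ along $\Sigma$. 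Finally, \eqref{e8} is obtained by substituting \eqref{e7} into Simons' identity.

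No individual step goes beyond routine Riemannian calculus; the real work is notational bookkeeping. The step I expect to be most error-prone is the identification of ``$\nabla\varphi$'' in \eqref{e7}--\eqref{e8} with $\dot\varphi\,\nabla\mu$ and the correct symmetrisation of the mixed Hessian of $H$ into the tensor ${\cal B}$, since a sign slip there propagates through both \eqref{e7} and \eqref{e8}.
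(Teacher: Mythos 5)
Your proposal is correct and follows essentially the same route as the paper: both rest on the decomposition $\vec{e}_3=\nabla\mu+\eta N$, the relation $H=-\dot{\varphi}\,\eta$, the Codazzi symmetry of $\nabla\CA$, the classical identity $\Delta N=\nabla H-\vert\CA\vert^{2}N$, and Simons' identity, with \eqref{e7} obtained by differentiating $H=-\dot{\varphi}\eta$ twice and inserting \eqref{e4}. The only (cosmetic) difference is that you obtain \eqref{e1} and \eqref{e3} simultaneously from one covariant differentiation of the parallel field $\vec{e}_3$, whereas the paper computes each derivative separately; the step you flag as delicate (the sign and identification of the mixed term ${\cal B}$ via $\nabla\dot{\varphi}=\ddot{\varphi}\nabla\mu$ and \eqref{e1}) is indeed where the paper's own displayed computation is loosest.
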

\begin{proof}
\begin{enumerate}
\item[(1)]  Differentiating $\mu$ and $\eta$ respect to any $X\in T\Sigma$, we get, 
\begin{align*}
&\langle\nabla  \mu,X\rangle=d \mu(X)=\langle \vec{e}_{3}^{\top},X\rangle, \\
&\langle \nabla\eta,X\rangle =d\eta(X)=\langle dN(X),\vec{e}_{3}^{\top}\rangle=\CA(X, \vec{e}_{3}^{\top}).
\end{align*}
\item[(2)] From \eqref{me} and  \eqref{e1}, it is clear that
$$1=\vert \nabla  \mu\vert^{2}+\frac{H ^{2}}{\dot{\varphi}^{2}}.$$
\item[(3)] From definition of the Hessian operator, 
$$\nabla^{2} \mu(X,Y)=XY( \mu)-(\nabla_{X}Y)( \mu)=\langle {\text{\bf A}}(X,Y),e_{3}\rangle = -\CA(X,Y)\eta.$$
So \eqref{e3} follows from  \eqref{me}.
\item[(4)] From Codazzi equation and \eqref{me}:
\begin{align*}
&\nabla^{2}\eta(X,Y)= \sum_{k}(\nabla\CA)(E_k,X,Y)E_{k}(\mu)-\sum_{k}\CA(X,E_k)\CA(Y,E_k)\eta =\\
&=(\nabla\CA )(\nabla \mu, X,Y)+\frac{H }{\dot{\varphi}}A^{[2]}(X,Y).
\end{align*}
\item[(5)] From \eqref{e2} and \eqref{e3},
$$ \Delta \mu = \sum_{k}\nabla^2\mu(E_k,E_k)=\frac{H^2}{\dot{\varphi}}=\dot{\varphi}(1-\vert\nabla  \mu\vert^{2}).$$
\item[(6)]  As $H=-\dot{\varphi} \eta$, we have $$\nabla H =-\ddot{\varphi}\eta\nabla u-\dot{\varphi}\nabla\eta,$$ and \eqref{e6} follows from the well known fact that  $\Delta N=\nabla H -\vert A\vert^{2}N$.
\item[(7)] From \eqref{me} and \eqref{e4} we obtain
\begin{align*}
&\nabla^{2}H (X,Y)=XY(H )-(D_{X}Y)H = \\ 
&=-\eta\nabla^{2}\dot{\varphi}(X,Y)+\dot{\varphi}\nabla^{2}\eta(X,Y)
+\langle\nabla\dot{\varphi},Y\rangle\langle X,\nabla\eta\rangle + \langle\nabla \dot{\varphi},X\rangle\langle Y,\nabla\eta\rangle=\\
&=-\eta\nabla^2\dot{\varphi}(X,Y) - (\nabla\CA)(\nabla\varphi,X,Y)- H \CA^{[2]}(X,Y)+ {\cal B}(X,Y).
\end{align*}
which give the proof of \eqref{e7}.
\item[(8)] Using the well known Simon's identity:
$$ \Delta \CA = \nabla^2 H - |\CA|^2 \CA + H\CA^{[2]} $$ and  \eqref{e7}  we obtain \eqref{e8}.
\end{enumerate}
\end{proof}
From this Lemma we have,
\begin{corollary}
\label{belowplane}
If  $\varphi:]a,b[\rightarrow\mathbb{R}$, is  a strictly increasing (or decreasing) function, then the height function $\mu$ of $\psi$ cannot attain a local maximum (or local minimum) at  any interior point.
\end{corollary}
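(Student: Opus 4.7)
The plan is to read the statement off the height--function identity \eqref{e5}, namely $\Delta\mu=\dot{\varphi}(1-|\nabla\mu|^{2})$, combined with the elementary second-derivative test. I will treat the strictly increasing case with a local maximum; the strictly decreasing case with a local minimum is completely symmetric (just flip the sign).

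Assume, towards a contradiction, that $\mu$ attains a local maximum at an interior point $p\in M$. The first--order condition gives $\nabla\mu(p)=0$ (equivalently $\vec{e}_{3}^{\,\top}(p)=0$ by \eqref{e1}, i.e.\ the tangent plane at $p$ is horizontal), and the second-order condition makes the Hessian $\nabla^{2}\mu$ negative semidefinite at $p$, so $\Delta\mu(p)\leq 0$. Specializing \eqref{e5} at $p$ yields
\begin{equation*}
\Delta\mu(p)=\dot{\varphi}(\mu(p))\bigl(1-|\nabla\mu(p)|^{2}\bigr)=\dot{\varphi}(\mu(p)).
\end{equation*}
Because $\varphi$ is strictly increasing on $]a,b[$ and $\mu(p)\in]a,b[$, the right-hand side is strictly positive, which contradicts $\Delta\mu(p)\leq 0$. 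This completes the argument.

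There is essentially no technical obstacle: the whole proof is one line from \eqref{e5} once the second--derivative test has been applied. The only point that deserves a line of care is the degenerate possibility $\dot{\varphi}(\mu(p))=0$. In the standard reading adopted in \eqref{hip}, strict monotonicity is taken to mean $\dot{\varphi}$ has constant sign without interior zeros, so this case does not arise. If one insists on the weaker reading that allows isolated zeros of $\dot\varphi$, the argument above only delivers $\dot\varphi(\mu(p))=0$ and $\Delta\mu(p)=0$; one then closes the gap by applying the strong maximum principle to \eqref{e5} in a neighborhood of $p$ where $|\nabla\mu|<1$ (such a neighborhood exists since $\nabla\mu(p)=0$), concluding that $\mu$ is locally constant and hence $\psi$ locally coincides with a horizontal plane, which is the only situation compatible with a genuine interior local maximum.
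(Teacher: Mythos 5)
Your argument is correct and is essentially the paper's own (implicit) proof: the corollary is presented as a direct consequence of Lemma \ref{fl}, namely via \eqref{e1} and \eqref{e5}, since at an interior local maximum $\nabla\mu(p)=0$ and $\Delta\mu(p)\leq 0$, while \eqref{e5} gives $\Delta\mu(p)=\dot{\varphi}(\mu(p))>0$. Your closing caveat is not needed in the paper's setting, where ``strictly increasing'' is used with $\dot{\varphi}$ nonvanishing throughout (the formulas \eqref{e2}--\eqref{e4} already divide by $\dot{\varphi}$); note also that under the genuinely weaker reading the statement itself would fail (a horizontal plane at a level where $\dot{\varphi}=0$ is $[\varphi,\vec{e}_3]$-minimal and its height function is constant), so your maximum-principle addendum correctly identifies local planarity but cannot rescue the corollary there.
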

\begin{corollary}
There is no any closed $2$-dimensional $[\varphi,\vec{e}_{3}]$-minimal immersion  $ \psi:M\longrightarrow\mathbb{R}^{2}\times ]a,b[$.
\end{corollary}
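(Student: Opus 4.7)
The plan is to derive the nonexistence as an immediate consequence of the preceding Corollary \ref{belowplane}. The key observation is that a closed (compact, boundaryless) immersion forces the height function $\mu = \langle \psi, \vec{e}_3\rangle$ to attain interior extrema of both kinds, while the monotonicity of $\varphi$ rules out at least one of these via Corollary \ref{belowplane}.

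Concretely, I would argue by contradiction. Suppose $\psi:M\to \mathbb{R}^2\times\,]a,b[$ is a closed $2$-dimensional $[\varphi,\vec{e}_3]$-minimal immersion. Since $M$ is compact and $\mu$ is continuous, it attains its maximum $\mu_{\max}$ and minimum $\mu_{\min}$ on $M$. Because $\partial M = \emptyset$, both extrema are realized at interior points of $M$. Now apply the standing hypothesis \eqref{hip}: $\varphi$ is strictly increasing or strictly decreasing on $\,]a,b[\,$. In the increasing case, Corollary \ref{belowplane} prohibits an interior local maximum of $\mu$, contradicting the existence of the point where $\mu=\mu_{\max}$; in the decreasing case, it prohibits an interior local minimum, contradicting the existence of the point where $\mu=\mu_{\min}$. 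Either way we reach a contradiction, so no such closed immersion exists.

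There is no real obstacle here, since the analytic content (the impossibility of interior extrema, which itself rests on equation \eqref{e5} together with the strict sign of $\dot{\varphi}$ via the maximum principle) has already been isolated in Corollary \ref{belowplane}; the present statement is just the compactness packaging of that fact.
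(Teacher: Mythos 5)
Your argument is correct and is exactly the route the paper intends: the corollary is stated as an immediate consequence of Corollary \ref{belowplane} (itself resting on equation \eqref{e5} and the maximum principle), with compactness of $M$ and $\partial M=\emptyset$ forcing interior extrema of the height function $\mu$, one of which is forbidden by the strict monotonicity of $\varphi$. Nothing is missing.
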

About the sign of the curvatures of $\psi$ we have,
\begin{theorem}
\label{pmcurvaturamedia}
Let $\varphi:]a,b[\rightarrow\mathbb{R}$ be  a strictly increasing  function satisfying 
\begin{equation} 
\ddot{\varphi}  + \lambda \, \dot{\varphi}^2 \geq 0, \qquad \text{{\rm for some constant  $\lambda>0$}},\label{condition}
\end{equation}
and let $\psi:\Sigma\longrightarrow\mathbb{R}^{2}\times]a,b[$ be a $2$-dimensional  $[\varphi,\vec{e}_3]$-minimal immersion with  $H \leq 0$. If $H $ vanishes anywhere, then $H $  vanishes everywhere and $\psi(\Sigma)$ lies in a vertical plane.
\end{theorem}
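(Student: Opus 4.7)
The plan is to reduce the statement to Hopf's strong maximum principle, applied to a weighted version of the angle function $\eta = \langle N, \vec e_3\rangle$.

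First, I would take the $\vec e_3$-component of the vector identity \eqref{e6}. Since $\vec e_3$ is constant, $\langle \Delta N, \vec e_3\rangle = \Delta\eta$; and from the decomposition $\vec e_3 = \nabla\mu + \eta N$ together with \eqref{e1}, one has $\langle\nabla\eta, \vec e_3\rangle = \langle\nabla\eta, \nabla\mu\rangle$ and $\langle\nabla\mu, \vec e_3\rangle = |\nabla\mu|^2 = 1-\eta^2$. This yields the scalar equation
\[
\Delta\eta + \dot\varphi\,\langle\nabla\eta, \nabla\mu\rangle + \bigl[\ddot\varphi(1-\eta^2) + |\CA|^2\bigr]\eta = 0.
\]
The coefficient of $\eta$ here has no definite sign under \eqref{condition}, so I would rewrite the equation via the weighted substitution $f := e^{-\lambda\varphi}\eta$. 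A direct Laplacian computation, using $\Delta\mu = \dot\varphi\eta^2$ from \eqref{e5}, transforms it into
\[
\Delta f + (1+2\lambda)\,\dot\varphi\,\langle\nabla f, \nabla\mu\rangle + C\,f = 0,
\]
where
\[
C = (1+\lambda)\,|\nabla\mu|^2\bigl(\ddot\varphi + \lambda\dot\varphi^2\bigr) + \lambda\dot\varphi^2\eta^2 + |\CA|^2.
\]
Under \eqref{condition} and $\lambda > 0$, every summand in $C$ is non-negative, so $C \geq 0$.

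Now $H = -\dot\varphi\,\eta \geq 0$ and $\dot\varphi > 0$ force $\eta \leq 0$, hence $f \leq 0$, and therefore $-Cf \geq 0$. Thus $f$ satisfies
\[
\Delta f + (1+2\lambda)\,\dot\varphi\,\langle\nabla f, \nabla\mu\rangle \geq 0,
\]
an elliptic inequality with \emph{no} zero-order term. If $H(p) = 0$ at some interior point $p$, then $f(p) = 0$ is an interior maximum of the non-positive function $f$, so Hopf's strong maximum principle forces $f \equiv 0$, hence $H \equiv 0$, on the connected component containing $p$. Finally, $\eta \equiv 0$ means $\vec e_3$ is everywhere tangent to $\Sigma$, so $\Sigma$ is a vertical cylinder over a plane curve $\gamma$; the $[\varphi,\vec e_3]$-minimal equation \eqref{vforce} then collapses to $H = 0$, which forces $\gamma$ to be a straight line and $\psi(\Sigma)$ to lie in a vertical plane.

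The main technical hurdle is guessing the weight $e^{-\lambda\varphi}$: it is designed precisely so that the new zero-order coefficient of $f$ isolates the combination $\ddot\varphi + \lambda\dot\varphi^2$ as a factor (multiplied by the manifestly non-negative $(1+\lambda)|\nabla\mu|^2$), thereby activating the hypothesis \eqref{condition}, while the auxiliary terms $\lambda\dot\varphi^2\eta^2$ and $|\CA|^2$ take care of themselves.
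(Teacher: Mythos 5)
Your proposal is correct and follows essentially the same route as the paper: the same weighted function $e^{-\lambda\varphi}\eta$, the same elliptic identity whose zero-order coefficient is $(1+\lambda)(\ddot{\varphi}+\lambda\dot{\varphi}^{2})|\nabla\mu|^{2}+\lambda H^{2}+|\CA|^{2}$ (note $\lambda H^{2}=\lambda\dot{\varphi}^{2}\eta^{2}$), and the same application of the strong maximum principle to the nonpositive function $e^{-\lambda\varphi}\eta$. The only cosmetic differences are that you derive the identity directly rather than by combining the equations for $\Delta(e^{-\lambda\varphi})$ and $\Delta\eta$, and that you spell out the final step (vertical cylinder with $H\equiv 0$, hence a vertical plane) which the paper leaves implicit.
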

\begin{proof}
By using \eqref{me} and the equations \eqref{e1}, \eqref{e2},  \eqref{e5} and  \eqref{e6}  in Lemma \ref{fl}, we have
\begin{align*}
&\Delta  (\mathrm{e}^{-\lambda\varphi})+\lambda\mathrm{e}^{-\lambda\varphi}(\ddot{\varphi}|\nabla\mu|^2 + H^2- \lambda\dot{\varphi}^2|\nabla\mu|^2)=0,\\
& \Delta\eta+\dot{\varphi}\langle\nabla\eta,\nabla \mu\rangle+(\vert A\vert^{2}+\ddot{\varphi}\vert\nabla \mu\vert^{2})\eta=0.
\end{align*}
Thus, we obtain 
\begin{align*}
& \Delta  (\mathrm{e}^{-\lambda\varphi} \eta) + (2 \lambda+ 1) \langle \nabla(\mathrm{e}^{-\lambda\varphi} \eta), \nabla\varphi\rangle = \\
&=- \eta  \mathrm{e}^{-\lambda\varphi}(  (\lambda+1) (\ddot{\varphi} + \lambda\,\dot{\varphi}^2)|\nabla \mu|^2 + \lambda H^2 + |\CA|^2).
\end{align*}
But, by hypothesis, $\eta$ is a  nonnegative function, and so, from the strong maximum principle, if it vanishes anywhere  then it  vanishes everywhere, which  concludes the proof.
\end{proof}

\begin{theorem}
\label{pmcurvaturagauss}
Let $\varphi:]a,b[\rightarrow\mathbb{R}$ be  a strictly increasing  function satisfying  $\dddot{\varphi}\leq0$, 
and let $\psi: \Sigma\longrightarrow\mathbb{R}^{2}\times]a,b[$ be a $2$-dimensional  locally convex $[\varphi,\vec{e}_3]$-minimal immersion. If the Gauss curvature $K$ vanishes anywhere, then $K$  vanishes everywhere.
\end{theorem}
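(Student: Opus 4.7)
The plan is to follow the strategy of Theorem \ref{pmcurvaturamedia}: produce an elliptic differential inequality for the Gauss curvature $K$, apply the strong maximum principle to force $K\equiv 0$, and then exploit the structure of a flat locally convex $[\varphi,\vec{e}_3]$-minimal surface to conclude that $\psi(M)$ lies in a vertical plane.

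The main step is to derive a PDE for $K$. In dimension two the identities $2K = H^2 - |\CA|^2$ and the Cayley--Hamilton relation $\CA^{[2]} = H\CA - Kg$ reduce any quadratic expression in $\CA$ to terms in $H$, $K$ and the metric. Combining Simons' identity $\Delta\CA = \nabla^2 H + H\CA^{[2]} - |\CA|^2\CA$ with the tensor equations of Lemma \ref{fl} (tracing \eqref{e7} to obtain $\Delta H$ and contracting \eqref{e8} with $\CA$ to obtain $\Delta|\CA|^2$), one arrives at an equation of the form
\[
\Delta K + \langle X, \nabla K\rangle \;=\; \mathcal{R},
\]
where $X$ is an explicit tangent field built from $\nabla\varphi$, $\nabla\mu$ and $\nabla\eta$, and $\mathcal{R}$ is a zeroth-order expression involving $K$, $|\CA|^2$ and derivatives of $\varphi$ up to order three evaluated at $\mu$.

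The sign analysis comes next. Local convexity makes $\CA$ positive semi-definite, so $H\ge 0$; since $H = -\dot\varphi\,\eta$ and $\dot\varphi > 0$, we obtain $\eta\le 0$. Writing $\Delta\dot\varphi = \ddot\varphi\,\Delta\mu + \dddot\varphi\,|\nabla\mu|^2$ and tracking where $\dddot\varphi$ enters $\mathcal{R}$, the only term whose sign is not controlled by convexity alone is proportional to $\eta\,\dddot\varphi\,|\nabla\mu|^2$, which is non-negative by the hypothesis $\dddot\varphi\le 0$. The remaining contributions reorganize into a multiple of $K$ plus non-negative squares, yielding an elliptic differential inequality
\[
\Delta K + \langle X, \nabla K\rangle + c(x)\,K \;\le\; 0 \qquad\text{on } M.
\]
Since $K\ge 0$ attains the value $0$ at an interior point, the strong maximum principle (Hopf form) forces $K\equiv 0$ on $M$.

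With $K\equiv 0$ and local convexity, the kernel of $\CA$ is one-dimensional at each point, and its integral curves are Euclidean straight lines along which $N$, and hence $\eta$, is parallel. Combining $H = -\dot\varphi(\mu)\,\eta$ with the strict monotonicity of $\dot\varphi$ and the equation $\dot\varphi\,\nabla^2\mu = H\CA$ from \eqref{e3} applied to these rulings forces $\eta\equiv 0$; then $N$ is horizontal everywhere and $\psi(M)$ lies in a vertical plane. The principal obstacle is the tensorial computation in the first two paragraphs: the remainder $\mathcal{R}$ must decompose cleanly enough that $\dddot\varphi\le 0$ directly produces the needed sign.
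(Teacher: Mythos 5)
You have correctly isolated the signs that make the theorem work (local convexity gives $\CA\geq 0$, hence $\eta\leq 0$ from $H=-\dot{\varphi}\,\eta$ and $\dot{\varphi}>0$, and the dangerous third-order term is $\eta\,\dddot{\varphi}\,\langle\nabla\mu,v\rangle^{2}\geq 0$), but the analytic device you propose to exploit them --- a scalar elliptic inequality for $K$ --- has a genuine gap at its central step. Writing $2K=H^{2}-|\CA|^{2}$ and using \eqref{e7}--\eqref{e8} together with the Cayley--Hamilton identity $\CA^{[2]}=H\CA-Kg$, one finds
\begin{equation*}
\Delta K+\langle\nabla\varphi,\nabla K\rangle \;=\; c\,K\;+\;\eta\,\dddot{\varphi}\,\bigl(\CA(\nabla\mu,\nabla\mu)-H|\nabla\mu|^{2}\bigr)\;+\;\bigl(|\nabla H|^{2}-|\nabla\CA|^{2}\bigr),
\end{equation*}
with $c$ a zeroth-order coefficient. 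The first two terms behave exactly as you predict, but the last bracket is \emph{not} zeroth order: setting $\CA^{0}:=\CA-\tfrac{H}{2}g$ it equals $\tfrac12|\nabla H|^{2}-|\nabla\CA^{0}|^{2}$, and in an orthonormal frame it is $2\sum_{i}\bigl(\nabla_{i}\CA_{11}\,\nabla_{i}\CA_{22}-(\nabla_{i}\CA_{12})^{2}\bigr)$. These first derivatives of $\CA$ are not controlled by $K$, $\nabla K$ and $\varphi$; in particular at a point where $\CA$ itself vanishes one has $\nabla K=0$ while $\sum_i\nabla_{i}\CA_{11}\nabla_{i}\CA_{22}$ need not vanish and has no sign, so this term cannot be absorbed into $\langle X,\nabla K\rangle+c(x)K$. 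Hence the inequality $\Delta K+\langle X,\nabla K\rangle+c(x)K\leq 0$ that your Hopf argument requires is not established --- this is the classical obstruction to running the scalar strong maximum principle on $\det\CA$, and it is precisely the ``principal obstacle'' you flag at the end of your proposal.

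The paper's proof avoids this computation altogether: it applies Hamilton's strong maximum principle for tensors (elliptic version, \cite[Section 2]{SavasSmoczyk}) directly to \eqref{e8}, $\Delta\CA+(\nabla\CA)(\nabla\varphi,\cdot,\cdot)+{\cal G}(\CA)=0$, where only the null-eigenvector condition must be checked: if $\CA(v,\cdot)=0$, then $|\CA|^{2}\CA(v,v)=0$, ${\cal B}(v,v)=0$, and by \eqref{e3} $\nabla^{2}\mu(v,v)=\frac{H}{\dot{\varphi}}\CA(v,v)=0$, so ${\cal G}(\CA)(v,v)=\eta\,\dddot{\varphi}\,\langle\nabla\mu,v\rangle^{2}\geq 0$ --- exactly your sign observation, placed where it suffices; the troublesome gradient terms are handled inside the tensor maximum principle itself. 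If you want to keep a scalar argument, the object to work with is the smallest eigenvalue of $\CA$ (in a viscosity/support sense), not $K$. Finally, your concluding step ``$K\equiv 0$ and local convexity force $\eta\equiv 0$'' does not follow: the flat, locally convex $[\varphi,\vec{e}_3]$-grim reapers of Section \ref{s3} have $K\equiv 0$ and $\eta\not\equiv 0$, so flatness alone cannot yield the vertical plane (the paper's own proof is also silent on this last implication, stopping once $\CA$ has a null eigenvalue everywhere).
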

\begin{proof}
By hypothesis, the Gauss map $N$ can be chosen such that  $\CA$ is a positive  semi-definite bilinear form and  from \eqref{e8}, we have
\begin{align*} &\Delta \CA + (\nabla\CA)(\nabla\varphi,\, .\,,\, .\,)+{\cal G}(\CA) =0\end{align*}
where $$ {\cal G}(\CA) = \eta\nabla^2\dot{\varphi} + |\CA|^2\CA - {\cal B}.
$$
But, from Lemma \ref{fl}, if  $\dddot{\varphi}\leq0$ we obtain ${\cal G}(\CA)(v,v) = \eta \dddot{\varphi}\langle\nabla \mu,v\rangle^2\leq 0$  for each null vector $v$ of $\CA$. So, 
can apply the maximum principle of Hamilton (see \cite[Section 2]{SavasSmoczyk}) and if there is an interior point of $\Sigma$  where $\CA$ has a null-eigenvalue then $\CA$ must have a null-eigenvalue everywhere,  which concludes the proof of the theorem.
\end{proof}

\section{Complete flat $[\varphi, \vec{e}_3]$-minimal surfaces}\label{s3}

\subsection{Vertical graphs invariant by horizontal translations}
Consider the   $[\varphi, \vec{e}_3]$-minimal vertical graph given by a function $u$ which  only depend on one variable, $u=u(x)$, from \eqref{fe} $u$ must be  a solution of the following ODE:
\begin{align}
u''(x)=\dot{\varphi}(u)(1+u'(x)^{2}) \label{htrans}
\end{align}
In order to look for complete examples we will consider that 
  $$\varphi:\ ]a,\infty[\ \longrightarrow \R$$ 
 is  either a strictly increasing (or decreasing) function.  Then, by taking $z= \varphi(u)$ and  $u' = \tan(v)$, we obtain that \eqref{htrans}  is equivalent to
\begin{equation}
\left.
\begin{array}{l}
 v' = h(z),\\
 z'= h(z) \tan(v),
\end{array}
\right\}
\label{translation}
\end{equation}
where $h(z)=\dot{\varphi}(\varphi^{-1}(z))$. 

It is clear that 
$e^{z}\text{cos}(v) $ is constant 
along the solutions of \eqref{translation} and from Figure \ref{diagramafase}, for each solution $u$ of \eqref{htrans}  there exists a unique  $x_0 \in \R$ such that $v(x_0)=0$ (it is not a restriction to assume  that $x_0=0$). 

\begin{figure}[h]
\label{diagramafase}
\begin{center}
\includegraphics[width=.6\textwidth]{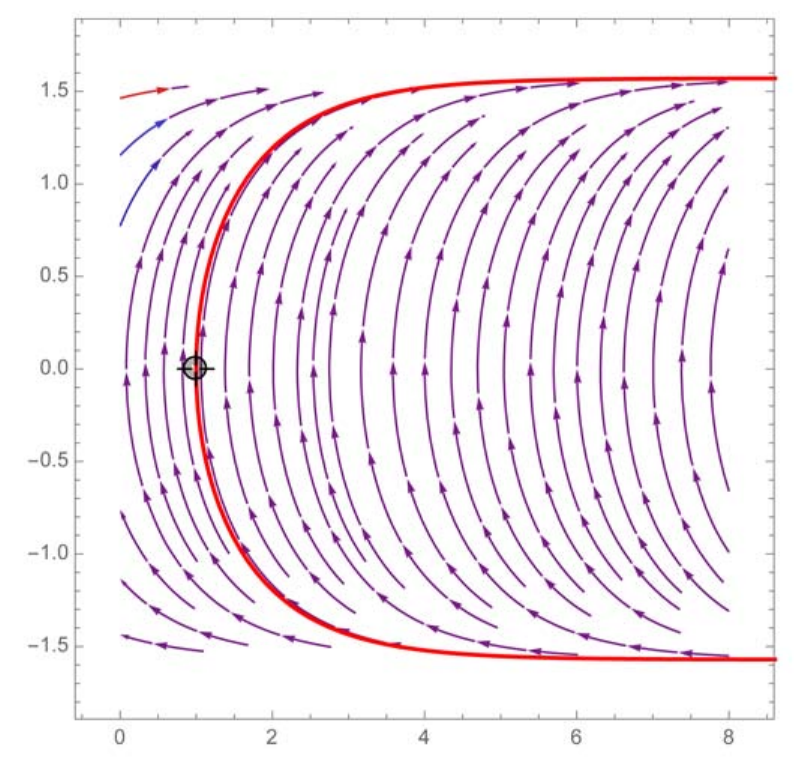}
\end{center}
\caption{Phase portrait of  \eqref{translation}}
\end{figure}

\

By taking the initial conditions 
\begin{equation}
u(0)=u_0, \quad u'(0)=0,\label{ci} 
\end{equation} 
 we have that for each $x\geq 0$,  $u(x)$ is given  by
 \begin{equation}
u(x):= (\mathcal{X}\circ \varphi)^{-1}(x), \quad \text{with}\quad \mathcal{X}(z) = \int_{z_0}^z\frac{d\tau}{|h(\tau)|\sqrt{\mathrm{e}^{2(\tau -z_0)} - 1}},\label{solution}
\end{equation}
where $z_0=\varphi(u_0)$. Thus, 
from \eqref{htrans} and \eqref{ci}, we obtain,
\begin{proposition}\label{par}
 The solution $u$  of \eqref{htrans}-\eqref{ci}   is  even and it is defined in the  interval  $]-\Lambda_{u_0},\Lambda_{u_0}[$,  where 
\begin{equation}
\Lambda_{u_0} = \lim_{u\rightarrow \infty} \int_{\varphi(u_0)}^{\varphi(u)}\frac{d\tau}{|h(\tau)|\sqrt{\mathrm{e}^{2(\tau -z_0)} - 1}}.\label{Lambda}
\end{equation}
\end{proposition}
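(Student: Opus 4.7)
The plan is to combine two ingredients: an evenness argument based on uniqueness for an autonomous ODE, and a separable-equation reduction coming from the first integral $\mathrm{e}^{z}\cos(v)$ of the planar system \eqref{translation}.

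First, I would prove the evenness of $u$. Since \eqref{htrans} has no explicit dependence on $x$, the function $\tilde{u}(x):=u(-x)$ satisfies $\tilde{u}''(x)=\dot\varphi(\tilde u)(1+\tilde u'(x)^2)$, together with $\tilde u(0)=u_0$ and $\tilde u'(0)=-u'(0)=0$. Since $\dot\varphi$ is smooth and the right-hand side of \eqref{htrans} is locally Lipschitz in $(u,u')$, the Cauchy problem \eqref{htrans}--\eqref{ci} has a unique maximal solution, so $\tilde u\equiv u$; that is, $u(-x)=u(x)$. Consequently, it suffices to describe $u$ on $[0,\Lambda_{u_0}[$.

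Next I would integrate the equation. Setting $z=\varphi(u)$ and $u'=\tan(v)$, the conservation law for \eqref{translation} gives $\mathrm{e}^{z(x)}\cos(v(x))=\mathrm{e}^{z_0}$, because at $x=0$ one has $v(0)=0$ (since $u'(0)=0$) and $z(0)=z_0=\varphi(u_0)$. Thus $\cos(v)=\mathrm{e}^{z_0-z}$, and along the branch $x\geq 0$ selected by the phase portrait in Figure \ref{diagramafase} (on which $v$ increases from $0$), one obtains
\begin{equation*}
u'(x)=\tan(v(x))=\sqrt{\mathrm{e}^{2(\varphi(u(x))-z_0)}-1}.
\end{equation*}
This is separable: after splitting variables and changing $\tau=\varphi(u)$ (using $du=d\tau/|h(\tau)|$, valid on each monotonicity branch of $\varphi$, whence the absolute value in the definition of $\mathcal{X}$), integration from $0$ to $x$ yields
\begin{equation*}
x=\int_{z_0}^{\varphi(u(x))}\frac{d\tau}{|h(\tau)|\sqrt{\mathrm{e}^{2(\tau-z_0)}-1}}=\mathcal{X}(\varphi(u(x))),
\end{equation*}
which inverts to the formula \eqref{solution}.

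Finally, I would identify the maximal interval of definition. Since $\dot\varphi(u_0)>0$ (respectively $<0$ in the decreasing case), one has $u''(0)>0$, so $u$ is strictly convex near $0$ and therefore strictly increasing on the right of $0$; along $[0,\Lambda_{u_0}[$ the map $\mathcal{X}\circ\varphi$ is a diffeomorphism onto $[0,\Lambda_{u_0}[$, and $u(x)\to\infty$ exactly as $x\to\Lambda_{u_0}$ with $\Lambda_{u_0}$ as in \eqref{Lambda}. Combined with evenness, this gives the asserted maximal interval $]-\Lambda_{u_0},\Lambda_{u_0}[$.

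The main subtlety I expect is the sign bookkeeping: on the one hand one must justify picking the positive branch $\tan(v)=+\sqrt{\mathrm{e}^{2(z-z_0)}-1}$ for $x>0$ from the phase portrait, and on the other hand one must keep $|h(\tau)|$ to cover the decreasing case of $\varphi$ so that the formula remains valid throughout. Everything else is routine separation of variables and uniqueness for an autonomous second-order ODE.
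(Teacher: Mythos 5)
Your proposal is correct and follows essentially the same route as the paper: the paper obtains the Proposition directly from the first integral $\mathrm{e}^{z}\cos(v)$ of the system \eqref{translation}, the resulting separable equation and the explicit formula \eqref{solution}, with evenness coming from uniqueness for the autonomous equation \eqref{htrans} with the symmetric data \eqref{ci}. You merely make explicit the steps (choice of branch of $\tan(v)$, the absolute value $|h|$ for the decreasing case, and the identification of the maximal interval) that the paper leaves implicit.
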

\begin{theorem}\label{3-2} If  $\varphi:\ ]a,\infty[\ \longrightarrow \R$ is a strictly increasing function, then,
\begin{itemize}
\item   $\Lambda_{u_0}<\infty$ if and only if $\int_{u_0}^\infty \mathrm{e}^{-\varphi(\lambda)} d\lambda < \infty $. So, if   $\Lambda_{\lambda_0}<\infty$ for some $\lambda_0\in]a,\infty[$,  then $\Lambda_\lambda<\infty$ for all $\lambda\in ]a,\infty[$.
\item  If   $\Lambda_{\lambda}<\infty$ and $\dot{\varphi}$ is increasing (respectively, decreasing), then $\Lambda_\lambda$ is  decreasing (respectively, increasing) in $\lambda$.
\end{itemize}
\end{theorem}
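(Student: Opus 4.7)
The plan is to simplify $\Lambda_{u_0}$ by a first change of variables, then attack the two bullets separately: the first by standard asymptotic (limit-comparison) analysis of an improper integral, and the second by a further substitution that isolates the $\lambda$-dependence inside a single monotone factor.

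\textbf{Reduction.} First I would substitute $\lambda=\varphi^{-1}(\tau)$ in \eqref{Lambda}. Since $d\tau=\dot\varphi(\lambda)\,d\lambda$ and $|h(\tau)|=\dot\varphi(\lambda)>0$, these factors cancel and one obtains the cleaner expression
\begin{equation*}
\Lambda_{u_0}=\int_{u_0}^{\infty}\frac{d\lambda}{\sqrt{\mathrm{e}^{\,2(\varphi(\lambda)-\varphi(u_0))}-1}},
\end{equation*}
with $z_0=\varphi(u_0)$. This is the form I would use throughout.

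\textbf{First bullet.} Expanding $\varphi(\lambda)-\varphi(u_0)\sim \dot\varphi(u_0)(\lambda-u_0)$ near $\lambda=u_0$ shows the integrand has an integrable $(\lambda-u_0)^{-1/2}$ singularity, so the behaviour is governed entirely by the tail $\lambda\to\infty$. Since $\varphi$ is strictly increasing, either $\varphi(\lambda)\to\infty$ or $\varphi(\lambda)\to L<\infty$; in the latter case the integrand is bounded below by a positive constant and $\mathrm{e}^{-\varphi(\lambda)}$ does not tend to $0$, so both quantities diverge simultaneously. In the former case $\sqrt{\mathrm{e}^{2(\varphi(\lambda)-\varphi(u_0))}-1}\sim \mathrm{e}^{\varphi(\lambda)-\varphi(u_0)}$, so the integrand is asymptotic to $\mathrm{e}^{\varphi(u_0)}\mathrm{e}^{-\varphi(\lambda)}$, and the limit comparison test gives $\Lambda_{u_0}<\infty$ iff $\int_{u_0}^{\infty}\mathrm{e}^{-\varphi(\lambda)}d\lambda<\infty$. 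The independence of $u_0$ then follows because changing $u_0$ only modifies the integral of $\mathrm{e}^{-\varphi}$ by its (finite) integral over a compact interval.

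\textbf{Second bullet.} On the integral for $\Lambda_\lambda$ above I would perform a further change of variable $s=\varphi(\mu)-\varphi(\lambda)$, which gives
\begin{equation*}
\Lambda_\lambda=\int_{0}^{\infty}\frac{ds}{h\bigl(s+\varphi(\lambda)\bigr)\,\sqrt{\mathrm{e}^{2s}-1}},
\end{equation*}
so $\lambda$ enters only through the factor $h(s+\varphi(\lambda))=\dot\varphi\bigl(\varphi^{-1}(s+\varphi(\lambda))\bigr)$. Since $\varphi$ (and therefore $\varphi^{-1}$) is strictly increasing, $\varphi^{-1}(s+\varphi(\lambda))$ is strictly increasing in $\lambda$ for every fixed $s\ge 0$; hence if $\dot\varphi$ is increasing (respectively decreasing) the integrand is pointwise decreasing (respectively increasing) in $\lambda$, and the monotonicity of $\Lambda_\lambda$ follows. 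The main delicate point in the whole argument is the limit-comparison step in the first bullet in the borderline case of slow growth of $\varphi$: the asymptotic ratio of integrands has to tend to the positive constant $\mathrm{e}^{\varphi(u_0)}$ without any oscillatory or logarithmic correction, which is exactly what the elementary bound $\sqrt{\mathrm{e}^{2t}-1}=\mathrm{e}^{t}\sqrt{1-\mathrm{e}^{-2t}}$ guarantees.
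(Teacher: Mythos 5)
Your proposal is correct and follows essentially the same route as the paper: the first bullet rests on the asymptotic equivalence $\sqrt{\mathrm{e}^{2(\tau-z_0)}-1}\sim \mathrm{e}^{\tau-z_0}$ together with the substitution $\tau=\varphi(\lambda)$ identifying $\Lambda_{u_0}$ with $\mathrm{e}^{\varphi(u_0)}\int_{u_0}^{\infty}\mathrm{e}^{-\varphi}$ up to an equivalent-convergence comparison, and the second bullet is the same shifted comparison the paper writes, with $\lambda$ entering only through the monotone factor $h(s+\varphi(\lambda))$. Your treatment is merely more explicit (the bounded-$\varphi$ case and the integrable endpoint singularity), which the paper leaves implicit.
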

\begin{proof}
As $$ \lim_{\tau\rightarrow \infty}\frac{\sqrt{\mathrm{e}^{2(\tau -z_0)} - 1}}{\mathrm{e}^{\tau -z_0}} = 1\neq 0,$$  the first item follows from \eqref{Lambda}.

On the other hand, by assuming that $\dot{\varphi}$ is increasing  and  $\Lambda_{\lambda}<\infty$ for all $\lambda\in]a,\infty[$, we have from \eqref{Lambda}, that, if   $\lambda_1\leq \lambda_2$, 
$$  \Lambda_{\lambda_1}\geq \Lambda_{\lambda_2} + \lim_{z\rightarrow \infty} \int_{z-\varphi(\lambda_2)}^{z-\varphi(\lambda_1)} \frac{d\tau}{h(\tau + \varphi(\lambda_1))\sqrt{\mathrm{e}^{2\tau} - 1}} = \Lambda_{\lambda_2}.$$
A similar discussion can be done when $\dot{\varphi}$ is decreasing.
\end{proof}

From \eqref{htrans}, \eqref{translation}, \eqref{ci},  \eqref{solution},\eqref{Lambda} and Theorem \ref{3-2},  we can prove the following properties  of the solutions,
\begin{theorem}\label{t2} Let $\varphi:\ ]a,\infty[\ \longrightarrow \ ]b,c[$,  $ a,b\in \R\cup\{-\infty\}$,  $c\in  \R\cup\{\infty\}$ be a strictly increasing diffeomorphism, then the solution $u$  of \eqref{htrans}-\eqref{ci} is defined in $]-\Lambda_{u_0},\Lambda_{u_0}[$, $\Lambda_{u_0} \in \{\R^+,\infty\}$, it is convex,  symmetric about the $y$-axis and has a minimum at $x=0$. Moreover, 
\begin{itemize}
\item if $c<\infty$, then $\Lambda_{u_0}=\infty$ and,   $$ \lim_{x\rightarrow \pm\infty} u(x)=\infty, \quad
 \lim_{x\rightarrow \pm\infty} u'(x) = \pm\sqrt{\mathrm{e}^{2(c-z_0)}-1}.$$
 \item if $c=\infty$, 
 $$ \lim_{x\rightarrow \pm\Lambda_{u_0}} u(x)=\infty, \quad
 \lim_{x\rightarrow \pm\Lambda_{u_0}} u'(x) = \pm\infty.$$
In particular, if $\Lambda_{u_0}<\infty$, the graph of $u$ is asymptotic to two vertical lines.
\end{itemize} 
\end{theorem}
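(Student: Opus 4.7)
My approach is to build on Proposition \ref{par} (maximal interval $]-\Lambda_{u_{0}},\Lambda_{u_{0}}[$ and evenness of $u$) and Theorem \ref{3-2} (finiteness criterion for $\Lambda_{u_{0}}$), and to extract every remaining qualitative conclusion from the first integral of \eqref{translation}. Explicitly, since $e^{z}\cos v$ is conserved along trajectories and the initial condition \eqref{ci} gives $v(0)=0$, $z(0)=z_{0}$, one obtains
\[
u'(x)^{2}=\mathrm{e}^{2(\varphi(u(x))-z_{0})}-1\qquad\text{for all } x\in[0,\Lambda_{u_{0}}),
\]
an identity I will exploit repeatedly. Convexity is immediate from \eqref{htrans} and $\dot{\varphi}>0$: the right hand side is positive, so $u''>0$ everywhere, hence $u$ is strictly convex, $u'$ strictly increasing, and $u'(0)=0$ forces $x=0$ to be the unique minimum with $u'(x)>0$ on $(0,\Lambda_{u_{0}})$. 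Symmetry and the shape of the interval of definition follow from Proposition \ref{par}.

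In the case $c<\infty$, the first integral immediately bounds $u'$ above by $M:=\sqrt{\mathrm{e}^{2(c-z_{0})}-1}$. A bounded derivative rules out finite-time blow-up, so standard ODE extension theory gives $\Lambda_{u_{0}}=\infty$. Being strictly increasing and bounded above, $u'(x)$ tends to some $L\leq M$ as $x\to\infty$. A short contradiction shows $u(x)\to\infty$: if $u$ had a finite limit $u_{\infty}$, then passing to the limit in \eqref{htrans} would give $u''(x)\to\dot{\varphi}(u_{\infty})(1+L^{2})>0$, incompatible with $u'\to L<\infty$. Feeding $u\to\infty$ back into the first integral yields $\varphi(u(x))\to c$ and therefore $L=M$, which is the prescribed limit of $u'$.

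In the case $c=\infty$, separating variables in the first integral produces the quadrature
\[
x=\int_{u_{0}}^{u(x)}\frac{d\tau}{\sqrt{\mathrm{e}^{2(\varphi(\tau)-z_{0})}-1}},
\]
which is precisely \eqref{solution} rewritten with $u$ as the free variable; its limit as $u\to\infty$ is $\Lambda_{u_{0}}$, in accordance with \eqref{Lambda}. As $x\to\Lambda_{u_{0}}^{-}$, the solution $(u,u')$ must leave every compact subset of the phase space, else one could extend past $\Lambda_{u_{0}}$; since $u$ is increasing, this forces $u(x)\to\infty$, and the first integral then gives $u'(x)\to\infty$. When $\Lambda_{u_{0}}<\infty$, this produces vertical asymptotes at $x=\pm\Lambda_{u_{0}}$ by the even symmetry of $u$.

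The main obstacle I anticipate is the boundary-behavior bookkeeping: in both cases one has to verify that $u$ genuinely escapes to $+\infty$ at the endpoint of its maximal interval rather than settling at an interior level of $]a,\infty[$. This is straightforward when $c=\infty$ and $\Lambda_{u_{0}}<\infty$, via the standard escape-from-compacta criterion, but the case $c<\infty$ is more delicate precisely because $u'$ remains bounded and the usual blow-up criterion does not apply directly; the convexity-based contradiction outlined above is the key device that replaces it.
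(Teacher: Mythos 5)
Your proposal is correct and follows essentially the same route as the paper, whose (largely omitted) argument rests on exactly the ingredients you use: the first integral $e^{z}\cos v=\mathrm{const}$ giving $u'^{2}=\mathrm{e}^{2(\varphi(u)-z_{0})}-1$, the quadrature \eqref{solution}--\eqref{Lambda} together with Theorem \ref{3-2}, convexity from $\dot{\varphi}>0$, and the evenness from Proposition \ref{par}. Only a cosmetic point: in the case $c=\infty$ with $\Lambda_{u_{0}}=\infty$ the ``escape from compacta'' phrasing does not literally apply, but the convexity argument you already use when $c<\infty$ (namely $u'$ strictly increasing and positive, so $u(x)\to\infty$, hence $\varphi(u)\to c$ and $u'\to\infty$ by the first integral) covers that subcase.
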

\begin{theorem}\label{t3}
 Let $\varphi:\ ]a,\infty[\ \longrightarrow \ ]b,c[$,  $ a,b\in \{\R,-\infty\}$,  $c\in \{\R,\infty\}$ be a strictly decreasing diffeomorphism, then the solution $u$  of \eqref{htrans}-\eqref{ci} is defined in $]-\Lambda_{u_0},\Lambda_{u_0}[$, $\Lambda_{u_0} \in \{\R^+,\infty\}$, it is concave,  symmetric about the $y$-axis and has a maximum at $x=0$. Moreover, 
\begin{itemize}
\item if $c<\infty$, then 
$\Lambda_{u_0}<\infty $
  and,   $$ \lim_{x\rightarrow \pm\Lambda_{u_0}} u(x)=a, \quad
 \lim_{x\rightarrow \pm\Lambda_{u_0}} u'(x) = \pm\sqrt{\mathrm{e}^{2(c-z_0)}-1}.$$
\item if $c=\infty$, then 
$$\Lambda_{u_0}<\infty \iff \int_a^{u_0}\mathrm{e}^{-\varphi(\lambda)} d\lambda < \infty, $$
  and,   $$ \lim_{x\rightarrow \pm\Lambda_{u_0}} u(x)=a, \quad
 \lim_{x\rightarrow \pm\Lambda_{u_0}} u'(x) = \pm\infty.$$
\end{itemize}
\end{theorem}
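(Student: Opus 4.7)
The plan is to mirror the proof strategy one would use for Theorem \ref{t2}, exploiting that the only structural change when $\varphi$ is decreasing is the sign of $\dot\varphi$. First, since $\dot\varphi(u)<0$ on $]a,\infty[$, equation \eqref{htrans} gives $u''<0$ wherever the solution is defined, so $u$ is strictly concave. Combined with $u'(0)=0$, this forces a strict global maximum at $x=0$ and strict monotonic decrease of $u$ on $]0,\Lambda_{u_0}[$. The symmetry $u(-x)=u(x)$ is free from ODE uniqueness for \eqref{htrans}--\eqref{ci}, since $\tilde u(x):=u(-x)$ solves the same IVP.

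Next I would use the conservation law $\mathrm{e}^{z}\cos v\equiv \mathrm{e}^{z_0}$ along trajectories of \eqref{translation} already isolated in the excerpt. Translating back via $z=\varphi(u)$ and $\tan v=u'$ yields the first integral
\[
u'(x)^{2}=\mathrm{e}^{2(\varphi(u(x))-z_0)}-1,
\]
and because $u$ decreases while $\varphi$ decreases, $\varphi(u(x))\geq z_0$, so the right hand side is nonnegative. Choosing the correct sign gives $u'(x)=-\sqrt{\mathrm{e}^{2(\varphi(u)-z_0)}-1}$ on $]0,\Lambda_{u_0}[$; separation of variables and integration from $0$ to the maximal $x$ then produces the analogue of \eqref{Lambda} in this setting,
\[
\Lambda_{u_0}=\int_{a}^{u_0}\frac{d\lambda}{\sqrt{\mathrm{e}^{2(\varphi(\lambda)-z_0)}-1}},
\]
and $u(x)\to a$ as $x\to\Lambda_{u_0}^{-}$ once the solution is extended to its maximal interval.

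The substance of the theorem is then the dichotomy in $c$. If $c<\infty$, the integrand has the finite limit $(\mathrm{e}^{2(c-z_0)}-1)^{-1/2}$ as $\lambda\to a^{+}$, so the only singularity of the integrand is the integrable square-root type at $\lambda=u_0$; hence $\Lambda_{u_0}<\infty$, and $u'(x)^{2}\to \mathrm{e}^{2(c-z_0)}-1$ gives the asserted slope limit. If $c=\infty$, then $\varphi(\lambda)\to\infty$ as $\lambda\to a^{+}$, and the elementary asymptotic $(\mathrm{e}^{2s}-1)^{-1/2}\sim \mathrm{e}^{-s}$ for $s\to\infty$ yields $(\mathrm{e}^{2(\varphi(\lambda)-z_0)}-1)^{-1/2}\sim \mathrm{e}^{z_0}\mathrm{e}^{-\varphi(\lambda)}$; comparison thus shows convergence of $\Lambda_{u_0}$ is equivalent to convergence of $\int_a^{u_0}\mathrm{e}^{-\varphi(\lambda)}\,d\lambda$, matching the stated criterion, and $u'(x)\to-\infty$ as $x\to\Lambda_{u_0}^{-}$.

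The only real obstacle is the analytic bookkeeping of the integrand at the two endpoints $\lambda=u_0$ and $\lambda=a$: verifying that the square-root singularity at $\lambda=u_0$ is always integrable, and justifying the asymptotic comparison at $\lambda=a$ uniformly in the relevant case. Both reduce to routine estimates for $(\mathrm{e}^{2s}-1)^{-1/2}$ near $s=0$ and $s=\infty$. The limits at $x=-\Lambda_{u_0}$ and the $\pm$ sign in the slope follow automatically from the evenness of $u$ established in the first step.
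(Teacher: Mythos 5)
Your proposal is correct in its main line and follows essentially the route the paper intends: the paper gives no separate proof of this theorem, only the remark that it follows from \eqref{htrans}, \eqref{translation}, \eqref{ci}, \eqref{solution}, \eqref{Lambda} and Theorem \ref{3-2}, and your steps (concavity and evenness from the IVP, the conserved quantity $\mathrm{e}^{z}\cos v$ giving $u'^{2}=\mathrm{e}^{2(\varphi(u)-z_0)}-1$, separation of variables to get $\Lambda_{u_0}=\int_a^{u_0}\bigl(\mathrm{e}^{2(\varphi(\lambda)-z_0)}-1\bigr)^{-1/2}d\lambda$, and the comparison $(\mathrm{e}^{2s}-1)^{-1/2}\sim \mathrm{e}^{-s}$, which is exactly the first item of Theorem \ref{3-2} transplanted to the endpoint $\lambda=a$) are precisely that argument. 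Two caveats. First, in the bullet $c<\infty$ your inference ``integrand bounded near $a$, hence $\Lambda_{u_0}<\infty$'' tacitly assumes $a\in\R$: if $a=-\infty$ and $c<\infty$ the integrand tends to the positive constant $(\mathrm{e}^{2(c-z_0)}-1)^{-1/2}$ on an unbounded interval, so $\Lambda_{u_0}=\infty$ and the solution is entire with asymptotically linear ends; this is really an imprecision of the statement itself (the Remark following the theorem treats $a=-\infty$ by replacing $\varphi$ with $-\varphi$ and invoking Theorem \ref{t2}), but you should make the restriction $a\in\R$ explicit there rather than prove it silently. Second, since $u'<0$ for $x>0$, the boundary slopes are $\mp\sqrt{\mathrm{e}^{2(c-z_0)}-1}$ (respectively $\mp\infty$) as $x\to\pm\Lambda_{u_0}$; your evenness argument yields the correct signs, and the $\pm$ in the statement should be read accordingly. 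The steps you defer --- that $u\to a$ at the ends of the maximal interval (concavity plus the standard prolongation argument rules out an interior limit $\ell>a$) and the integrability of the square-root singularity at $\lambda=u_0$ --- are routine and close the argument.
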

\begin{remark}{\rm 
In the hypothesis  of Theorem \ref{t3}, the graph of $u$ is complete when $a=-\infty$. But in this case, by changing $\varphi$ by $-\varphi$, we can also apply Theorem \ref{t2}.} 
\end{remark}
\begin{definition} {\rm For each  solution $u$ of \eqref{htrans}-\eqref{ci}  we refer 
$ {\cal C}:= \text{Graph}(u) \times \R$
as a {\sl {\bf $[\varphi,\vec{e}_3]$-catenary cylinder}} surface.}
\end{definition}

\begin{figure}[h]
\begin{center}
\includegraphics[width=0.3\linewidth]{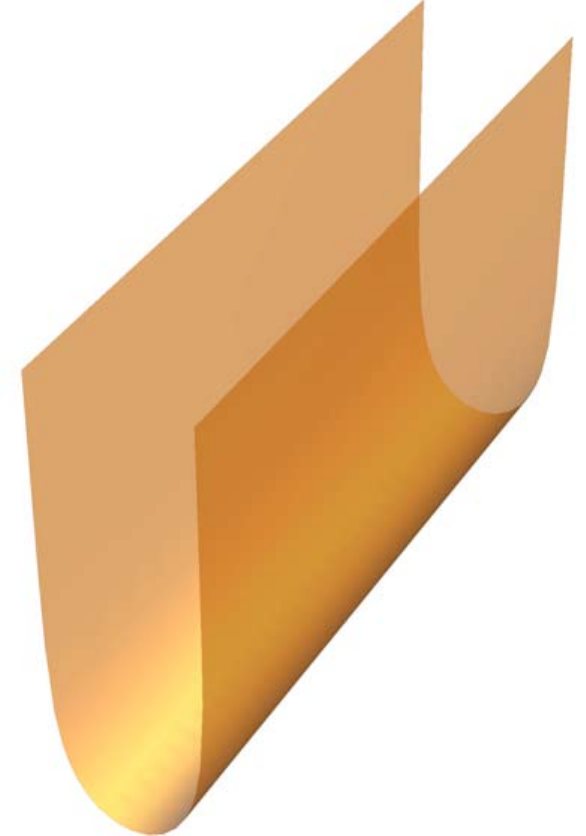}\qquad
\includegraphics[width=0.5\linewidth]{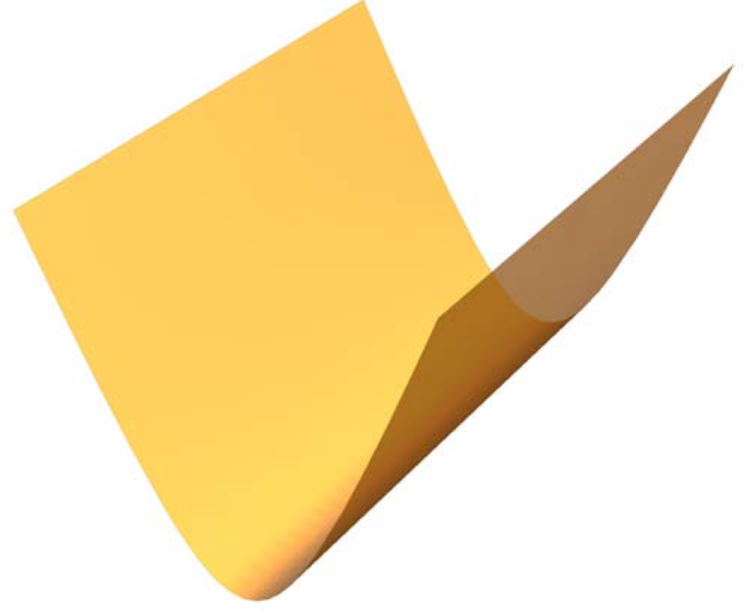}
\end{center}
\caption{$[\varphi,\vec{e}_{3}]$-catenary cylinders with $\dot{\varphi}=1$ and $\dot{\varphi}=1/u^2$, respectively.}
\end{figure}

\subsection{Tilted $[\varphi,\vec{e}_3]$-catenary cylinders}
 Let $\psi:=(x,y,u(x))$, $x\in]-\Lambda_{u_0},\Lambda_{u_0}[$  be a $[\varphi,\vec{e}_3]$-catenary cylinder with $u$ satisfying \eqref{ci} and Gauss map,
 $$ N = \frac{1}{\sqrt{1+u'^2}}(u',0,-1). $$
If we rotate the surface by an angle $\theta\in ]0,\pi/2[$ about the $x$-axis and dilate by $1/\cos\theta$, the resulting surface may be written as follows,
$$\widetilde{\psi}=\psi+\frac{1-\cos\theta}{\cos\theta}\langle \psi,\vec{e}_1\rangle \vec{e}_1+(\tan\theta)\vec{e}_1\wedge\psi,$$
where $\vec{e}_1=(1,0,0)$ and 
whose Gauss map  is given by,
\begin{equation}\widetilde{N}=\cos\theta \ N+(1-\cos\theta)\langle N,\vec{e}_1\rangle \vec{e}_1+\sin \theta \ \vec{e}_1\wedge N.\label{ntg}\end{equation}
The mean curvature $\widetilde{H}$ of $\widetilde{\psi}$  verifies
$$\widetilde{H}=\cos\theta \ H=-\cos \theta \ \dot{\varphi} \langle\vec{e}_3, N\rangle=-\dot{\varphi} \langle\vec{e}_3,\widetilde{N}\rangle.$$
Consequently, $\widetilde{\psi}$ is also $[\varphi,\vec{e}_3]$-minimal and we are going to refer these examples as  {\sl {\bf tilted $[\varphi,\vec{e}_3]$-catenary cylinders}}.
\\

Observe that,
\begin{equation}\widetilde{\psi}(x,y):=\left(\frac{x}{\cos \theta},y-u(x)\tan \theta,u(x)+y\tan \theta \right),\label{itg}\end{equation}
and it is the graph of the function 
\begin{align}
& {\cal C}_\theta: \  ]-\frac{\Lambda_{u_0}}{\cos \theta},\frac{\Lambda_{u_0}}{\cos \theta}[ \times \R \longrightarrow \R \nonumber\\
&{\cal C}_\theta (x,y) =\frac{u(x \cos \theta)}{\cos^2\theta} + y \tan\theta\nonumber
\end{align}

\begin{figure}[htb]
\begin{center}
\includegraphics[width=0.3\linewidth]{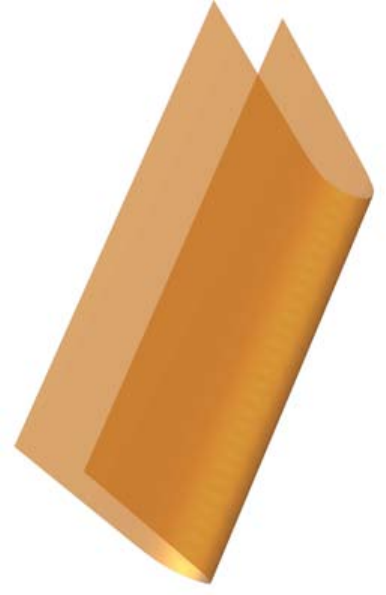}\qquad
\includegraphics[width=0.32\linewidth]{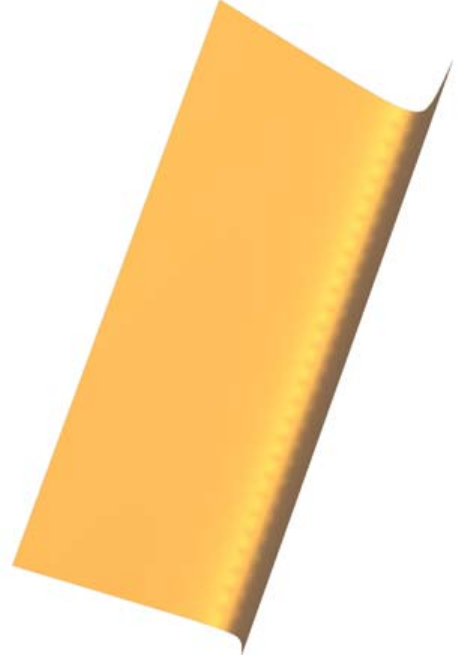}
\end{center}
\caption{Titled $[\varphi,\vec{e}_{3}]$-catenary cylinders with $\dot{\varphi}=1$ and $\dot{\varphi}=1/u^3$, respectively.}
\end{figure}

\begin{theorem}
Let $\Sigma\subset \R^3$ be  a complete flat  $[\varphi,\vec{e}_3]$-minimal surface. If $\varphi:\R\rightarrow \R$ is a strictly increasing diffeomorphism, then $\Sigma$ is either a vertical plane or a $[\varphi,\vec{e}_3]$-catenary cylinder (maybe tilted) surface. 
\end{theorem}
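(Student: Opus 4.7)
My proposal combines the classical structure theory for complete flat surfaces in $\R^{3}$ with the rigidity that the $[\varphi,\vec{e}_{3}]$-minimal equation imposes on a cylinder. The first move is to invoke the Hartman--Nirenberg theorem, which guarantees that a complete embedded flat surface in $\R^{3}$ is a cylinder $\Sigma=\gamma+\R\vec{v}$ over a complete simple plane curve $\gamma$ lying in a plane orthogonal to a fixed unit vector $\vec{v}$. Since $\varphi$ depends only on the third coordinate, any rotation around the $\vec{e}_{3}$-axis preserves the $[\varphi,\vec{e}_{3}]$-minimal equation; I will use this to assume $\vec{v}=\cos\theta\,\vec{e}_{2}+\sin\theta\,\vec{e}_{3}$ for a unique $\theta\in[0,\pi/2]$ and then split into cases.

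Next, I will exploit that on a cylinder both the mean curvature $H$ and the angle function $\eta=\langle N,\vec{e}_{3}\rangle$ are constant along each ruling, while the height satisfies $\mu(t)=\mu_{0}+t\sin\theta$. Inserting this into \eqref{me} gives
\begin{equation*}
H=-\dot{\varphi}(\mu_{0}+t\sin\theta)\,\eta \qquad \text{for every } t\in\R.
\end{equation*}
If $\sin\theta\neq 0$, then on any ruling on which $\eta\neq 0$ the function $\dot{\varphi}$ must be constant on all of $\R$, forcing $\varphi$ to be affine. The possible configurations therefore reduce to: (a) $\eta\equiv 0$; (b) $\theta=0$ (horizontal ruling); or (c) $\theta\in(0,\pi/2)$ with $\varphi$ affine.

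In case (a), $N$ is horizontal everywhere, so $\Sigma$ is contained in a vertical plane and, being complete and embedded, must coincide with it. In case (b), $\gamma$ is a complete simple curve in the $xz$-plane; parametrising it by arclength with tangent angle $\alpha$, the minimality condition reduces to $\alpha'(s)=\dot{\varphi}(z(s))\cos\alpha(s)$, and a direct differentiation shows that $F(s):=\mathrm{e}^{\varphi(z(s))}\cos\alpha(s)$ is a first integral. When $F\equiv 0$ the curve is a vertical line and $\Sigma$ is again a vertical plane; otherwise $\cos\alpha$ never vanishes, $\gamma$ is globally the graph of a convex function $u$ solving \eqref{htrans}, and Theorem \ref{t2} together with embeddedness and completeness identifies $\Sigma$ with a $[\varphi,\vec{e}_{3}]$-grim reaper. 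In case (c), the same planar analysis applied in the plane orthogonal to $\vec{v}$, combined with the explicit description \eqref{itg}, places $\Sigma$ among the tilted $[\varphi,\vec{e}_{3}]$-grim reapers.

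I expect the two most delicate points to be: first, verifying the hypotheses of the Hartman--Nirenberg theorem for our $\Sigma$ and extracting the \emph{simple} plane curve $\gamma$ from the embeddedness of $\Sigma$; and second, showing in the horizontal-ruling case that $\gamma$ is globally the graph of a solution of \eqref{htrans} on its maximal interval $]-\Lambda_{u_{0}},\Lambda_{u_{0}}[$. The first integral $F$ rules out vertical tangents in the interior, but the asymptotic behaviour in Theorem \ref{t2} is needed to identify the two ends of $\gamma$ with the vertical asymptotes of the grim reaper (when $\Lambda_{u_{0}}<\infty$) or with infinity (when $\Lambda_{u_{0}}=\infty$).
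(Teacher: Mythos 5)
Your proposal is correct in substance, and it takes a genuinely different route from the paper. The paper also begins with the cylinder structure of a complete flat embedded surface, but its key step is a tangency argument: for any ruling $\CL$ whose normal is not horizontal it produces a $[\varphi,\vec{e}_3]$-grim reaper (tilted when $\CL$ is not horizontal) containing $\CL$ and tangent to $\Sigma$ along $\CL$, and it then concludes by uniqueness for the ODE \eqref{htrans}. You instead read off structural constraints directly from the cylinder: $H$ and $\eta$ are constant along each ruling while $\mu$ is affine in the ruling parameter, so \eqref{me} forces $\eta\equiv 0$, or horizontal rulings, or $\dot{\varphi}$ constant on $\R$; the remaining work is the planar ODE with the first integral $e^{\varphi(z)}\cos\alpha$, which is exactly the conserved quantity the paper uses for \eqref{translation}. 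What your route buys is accuracy about the tilted alternative: your case analysis shows a genuinely tilted, non-planar example can occur only when $\varphi$ is affine, and this is in fact correct. Substituting the graph $\CT_\theta$ into \eqref{fe} gives a left-hand side independent of $y$ but a right-hand side $\dot{\varphi}\left(\frac{u(x\cos\theta)}{\cos^{2}\theta}+y\tan\theta\right)\frac{1+u'(x\cos\theta)^{2}}{\cos^{2}\theta}$, so the tilted surfaces of Section 3.2 satisfy \eqref{fe} only when $\dot{\varphi}$ is constant (in the paper's computation $\widetilde{H}=-\dot{\varphi}\langle\vec{e}_3,\widetilde{N}\rangle$, the factor $\dot{\varphi}$ is evaluated at the height of the point before rotating and dilating). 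The paper's proof of the theorem leans on that construction along non-horizontal rulings, whereas yours does not; your sharper conclusion still implies the statement as written, with the tilted case arising only for affine $\varphi$.

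Two details to tighten when you flesh this out. In case (a), a horizontal Gauss map plus flatness does not by itself give a plane (any vertical cylinder over a plane curve is flat with $\eta\equiv0$); you need \eqref{me} to conclude $H\equiv0$, and then $H\equiv0$ together with $K\equiv0$ makes $\Sigma$ totally geodesic, hence a vertical plane — this also absorbs the ruling direction $\vec{v}=\vec{e}_3$, which your cases (b)--(c) do not cover. In case (b), after excluding vertical tangents you still need that the maximal solution underlying $\gamma$ attains a critical point, so that $\gamma$ is a horizontal translate of a profile from \eqref{htrans}--\eqref{ci}: the convexity of solutions of \eqref{htrans} together with the identity $e^{\varphi(u)}=c\sqrt{1+u'^{2}}$ bounds $u$ and $u'$ on any maximal interval where $u'<0$ and yields a contradiction with maximality, which is the short argument the paper replaces by its appeal to the phase portrait of \eqref{translation}.
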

\begin{proof}
From basic differential geometry, $\Sigma=\alpha \times \Pi^\perp$ is a ruled surface and its Gauss map is constant along the rules, where $\alpha$ is a complete regular curve in a plane $\Pi\subset \R^3$.
\begin{quote}{\sc Claim:}
Let ${\cal L}$ be a straight line of  $\Sigma$  and  ${\cal V}_L$ be the unit normal vector along ${\cal L}$. If $\langle{\cal V}_L,\vec{e}_3\rangle\neq 0,$ then there exists a $[\varphi,\vec{e}_3]$-catenary cylinder $\cal C_{\cal L}$ (tilted, if ${\cal L}$ is not horizontal) containing ${\cal L}$ and tangent to $\Sigma$ along ${\cal L}$.
\end{quote}
Then, up to  an appropriate  rotation and dilatation, $\Sigma$ is tangent to a $[\varphi,\vec{e}_3]$-catenary cylinder along a rule.  The result follows from standard theory of uniqueness of solution for the ODE \eqref{htrans}.
\end{proof}
\begin{proof}[Proof of the claim]
If $\CL$ is horizontal then, after a rotation about the axis $\vec{e}_3$, we may assume that 
$$ \CL = \{(x_0,0,u_0) + s (0,1,0) \ | \ s\in \R\}$$
and there exists $\phi\in]-\pi/2,\pi/2[$ such that $\CV_{\CL}= (-\sin \phi, 0, \cos \phi)$. Then, as $\varphi:\R\rightarrow\R$ is a strictly increasing diffeomorphism, from \eqref{diagramafase}, there exists a solution $u_{\CL}$ of  \eqref{htrans}-\eqref{ci} and $x_1\in \R$, such that $u_{\CL}(x_1)=u_0$ and $u_{\CL}'(x_1)=\tan \phi$. The $[\varphi,\vec{e}_3]$-catenary cylinder we are looking for is just a translation in the $\vec{e}_1$-axis of the catenary cylinder ${\cal C}_{u_{\CL}}$ associated to $u_{\CL}$.
\\

If $\CL$  is not horizontal and $p=\CL\cap \{z=0\}$, then by rotation of center $p$ and axis $\vec{e}_3$ we may assume there exists $\theta\in]-\pi/2,0[$ and $\alpha \in \R$, such that 
$$\CV_{\CL} = \frac{1}{\sqrt{\alpha^2 + 1}}(-\alpha,-\sin \theta, \cos \theta).$$
So, from \eqref{ntg} and \eqref{itg}, if we take the solution $u_{\CL}$ of  \eqref{htrans}-\eqref{ci} satisfying 
$$ u_{\CL}(x_1) = \langle p,\vec{e}_2\rangle \ \cos \theta \ \sin \theta, \qquad  u_{\CL}'(x_1)=\alpha,$$
for some $x_1\in \R$, we conclude that our tilted $[\varphi,\vec{e}_3]$-catenary cylinder is a translation in the $\vec{e}_1$-axis of the tilted $[\varphi,\vec{e}_3]$-catenary cylinder obtained after rotation of angle $\theta$ around the $\vec{e}_2$-axis and dilation of $1/\cos \theta$ the $[\varphi,\vec{e}_3]$-catenary cylinder associated to $u_{\CL}$.
\end{proof}

As consequence, from the Theorem \ref{pmcurvaturagauss}, the following result holds,

\begin{corollary}
Let $\varphi:]a,b[\rightarrow\mathbb{R}$ be  a strictly increasing  function satisfying  $\dddot{\varphi}\leq0$, 
and let $\Sigma$  be a complete locally convex $[\varphi,\vec{e}_3]$-minimal immersion in $\mathbb{R}^{2}\times ]a,b[$. If the Gauss curvature $K$ vanishes anywhere, then $\Sigma$ is either a vertical plane or a $[\varphi,\vec{e}_{3}]$-catenary cylinder (maybe tilted) surface.
\end{corollary}

\section{ $[\varphi,\vec{e}_3]$-minimal surfaces of revolution}
In this section we are going to study geometric behaviour of rotationally symmetric solutions of \eqref{fe}.   
\subsection{The singular case}
In the rotationally symmetric case, the equation \eqref{fe} reduces to the following ordinary differential equation for $u=u(r)$, $r=\sqrt{x^2+y^2}$:
\begin{equation}
\label{equationrot}
u^\pp=\left(1+u^{\p 2} \right)\left(\dot{\varphi}(u)-\frac{u^\p}{r} \right), 
\end{equation}
where $(^\p)$ denotes derivative respect to $r$ and $\varphi:]a,b[\subseteq \R\longrightarrow \R$ is a smooth function.  Since \eqref{equationrot} is degenerated,  the existence and uniqueness of solution at $r=0$ is not assured by standard theory. Multiplying by $r$ we obtain that  \eqref{equationrot} also writes as,
\begin{equation}
\label{eqrot}
\left(\frac{r \ u^\p}{\sqrt{1+u^{\p 2}}} \right)^\p = \frac{r \dot{\varphi}(u)}{\sqrt{1+u^{\p2}}}.
\end{equation}
But, from \cite[Theorem 2]{Serrin},  a solution of \eqref{fe}  cannot possess isolated non-removable singularities, hence, it is not a restriction  to look for the existence of solutions of \eqref{eqrot} with the following initial conditions: 
\begin{equation}\label{ic}
 u(0) = u_0\in]a,b[, \qquad u^\p(0) =0.
 \end{equation}
In this sense and  by using a similar argument to  \cite[Proposition 2]{Rafa} we can assert 
 \begin{proposition}\label{epro}
The problem \eqref{equationrot}-\eqref{ic} has a unique solution $u\in {\cal C}^2([0,R])$ for some $R>0$ which depends continuously on the initial data and such that 
 $$ u^\pp(0) = \frac{\dot{\varphi}(u_0)}{2}.$$
 \end{proposition}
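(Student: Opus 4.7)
The strategy is to bypass the $r=0$ singularity of \eqref{equationrot} by working with the equivalent divergence form \eqref{eqrot}. Integrating \eqref{eqrot} from $0$ to $r$ and using $u'(0)=0$ yields the first-order integral identity
\begin{equation*}
\frac{r\,u'(r)}{\sqrt{1+u'(r)^{2}}}=\int_{0}^{r}\frac{s\,\dot{\varphi}(u(s))}{\sqrt{1+u'(s)^{2}}}\,ds .
\end{equation*}
Introducing the auxiliary unknown $v:=u'/\sqrt{1+u'^{\,2}}\in(-1,1)$ (so that $u'=v/\sqrt{1-v^{2}}$), this identity together with $u(r)=u_{0}+\int_{0}^{r}u'(s)\,ds$ converts the Cauchy problem \eqref{equationrot}--\eqref{ic} into the coupled system
\begin{equation*}
v(r)=\frac{1}{r}\int_{0}^{r}s\,\dot{\varphi}(u(s))\sqrt{1-v(s)^{2}}\,ds,\qquad u(r)=u_{0}+\int_{0}^{r}\frac{v(s)}{\sqrt{1-v(s)^{2}}}\,ds .
\end{equation*}
Note that the right-hand side of the first equation extends continuously to $r=0$ with value $0$, which is precisely what cures the apparent singularity.

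Next I would set up the contraction. On the Banach space $X_{R}:=C^{0}([0,R])\times C^{0}([0,R])$ with the sup norm, consider the closed ball $B$ of pairs $(u,v)$ with $\|u-u_{0}\|_{\infty}\le\delta$ and $\|v\|_{\infty}\le 1/2$, where $\delta>0$ is chosen so that $[u_{0}-\delta,u_{0}+\delta]\subset\,]a,b[$. Define $\Phi(u,v)=(\tilde{u},\tilde{v})$ by the right-hand sides above. Since $\dot{\varphi}$ is smooth, it is bounded and Lipschitz on $[u_{0}-\delta,u_{0}+\delta]$, and a direct estimate shows that for $R>0$ small enough (depending only on $\delta$ and on $\|\dot{\varphi}\|_{C^{1}}$ there) the map $\Phi$ sends $B$ into itself and is a contraction. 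The Banach fixed point theorem yields a unique continuous fixed point $(u,v)$, and a routine bootstrap (the formula for $\tilde{v}$ is actually $C^{1}$ in $r$, hence $u'=v/\sqrt{1-v^{2}}\in C^{1}$, hence $u\in C^{2}$) shows that $u\in C^{2}([0,R])$ and solves \eqref{equationrot} on $(0,R]$.

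Continuous dependence on $u_{0}$ is built into the contraction argument: since $\Phi$ depends continuously on the parameter $u_{0}$ and the contraction constant can be taken uniform on compact subsets of $\,]a,b[$, the fixed point $(u,v)$ depends continuously on $u_{0}$ in $X_{R}$, hence $u\in C^{2}([0,R])$ depends continuously on $u_{0}$ by the same bootstrap. Finally, the value of $u''(0)$ is read off directly from \eqref{equationrot}: since $u'(0)=0$ and $u\in C^{2}$ at the origin, L'H\^{o}pital gives $\lim_{r\to 0^{+}}u'(r)/r=u''(0)$, and passing to the limit in \eqref{equationrot} yields $u''(0)=\dot{\varphi}(u_{0})-u''(0)$, i.e.\ $u''(0)=\dot{\varphi}(u_{0})/2$.

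The main obstacle I expect is handling the singular factor $1/r$ in the definition of $\tilde{v}$: one must verify that the map $r\mapsto r^{-1}\int_{0}^{r}s\,\dot{\varphi}(u(s))\sqrt{1-v(s)^{2}}\,ds$ is genuinely continuous at $0$ (with value $0$) and that the sup-norm estimates leading to the contraction are uniform down to $r=0$; both facts follow from the factor $s$ inside the integral, which gives an $O(r)$ bound on the integrand after division by $r$. Once this is secured, the rest is standard ODE machinery as in \cite[Proposition 2]{Rafa}.
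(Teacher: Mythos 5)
Your argument is correct and is essentially the approach the paper relies on: the paper gives no details and simply invokes the argument of \cite[Proposition 2]{Rafa}, which is exactly this scheme of integrating the divergence form \eqref{eqrot} to kill the $1/r$ singularity, running a Banach fixed-point argument on the resulting integral system, and reading off $u''(0)=\dot{\varphi}(u_0)/2$ from the equation at $r=0$. Your write-up fills in the details of that cited argument (self-map, contraction, bootstrap to $C^2$, uniform contraction for continuous dependence) in a sound way, so nothing further is needed.
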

 The following result allows us to compare rotational symmetric $[\varphi,\vec{e}_3]$-minimal graphs,
\begin{proposition}
\label{controlinf}
Let $\varphi_1, \varphi_2:]a,b[\rightarrow \R$   be strictly increasing and convex functions satisfying that $\dot{\varphi}_1>\dot{\varphi}_2$ on $]a,b[$ and denote by  $u_{\varphi_1}$  and   $u_{\varphi_2}$  the  $[\varphi_i,\vec{e}_3]$-minimal graphs solutions to the corresponding problem \eqref{equationrot}-\eqref{ic}. Then   $$u_{\varphi_1}^\p> u_{\varphi_2}^\p, \qquad  \text{on $]0,r_0[$}.$$
\end{proposition}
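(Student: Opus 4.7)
The plan is to run a standard ODE comparison argument by contradiction, powered by the initial data supplied by Proposition \ref{epro}. First I write down what Proposition \ref{epro} gives at the origin, namely $u''_{\varphi_i}(0)=\dot\varphi_i(u_0)/2$. Since by hypothesis $\dot\varphi_1(u_0)>\dot\varphi_2(u_0)$, a Taylor expansion of each $u_{\varphi_i}'$ around $r=0$ yields
\[
u'_{\varphi_1}(r)-u'_{\varphi_2}(r)=\tfrac12\bigl(\dot\varphi_1(u_0)-\dot\varphi_2(u_0)\bigr)\,r+o(r),
\]
so the strict inequality $u'_{\varphi_1}>u'_{\varphi_2}$ holds on some right neighbourhood $]0,\varepsilon[$ of $0$.

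Next I suppose, for contradiction, that the claim fails: then there exists a first $r_*\in ]0,r_0[$ at which $u'_{\varphi_1}(r_*)=u'_{\varphi_2}(r_*)=:p$, while $u'_{\varphi_1}>u'_{\varphi_2}$ throughout $]0,r_*[$. Since $u_{\varphi_1}(0)=u_{\varphi_2}(0)=u_0$, integrating on $]0,r_*[$ gives $u_{\varphi_1}(r_*)>u_{\varphi_2}(r_*)$.

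Now I evaluate the difference of the two copies of equation \eqref{equationrot} at $r_*$. Because the first derivatives agree there, the singular term $u'/r$ cancels and I obtain
\[
u''_{\varphi_1}(r_*)-u''_{\varphi_2}(r_*)
=(1+p^{2})\bigl[\dot\varphi_1(u_{\varphi_1}(r_*))-\dot\varphi_2(u_{\varphi_2}(r_*))\bigr].
\]
By the hypothesis $\dot\varphi_1>\dot\varphi_2$ and the convexity of $\varphi_2$ (which makes $\dot\varphi_2$ non-decreasing), combined with $u_{\varphi_1}(r_*)>u_{\varphi_2}(r_*)$, I get
\[
\dot\varphi_1(u_{\varphi_1}(r_*))>\dot\varphi_2(u_{\varphi_1}(r_*))\ge\dot\varphi_2(u_{\varphi_2}(r_*)),
\]
so $u''_{\varphi_1}(r_*)>u''_{\varphi_2}(r_*)$. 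On the other hand, the choice of $r_*$ as a first coincidence point of $u'_{\varphi_1}$ and $u'_{\varphi_2}$ forces the one-sided derivative of $u'_{\varphi_1}-u'_{\varphi_2}$ at $r_*$ to be non-positive, i.e.\ $u''_{\varphi_1}(r_*)\le u''_{\varphi_2}(r_*)$, which is the desired contradiction.

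I do not anticipate a real obstacle: the monotonicity argument is routine once Proposition \ref{epro} is available. The only point that requires care is the separation at the origin, because both solutions share initial position and initial slope and equation \eqref{equationrot} is degenerate at $r=0$; this is precisely what the explicit value of $u''(0)$ from Proposition \ref{epro} resolves. The role of the convexity hypothesis on $\varphi_2$ is then the mild but essential one of guaranteeing that passing from $u_{\varphi_2}(r_*)$ up to the larger value $u_{\varphi_1}(r_*)$ cannot decrease $\dot\varphi_2$.
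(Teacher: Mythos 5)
Your proposal is correct and follows essentially the same argument as the paper: positivity of $u'_{\varphi_1}-u'_{\varphi_2}$ near $r=0$ from the value $u''(0)=\dot\varphi_i(u_0)/2$ of Proposition \ref{epro}, then a contradiction at the first coincidence point $r_*$, where the singular terms cancel and $u_{\varphi_1}(r_*)>u_{\varphi_2}(r_*)$ forces the second derivatives to be strictly ordered the wrong way. The only cosmetic difference is that you bridge the two values via the monotonicity of $\dot\varphi_2$ (writing $\dot\varphi_1(u_{\varphi_1})>\dot\varphi_2(u_{\varphi_1})\ge\dot\varphi_2(u_{\varphi_2})$), whereas the paper uses the monotonicity of $\dot\varphi_1$; both are available under the stated convexity hypotheses.
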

\begin{proof}
If we take the function $d:=u_{\varphi_1}^\p-u_{\varphi_2}^\p$, then   $d(0)=0$ and $$d\,^\p(0)=u_{\varphi_1}^\pp(0)-u_{\varphi_2}^\pp(0)=\left(\frac{\dot{\varphi}_{1}(u_{0})}{2}-\frac{\dot{\varphi}_{2}(u_{0})}{2}\right)>0.$$
Hence, there exists $\epsilon>0$ such that $d = u_{\varphi_1}^\p-u_{\varphi_2}^{\p}>0$  on  $]0,\epsilon[$. If there exists $r_{1}>0$ satisfying  $d(r_1)\leq 0$, we can take  $r^{*}:=\inf\{r>0: d(r)<0\}$ so that   $d(r^{*})=0$ and $d^{\,\p}(r^{*})\leq 0$. But, from \eqref{equationrot} and  having in mind that  $\int_0^{r^*}d > 0$, we get
\begin{align*}0\geq d\,^\p(r^{*})& =(1+u_{\varphi_1}^\p(r^{*})^2)\left[ \dot{\varphi}_{1}(u_{\varphi_1}(r^*))-\dot{\varphi}_{2}(u_{\varphi_2}(r^*)\right]\\
& >(1+u_{\varphi_1}^\p(r^{*})^2)\left[ \dot{\varphi}_{1}(u_{\varphi_2}(r^*))-\dot{\varphi}_{2}(u_{\varphi_2}(r^*)\right] > 0,
\end{align*}
which is a contradiction.
\end{proof}
\begin{remark}{\rm 
The above Proposition also holds if we assume that $\varphi_1, \varphi_2:]a,b[\rightarrow \R$  are smooth functions so that 
$$ \inf{\dot{\varphi}_{1}}>\sup{\dot{\varphi}_{2}}, \qquad \text{on $]a,b[$}.$$}
\end{remark}
As consequence of Proposition \ref{controlinf} and the asymptotic behavior of rotational solitons proved in  \cite{CSS} we have
\begin{corollary}
Let $\varphi:[a,+\infty[\rightarrow \R$ be strictly increasing regular function and $u$ be an entire solution of \eqref{equationrot}. If  there exists $\alpha>0$ such that $\dot{\varphi}>\alpha$, then 
$$u^\p(r)\geq  \alpha\, r-\frac{1}{\alpha\,r}, $$
for $r$ large enough.
\end{corollary}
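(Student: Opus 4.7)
The plan is to compare $u$ with the classical rotational translating soliton of velocity $\alpha$ and then to invoke the precise asymptotic behaviour of the latter established in \cite{CSS}. Set $\varphi_\alpha(t):=\alpha t$, so that $\dot\varphi_\alpha\equiv\alpha$, and let $u_\alpha$ be the rotational $[\varphi_\alpha,\vec{e}_3]$-minimal graph solving \eqref{equationrot}-\eqref{ic} with the same initial height $u(0)=u_0$. Up to the trivial rescaling $u_\alpha(r)=\alpha^{-1}u_1(\alpha r)$, this is the standard bowl soliton of velocity $\alpha$: an entire, strictly convex, strictly increasing rotational graph.

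\textbf{Step 1 (Comparison).} The hypothesis $\dot\varphi>\alpha=\sup\dot\varphi_\alpha$ on $[a,+\infty[$ places us in the setting of the Remark following Proposition \ref{controlinf} (with which no convexity assumption on $\varphi$ is required). Applying it yields $u^\p(r)\ge u_\alpha^\p(r)$ on the common interval of definition, which is all of $]0,+\infty[$ since both graphs are entire.

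\textbf{Step 2 (Asymptotics of the bowl).} The analysis of the rotational translating soliton in \cite{CSS} gives $u_1^\p(s)=s-\tfrac{1}{2s}+o(1/s)$ as $s\to\infty$; combined with the rescaling above,
$$u_\alpha^\p(r)=\alpha r-\frac{1}{2\alpha r}+o(1/r)\quad\text{as }r\to\infty.$$
Therefore $u_\alpha^\p(r)\ge \alpha r-\tfrac{1}{\alpha r}$ for all $r$ sufficiently large, and combined with Step 1 this produces the desired inequality.

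The main difficulty is not conceptual but rather lies in choosing the correct comparison partner ($\varphi_\alpha$ linear with slope exactly $\alpha$) and in checking that the comparison is legitimate in the form needed, namely via the Remark after Proposition \ref{controlinf}, which avoids having to assume convexity of $\varphi$. The sharper $\tfrac{1}{2\alpha r}$ correction supplied by the CSS asymptotic leaves a comfortable margin from which to conclude the weaker bound with $\tfrac{1}{\alpha r}$ stated in the corollary.
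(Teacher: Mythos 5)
Your strategy is exactly the one the paper intends (it gives no written proof, only the pointer to Proposition \ref{controlinf} and to \cite{CSS}): compare $u$ with the rotational bowl of velocity $\alpha$ and then quote the asymptotics of the latter. Step 1 is correct in substance but mis-justified: the Remark after Proposition \ref{controlinf} asks for $\inf\dot{\varphi}_1>\sup\dot{\varphi}_2$, and the pointwise hypothesis $\dot{\varphi}>\alpha$ does not give this (the infimum may equal $\alpha$). The comparison nevertheless holds because $\dot{\varphi}_2\equiv\alpha$ is constant: in the proof of Proposition \ref{controlinf}, at the first touching point $r^{*}$ one only needs $\dot{\varphi}(u(r^{*}))>\alpha$, so neither convexity of $\varphi$ nor the $\inf>\sup$ condition is required. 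This is a citation slip, not a real gap.

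The genuine gap is Step 2. The expansion in \cite{CSS} is $u_1(s)=\frac{s^{2}}{2}-\log s+O(s^{-2})$, hence $u_1'(s)=s-\frac{1}{s}+O(s^{-3})$, not $s-\frac{1}{2s}+o(1/s)$; the ``comfortable margin'' between $\frac{1}{2\alpha r}$ and $\frac{1}{\alpha r}$ on which your conclusion rests does not exist. Worse, inserting the formal series into $u''=(1+u'^{2})(1-u'/r)$ gives $u_1'(s)=s-\frac{1}{s}-\frac{2}{s^{3}}+\cdots$, so after rescaling the velocity-$\alpha$ bowl satisfies $u_\alpha'(r)=\alpha r-\frac{1}{\alpha r}-\frac{2}{\alpha^{3}r^{3}}+\cdots<\alpha r-\frac{1}{\alpha r}$ for $r$ large: the inequality you assert for $u_\alpha'$ is false, and comparison with this bowl alone cannot produce the stated constant. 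To close the argument you must use the strictness of $\dot{\varphi}>\alpha$ quantitatively — for instance, when $\inf\dot{\varphi}>\alpha$ compare with a bowl of velocity $\alpha'\in\,]\alpha,\inf\dot{\varphi}[$, whose linear gain $(\alpha'-\alpha)r$ absorbs all lower-order terms — or settle for a bound of the form $u'(r)\geq\alpha r-\frac{C}{\alpha r}$ with $C>1$, which is what the asymptotics of \cite{CSS} actually deliver; in the borderline case $\inf\dot{\varphi}=\alpha$ (e.g. $\dot{\varphi}(u)=\alpha+e^{-u}$) the sharp constant $1$ is genuinely problematic, a delicacy already present in the statement itself.
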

\subsection{Geometric description of revolution $[\varphi,\vec{e}_3]$-minimal surfaces}
Now, we want to describe  $[\varphi,\vec{e}_3]$-minimal surfaces that  are invariant under  the one-parameter group of rotations that fix the $\vec{e}_{3}$ direction. A such surface with generating curve the arc-lenght parametrized curve $$\gamma(s)=(x(s), 0, z(s)), \qquad   \ \ s\in I\subset\mathbb{R}$$ is given by,
\begin{equation}
\label{param}
\psi(s,t)=\left(x(s)\cos(t), x(s)\sin(t),z(s) \right), \emph{ } (s,t)\in I\times\mathbb{R}.
\end{equation}

The inner normal of $\psi$ writes as
\begin{equation}
\label{Gaussmap}
N(s,t)=\left(-z'(s)\cos(t),-z'(s)\sin(t) ,x'(s) \right),
\end{equation}
and the coefficients of the first and second fundamental form,
\begin{equation}
\label{coefi}
\begin{array}{lll}
&\langle\psi_{s},\psi_{s}\rangle=1,  & \langle\psi_{s},N_{s}\rangle =-\kappa, \\
&\langle\psi_{t},\psi_{t}\rangle=x^{2}, &\langle\psi_{t},N_{t}\rangle = -x \, z', \\ 
&\langle\psi_{s},\psi_{t}\rangle=0, &\langle\psi_{s},N_{t}\rangle =0,
\end{array}
\end{equation}
where $\kappa$ is the curvature of ${\gamma}$ and by $'$ we denote derivative respect to $s$.
\\

From \eqref{coefi}, the mean curvature vector of $\psi$ is given by
\begin{equation}
\label{Hvector}
{\text {\bf H}}=-\left(\kappa+\frac{z'}{x} \right)N.
\end{equation}
Consequently, from \eqref{me}, \eqref{param}  and \eqref{Gaussmap}, the surface $\psi$ is a $[\varphi,\vec{e}_{3}]$-minimal surface if and only if
\begin{equation}
\label{equz}
\left\{ \begin{array}{l}
x'=\cos(\theta)\\
z'=\sin(\theta),\\
\theta'=\dot{\varphi}(z)\text{cos}(\theta)-\frac{\text{sin}(\theta)}{x},
\end{array}\right.
\end{equation}
where  $\theta(s)=\int_0^s \kappa(t) dt$.

\

Along this section we will consider that 
  $\varphi:\ ]a,\infty[\ \longrightarrow \R$ is a strictly increasing and convex function, that is
  \begin{equation}
  \label{conditions}
  \dot{\varphi}>0, \quad \ddot{\varphi}\geq0,  \qquad \text{on $]a,\infty[$}.
  \end{equation}
\subsubsection{Globally convex examples}
Here, we want to study the solutions of \eqref{equz} with the following initial conditions,
\begin{equation}
\label{valorinicial} x(0)=0, \qquad z(0)=z_{0}\in ]a,\infty[,  \qquad \theta(0)=0.
\end{equation}
In this case, the surface intersects orthogonally the rotation axis and we have the following result:
\begin{theorem}
\label{bowl} If $x_0=0$,  then ${\gamma}$ is the graph of a strictly convex  symmetric function $u(x)$ defined  on a maximal interval $]-\omega_+,\omega_+[$ which has a minimum at $0$ and $$\lim_{x\rightarrow\pm \omega_+}u(x)=\infty.$$ 
\end{theorem}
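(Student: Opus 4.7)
The plan is to work with the equation in graph form throughout. First, I would apply Proposition~\ref{epro} to produce a local $C^2$ solution $u$ of \eqref{equationrot} on some $[0,R]$ satisfying $u(0)=z_0$, $u'(0)=0$, and $u''(0)=\dot\varphi(z_0)/2>0$. Since the right-hand side of \eqref{equationrot} is smooth for $r>0$ and bounded $u,u'$, standard ODE theory extends $u$ uniquely to a maximal half-interval $[0,\omega_+)$. Uniqueness together with $u'(0)=0$ then lets me reflect by setting $u(-r):=u(r)$ to obtain an even $C^2$ solution on $(-\omega_+,\omega_+)$, whose rotation is exactly the rotationally symmetric $[\varphi,\vec{e}_3]$-minimal surface determined by \eqref{equz}--\eqref{valorinicial}, after reparametrizing the graph of $u$ by arc length.

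The main obstacle is to prove the strict convexity $u''>0$ throughout $[0,\omega_+)$. I would argue by contradiction: since $u''(0)>0$, suppose $r_1>0$ is the first zero of $u''$, so $u''>0$ on $[0,r_1)$ and hence $u'(r_1)>0$. Differentiating the ODE $u''=(1+u'^2)\bigl(\dot\varphi(u)-u'/r\bigr)$ once and evaluating at $r_1$, the two terms carrying $u''(r_1)$ drop and I am left with
\[
u'''(r_1)=\bigl(1+u'(r_1)^2\bigr)\left(\ddot\varphi(u(r_1))\,u'(r_1)+\frac{u'(r_1)}{r_1^{2}}\right),
\]
which is strictly positive by the hypothesis $\ddot\varphi\ge 0$ from \eqref{conditions}. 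However, since $u''$ reaches $0$ at $r_1$ from strictly positive values on the left, one must have $u'''(r_1)\le 0$, a contradiction. Hence $u''>0$ on $[0,\omega_+)$, and by the even extension $u$ is strictly convex on $(-\omega_+,\omega_+)$ with its minimum at $0$.

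For the boundary behavior, strict convexity combined with \eqref{equationrot} gives the a priori bound $u'(r)/r<\dot\varphi(u(r))$ on $(0,\omega_+)$. If $\omega_+=\infty$, then since $u'$ is strictly increasing and $u'(1)>0$, the estimate $u(r)\ge u(1)+u'(1)(r-1)$ yields $u(r)\to\infty$. If instead $\omega_+<\infty$ and $u$ were bounded by some $C$, then the monotonicity of $\dot\varphi$ (guaranteed by $\ddot\varphi\ge 0$) would give $u'(r)<\omega_+\dot\varphi(C)$, keeping the pair $(u,u')$ bounded on $[0,\omega_+)$ and allowing extension past $\omega_+$, contradicting maximality. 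In either case $u(r)\to\infty$ as $r\to\omega_+$, which completes the proof. The delicate point is really the convexity step: the equation is degenerate at $r=0$, the natural divergence form \eqref{eqrot} yields only $u'(r)>0$ for $r>0$, and one cannot avoid differentiating the equation and exploiting $\ddot\varphi\ge 0$ at a putative first zero of $u''$.
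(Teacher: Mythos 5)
Your proof is correct, and at its core it is the paper's argument transplanted from the arc-length system \eqref{equz} to graph coordinates. The paper's key step is to show $\theta'>0$ by evaluating $\theta''$ at a putative first critical point, where \eqref{phisegundapositiva} gives $\theta''(s_0)>0$ from $\ddot\varphi\geq 0$; your computation of $u'''(r_1)$ at a first zero of $u''$ is exactly the graph-coordinate version of that identity, and the sign contradiction is the same (note $u$ is smooth for $r>0$, so $u'''(r_1)$ indeed exists even though Proposition \ref{epro} only gives ${\cal C}^2$ regularity up to $r=0$). The reflection argument for symmetry and the end behavior (the gradient bound $u'<r\,\dot{\varphi}(u)$ from strict convexity, prolongation if $(u,u')$ stayed bounded on a finite interval, linear lower bound if $\omega_+=\infty$) likewise match the paper, which phrases the same dichotomy as: a finite limit of $u$ would force $\omega_+=\infty$ by prolongation and then contradict convexity. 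The one genuine difference is that the theorem concerns the arc-length solution $\gamma$ of \eqref{equz}--\eqref{valorinicial}: the paper obtains graphicality of $\gamma$ as part of the argument (from $0<2\theta(s)<\pi$), whereas you construct the graph solution first and identify it with $\gamma$ afterwards by uniqueness. That identification is legitimate, but to close it you should add one remark: since $u\rightarrow\infty$ as $x\rightarrow\pm\omega_+$, the graph of $u$ has infinite arc length in each direction, so the maximal solution of \eqref{equz} cannot extend beyond it and $\gamma$ is exactly that graph. With that sentence added, your route is a slightly more self-contained ODE argument, at the modest price of the bookkeeping between the two parametrizations.
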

\begin{proof}
First of all, we remark that the existence of ${\gamma}$ around $s=0$ is guaranteed from Proposition \ref{epro}.

Moreover, it is easy to see that  $\overline{x}(s)=-x(-s)$, $\overline{z}(s)=z(-s)$ and $\overline{\theta}(s)=-\theta(-s)$ are also solutions of the same initial value problem \eqref{equz}-\eqref{valorinicial}. Hence, $\gamma$ is symmetric respect to $\vec{e}_{3}$ direction and we may consider only the case $s\geq 0$.

\

By application of  L'H\^{o}pital's rule, we have that $2 \theta'(0)=\dot\varphi(z_0)>0$ and $\gamma$ is a strictly locally convex planar curve around of $s=0$. We assert that   $\theta'(s)>0$ for $s\geq 0$, otherwise from \eqref{valorinicial}, there exists a first value $s_{0}>0$ such that $\theta'(s_{0})=0$ and $\theta''(s_{0})\leq 0$. As $\theta' >0$ on $[0,s_0[$, from \eqref{equz} we have that  $0<2 \theta(s_{0})<\pi$ and by  differentiation  of \eqref{equz}, we get,
\begin{equation}
\label{phisegundapositiva}
\theta''(s_{0})=\frac{\sin(2\theta(s_{0}))}{2}\left(\ddot{\varphi}(z(s_{0}))+\frac{1}{x(s_{0})^2} \right) > 0,
\end{equation}
getting a \textit{contradiction}. 

In the same way, as $\theta' >0$ for $s>0$,  we have that $0<2 \theta(s)<\pi$ for $s>0$ and $\gamma$ is the graph of a strictly convex function $u=u(x)$ which is a ${\cal C}^2$  solution of 
\begin{equation}
\left\{ \begin{array}{l}\label{des1}
u^\pp=\left(1+u^{\p 2} \right)\left(\dot{\varphi}(u)-\frac{u^\p}{x}\right) >0,\\
u(0) = z_0, \qquad u^\p(0) = 0,
\end{array}
 \right.
 \end{equation}
on the maximal interval of existence $]-\omega_+, \omega_+[$. Finally, if  $\text{lim}_{x\rightarrow\pm \omega_+}u(x)=h_{0}<\infty$, then the  standard theory of prolongation of solutions, gives that  $\omega_+=+\infty$ which is also a contradiction by the convexity of $u$. 
\end{proof}

\begin{figure}[h]\label{figurebowls}
\begin{center}
\includegraphics[width=0.32\linewidth]{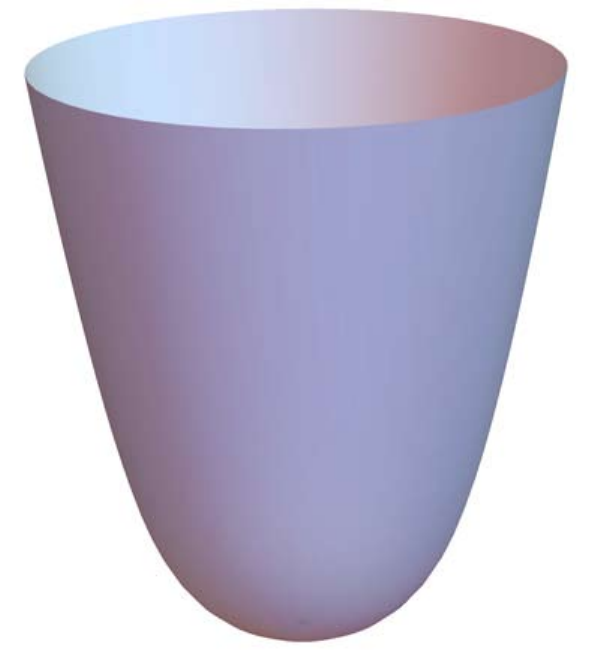}
\qquad \qquad 
\includegraphics[width=0.17\linewidth]{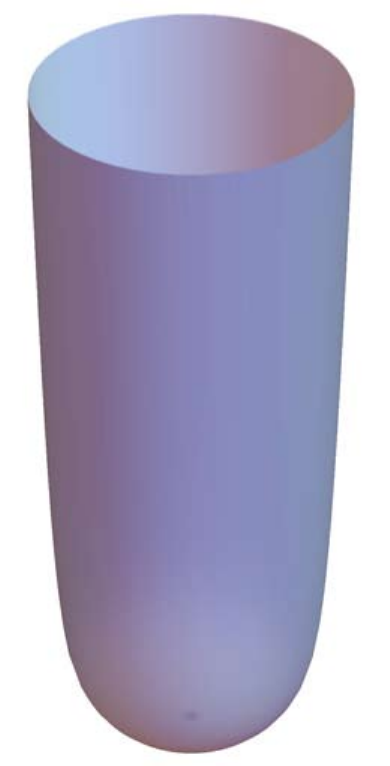}
\end{center}
\caption{$[\varphi,\vec{e}_{3}]$-minimal bowls for $\dot{\varphi}(u)=e^{-1/u}$(left) and $\dot{\varphi}(u)=u^2\text{ (right)}$.}
\end{figure}

\begin{definition} {\rm If $\gamma$ is a graph as in Theorem \ref{bowl}, we are going to say that the revolution surface with generating curve $\gamma$ is a} $[\varphi,\vec{e}_{3}]$-minimal bowl.
\end{definition}

\subsubsection{Non-convex examples}\label{ctypeexamples}
Now, we want to study the solutions of \eqref{equz} with the following initial conditions,
\begin{equation}
\label{condicionescatenoid} x(0)=x_0>0, \qquad z(0)=z_{0}\in ]a,\infty[,  \qquad \theta(0)=0.
\end{equation}

From standard theory, the existence and uniqueness of  solution to the  problem \eqref{equz}-\eqref{condicionescatenoid}  is guaranteed. 

\

Let $]-s_-,s_+[$ be the maximal interval of existence and 
consider $\gamma^{+}:=\gamma\big\vert_{[0,s_+[}$ the right branch of $\gamma$. Arguing as in Theorem \ref{bowl}, we can prove that $\gamma^+$ is the graph of a convex function $u=u(x)$ defined on a maximal interval $]x_0,\omega_+[$, such that $$\lim_{x\rightarrow  \omega_+}u(x)=\infty.$$

 For studying the left branch of $\gamma$ we are going to consider, $\gamma^-(s) = \gamma(-s)$ for $s\in [0,s_-[$. Then, by taking $\overline{x}(s)=x(-s)$,  $\overline{z}(s)=z(-s)$ and  $\overline{\theta}(s)=\theta(-s)+\pi$ for $s\in [0,s_-[$, we have that $\{ \overline{x}, \overline{z},\overline{\theta}\}$ is a solution of \eqref{equz} on $[0,s_-[$ satisfying
\begin{equation}
\label{condicionescatenoid2} \overline{x}(0)=x_0>0, \qquad \overline{z}(0)=z_{0}\in ]a,\infty[,  \qquad \overline{\theta}(0)=\pi.
\end{equation}
\begin{lemma}\label{lwl1}
There exists $s_0\in ]0,s_-[$ such that $2 \overline{\theta}(s_0) = \pi$.
\end{lemma}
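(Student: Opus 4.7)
The plan is to argue by contradiction: assume $\overline{\theta}(s) > \pi/2$ for every $s \in [0,s_-[$; since $\overline{\theta}(0) = \pi$ and $\overline{\theta}$ is continuous, reaching any impossibility will give the desired $s_0$ by the intermediate value theorem. Under this assumption, as long as $\overline{\theta}\in\,]\pi/2,\pi]$, both terms of $\overline{\theta}' = \dot{\varphi}(\overline{z})\cos\overline{\theta} - \sin\overline{\theta}/\overline{x}$ are non-positive and strictly negative for $s>0$, so $\overline{\theta}$ is strictly decreasing on $]0,s_-[$ and stays inside $]\pi/2,\pi[$. Likewise $\overline{x}' = \cos\overline{\theta} < 0$ forces $\overline{x}$ to decrease strictly from $x_0$, while $\overline{z}' = \sin\overline{\theta} \geq 0$ keeps $\overline{z}$ non-decreasing.

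The first step is to rule out $s_- = +\infty$. Pick any $s_1 > 0$. Monotonicity of $\sin$ on $[\pi/2,\pi]$ gives $\sin\overline{\theta}(s) \geq \sin\overline{\theta}(s_1) =: c_1 > 0$ for $s \geq s_1$, and combined with $\overline{x} \leq x_0$ and $\dot{\varphi}\cos\overline{\theta} \leq 0$ this yields $\overline{\theta}'(s) \leq -c_1/x_0$. Such a uniform negative bound forces $\overline{\theta}\to -\infty$, contradicting $\overline{\theta} > \pi/2$. Hence $s_- < +\infty$. Since $|\overline{z}'|\leq 1$ keeps $\overline{z}$ bounded on $[0,s_-[$ and $\overline{\theta}$, $\overline{x}$ are also bounded, the maximal solution of \eqref{equz} can fail to extend past $s_-$ only by approaching the singular locus $\overline{x}=0$; thus $\overline{x}(s)\to 0$ as $s\to s_-$.

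The final contradiction is the endpoint analysis at $s_-$. The Lipschitz estimate $|\overline{x}'|\leq 1$ together with $\overline{x}(s_-) = 0$ gives $\overline{x}(s)\leq s_- - s$, so $\int_{s_1}^{s_-}d\tau/\overline{x}(\tau) = +\infty$. But $\overline{\theta}' \leq -\sin\overline{\theta}/\overline{x} \leq -c_1/\overline{x}$ on $[s_1,s_-[$, and integrating this drives $\overline{\theta}\to -\infty$, again incompatible with $\overline{\theta}>\pi/2$. I expect the main obstacle to be precisely this endpoint analysis: one has to separately rule out $s_- = +\infty$ and the scenario $s_- < +\infty$ with $\overline{x}\to 0$, and the delicate quantitative input in the second case is the non-integrability of $1/\overline{x}$ against the crude Lipschitz bound $\overline{x}(s)\leq s_- - s$, which forbids $\overline{\theta}$ from converging gently to $\pi/2$ from above as $s\to s_-$.
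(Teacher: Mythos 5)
Your proof is correct, and it takes a genuinely different route from the paper's. The paper argues on the contradiction hypothesis $\overline{\theta}\in\,]\pi/2,\pi[$ by passing to the profile curve as a convex graph $\overline{u}(\overline{x})$, splitting according to whether $\overline{x}_->0$ (in which case $\overline{z}\to\infty$, $s_-=\infty$) or $\overline{x}_-=0$; in the first case it extracts a sequence with $\overline{\theta}'(s_n)\to 0$ and contradicts the ODE since $\sin\overline{\theta}/\overline{x}\to 1/\overline{x}_-\neq 0$, and in the second case it invokes Serrin's removable-singularity theorem to get $\overline{u}\to+\infty$ and argues ``as above.'' You instead stay entirely at the level of the ODE system: the uniform bound $\overline{\theta}'\leq-\sin\overline{\theta}(s_1)/x_0$ kills $s_-=+\infty$ directly, and when $s_-<+\infty$ the escape-from-compacts property (note $\overline{z}$ is non-decreasing, so it stays in a compact subset of $]a,\infty[$, and $|\overline{z}'|\leq 1$ keeps it bounded) forces $\overline{x}\to 0$, after which the Lipschitz bound $\overline{x}(s)\leq s_--s$ makes $\int 1/\overline{x}$ diverge and drives $\overline{\theta}\to-\infty$. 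What your version buys is self-containedness and a sharper finish: you avoid both the citation to Serrin and the convexity discussion, and — more importantly — in the finite-time case the paper's ``arguing as above'' implicitly needs a sequence with $\overline{\theta}'(s_n)\to 0$, which is not automatic for a monotone bounded function on a finite interval, whereas your non-integrability estimate closes exactly that point quantitatively. What the paper's route buys is coherence with the rest of Section 4, where the same graph/convexity viewpoint and the Serrin argument are reused (e.g.\ in Lemma 4.8 and in the bowl construction). Two cosmetic remarks: you should note explicitly that $\overline{\theta}$ cannot return to $\pi$ (at any point with $\overline{\theta}=\pi$ one has $\overline{\theta}'=-\dot{\varphi}(\overline{z})<0$), so it indeed stays in $]\pi/2,\pi[$ for $s>0$ under your hypothesis, and that $\overline{\theta}(s_1)<\pi$ strictly (from strict monotonicity) so that $c_1>0$; both are immediate and you essentially assert them.
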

\begin{proof}
Assume on the contrary,  $ \overline{\theta}(s)\in ] \frac{\pi}{2},\pi[$ for all  $s\in]0,s_-[$ and,  from \eqref{equz}-\eqref{condicionescatenoid2}, we have that $\overline{x}\,'<0$,  $\overline{\theta}\,'<0$   and $\overline{z}\,'>0$ on $]0,s_-[$. Hence, there exist  
$$\overline{x}_- = \lim_{s\rightarrow s_-} \overline{x}(s), \quad \overline{z}_- = \lim_{s\rightarrow s_-} \overline{z}(s), \quad
\overline{\theta}_- = \lim_{s\rightarrow s_-} \overline{\theta}(s),$$
and, as $]-s_-,s_+[$ is the maximal interval of existence of $\gamma$, we have that either $\overline{x}_-=0$ or $\overline{z}_-=\infty$.  
So,  $\gamma^-$ is the graph of a convex function $\overline{u}=\overline{u}(\overline{x})$ on $]\overline{x}_-,x_0[$ such that either $\overline{x}_-=0$ or $\lim_{\overline{x}\rightarrow\overline{x}_-}\overline{u}(\overline{x}) = +\infty$.

In the first case, if $\lim_{\overline{x}\rightarrow\overline{x}_-}\overline{u}(\overline{x}) =+ \infty$, from the convexity of $\overline{u}$ we get that $\overline{\theta}_-=\frac{\pi}{2}$ and there exists a sequence $\{s_n\}\rightarrow s_-$ satisfying $\overline{\theta}\,'(s_n) \rightarrow 0$, but then, from \eqref{equz},
\begin{align*} 0 &= \lim_{n\rightarrow\infty}\overline{\theta}\,'(s_n) = \cos(\overline{\theta}(s_n)) \dot{\varphi}(\overline{z}(s_n)) - \frac{\sin(\overline{\theta}(s_n))}{\overline{x}(s_n)} \\
& \leq  \cos(\overline{\theta}(s_n)) \dot{\varphi}(\overline{z}(s_n))  \leq 0.\end{align*}
Thus, 
$$ 0 = \lim_{n\rightarrow\infty} \cos(\overline{\theta}(s_n)) \dot{\varphi}(\overline{z}(s_n)) = \lim_{n\rightarrow\infty}\frac{\sin(\overline{\theta}(s_n))}{\overline{x}(s_n)} = \frac{1}{\overline{x}_-}\neq 0,$$
which is a contradiction.

\

If $\overline{x}_-=0$ then, from \cite[Theorem 2]{Serrin}, $\lim_{\overline{x} \rightarrow 0}\overline{u}(\overline{x}) = +\infty$ and arguing as above we also obtain a contradiction.
\end{proof}
\begin{lemma}\label{lwl2} If 
   $s\in]s_0,s_-[$, then $0<2\overline{\theta}(s)<\pi$.
\end{lemma}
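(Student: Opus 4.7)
My plan is to evaluate the third equation of \eqref{equz} at the boundary values $\overline\theta=\pi/2$ and $\overline\theta=0$ and combine this with a first-exit-time argument. At any point where $\overline\theta$ touches one of these two values, the ODE forces a definite sign of $\overline\theta'$ that is incompatible with $\overline\theta$ approaching the boundary from inside the interval $]0,\pi/2[$.

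First I would compute $\overline\theta'(s_0)$. Since $\overline\theta(s_0)=\pi/2$ and $s_0$ is an interior point of the maximal existence interval, so that $\overline x(s_0)>0$, the third line of \eqref{equz} gives
\[
\overline\theta'(s_0)=\dot\varphi(\overline z(s_0))\cos(\pi/2)-\frac{\sin(\pi/2)}{\overline x(s_0)}=-\frac{1}{\overline x(s_0)}<0,
\]
so $\overline\theta$ strictly descends through $\pi/2$ at $s_0$; hence there is $\epsilon>0$ with $\overline\theta(s)\in\,]0,\pi/2[$ for every $s\in\,]s_0,s_0+\epsilon[$.

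Next I would define $T:=\sup\{\sigma\in\,]s_0,s_-[\ :\ \overline\theta(s)\in\,]0,\pi/2[\text{ for all }s\in\,]s_0,\sigma[\}$ and argue by contradiction that $T=s_-$. Assume $T<s_-$. Then $\overline x(T)>0$, as we remain strictly inside the maximal existence interval, and by continuity $\overline\theta(T)\in\{0,\pi/2\}$. If $\overline\theta(T)=\pi/2$, then $\overline\theta<\pi/2$ on $]s_0,T[$ forces the left derivative to satisfy $\overline\theta'(T)\geq 0$, which contradicts $\overline\theta'(T)=-1/\overline x(T)<0$ obtained exactly as above. If instead $\overline\theta(T)=0$, then $\overline\theta>0$ on $]s_0,T[$ gives $\overline\theta'(T)\leq 0$, contradicting $\overline\theta'(T)=\dot\varphi(\overline z(T))>0$, where positivity of $\dot\varphi$ comes from the standing assumption \eqref{conditions}.

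I do not expect a serious obstacle here: the argument is a purely local reading of the phase plane combined with a standard continuity argument. The only mild care needed is to confirm that the singular term $1/\overline x$ stays bounded at the would-be exit time $T$, which is automatic because $\overline x$ cannot vanish inside the maximal interval of existence of \eqref{equz}.
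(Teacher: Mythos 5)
Your argument is correct and is essentially the paper's own proof: the lemma follows from the sign observations $\overline{\theta}\,'=-1/\overline{x}<0$ wherever $\overline{\theta}=\pi/2$ and $\overline{\theta}\,'=\dot{\varphi}(\overline{z})>0$ wherever $\overline{\theta}=0$, which is exactly what you establish. Your first-exit-time formalization and the remark that $\overline{x}>0$ on the maximal interval merely spell out details the paper leaves implicit.
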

\begin{proof}
It is clear because $\overline{\theta}\,' <0$ on $\overline{\theta}\,^{-1}(\frac{\pi}{2})$ and  $\overline{\theta}\,' >0$ on $\overline{\theta}\,^{-1}(0)$.
\end{proof}
\begin{lemma}\label{lwl3}  $\overline{\theta}$  has a  minimum at a point $s_1\in ]s_0,s_-[$ and   $\overline{\theta}\,' >0$ on $]s_1,s_-[$ 
\end{lemma}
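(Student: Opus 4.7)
The plan is (i) to derive an inequality showing that any critical point of $\overline{\theta}$ in the regime $\overline{\theta}\in\,]0,\pi/2[$ is a strict local minimum, (ii) to produce $s_{1}$ by ruling out the possibility that $\overline{\theta}'<0$ on all of $\,]s_{0},s_{-}[$, and (iii) to use (i) to prevent $\overline{\theta}'$ from vanishing again after $s_1$.

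For step (i), I would differentiate the third equation of \eqref{equz} and substitute $\overline{x}'=\cos\overline{\theta}$, $\overline{z}'=\sin\overline{\theta}$ to obtain
$$\overline{\theta}''=\sin\overline{\theta}\,\cos\overline{\theta}\left(\ddot{\varphi}(\overline{z})+\frac{1}{\overline{x}^{2}}\right)-\overline{\theta}'\left(\dot{\varphi}(\overline{z})\sin\overline{\theta}+\frac{\cos\overline{\theta}}{\overline{x}}\right).$$
Under the standing convexity hypothesis $\ddot{\varphi}\geq 0$ and the range $\overline{\theta}\in\,]0,\pi/2[$ supplied by Lemma \ref{lwl2}, any zero of $\overline{\theta}'$ therefore satisfies $\overline{\theta}''>0$, exactly as in \eqref{phisegundapositiva}.

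For step (ii), set $s_{1}:=\sup\{s\in\,]s_{0},s_{-}[ : \overline{\theta}'<0\text{ on }\,]s_{0},s[\,\}$ and assume for contradiction that $s_{1}=s_{-}$. Then $\overline{\theta}$ decreases monotonically to a limit $\overline{\theta}_{-}\in[0,\pi/2)$, while $\overline{x}'=\cos\overline{\theta}>0$ and $\overline{z}'=\sin\overline{\theta}\geq 0$, so $\overline{x}$ and $\overline{z}$ are increasing with $\overline{x}\geq\overline{x}(s_{0})>0$ and $\overline{z}\geq z_{0}>a$. If in addition $s_{-}<\infty$, these bounds keep $(\overline{x},\overline{z},\overline{\theta})$ inside a compact subset of the domain of smoothness of \eqref{equz}, so the solution extends past $s_{-}$, contradicting maximality. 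Hence $s_{-}=\infty$, and since $\cos\overline{\theta}\to\cos\overline{\theta}_{-}>0$, one has $\overline{x}\to\infty$. Passing to the limit in the third equation of \eqref{equz} and using $\dot{\varphi}(\overline{z})\geq\dot{\varphi}(z_{0})>0$ together with $\sin\overline{\theta}/\overline{x}\to 0$ yields
$$\liminf_{s\to\infty}\overline{\theta}'(s)\ \geq\ \dot{\varphi}(z_{0})\cos\overline{\theta}_{-}>0,$$
which contradicts $\overline{\theta}'<0$. Therefore $s_{1}<s_{-}$, and by continuity $\overline{\theta}'(s_{1})=0$; by step (i), $s_{1}$ is a strict local minimum of $\overline{\theta}$.

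For step (iii), suppose $\overline{\theta}'$ vanishes again at a smallest point $s_{2}\in\,]s_{1},s_{-}[$. Since $\overline{\theta}''(s_{1})>0$, one has $\overline{\theta}'>0$ on $\,]s_{1},s_{2}[$; but $\overline{\theta}(s_{2})\in\,]0,\pi/2[$ by Lemma \ref{lwl2}, so step (i) forces $\overline{\theta}''(s_{2})>0$, which would make $\overline{\theta}'<0$ immediately to the left of $s_{2}$, a contradiction. Thus $\overline{\theta}'>0$ on $\,]s_{1},s_{-}[$. The main obstacle, as I see it, is the maximality argument forcing $s_{-}=\infty$ in step (ii); once that is established, the strict positivity of $\dot{\varphi}$ and the unboundedness of $\overline{x}$ together make the contradiction at infinity immediate.
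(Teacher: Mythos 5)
Your proof is correct, and it follows the paper's overall strategy (contradiction for the existence of $s_1$, plus the second-derivative identity $\overline{\theta}''=\sin\overline{\theta}\cos\overline{\theta}\,(\ddot{\varphi}+1/\overline{x}^{2})$ at critical points, which is exactly \eqref{phisegundapositiva} and the displayed formula for $\overline{\theta}''(s_1)$ in the paper), but the way you complete the contradiction in step (ii) is genuinely different. The paper extracts a sequence $s_n\to s_-$ with $\overline{\theta}'(s_n)\to 0$, uses the limit computation $\cos(\overline{\theta}_-)\dot{\varphi}(\overline{z})\geq\cos(\overline{\theta}_-)\dot{\varphi}(z_0)>0$ to conclude $\overline{\theta}_-\neq 0$ and $\overline{x}_-<\infty$, and then contradicts this by viewing $\gamma^-$ as a concave graph with bounded slope that would have to blow up over a bounded interval. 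You instead first rule out $s_-<\infty$ by the standard escape-from-compact-sets property of maximal solutions, then use $s_-=\infty$ and $\cos\overline{\theta}\to\cos\overline{\theta}_->0$ to get $\overline{x}\to\infty$, so that the ODE \eqref{equz} directly forces $\liminf\overline{\theta}'\geq\dot{\varphi}(z_0)\cos\overline{\theta}_->0$. What this buys: your route treats the case $s_-<\infty$ explicitly (the paper's extraction of a sequence with $\overline{\theta}'(s_n)\to 0$ is only immediate when $s_-=\infty$) and avoids passing to the graph representation, so the whole argument stays at the level of the phase portrait; the paper's version, on the other hand, reuses the graph description that runs through Theorem \ref{bowl} and Lemma \ref{lwl1}. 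Two small points you should make explicit: the set defining $s_1$ is nonempty because $\overline{\theta}(s_0)=\pi/2$ gives $\overline{\theta}'(s_0)=-1/\overline{x}(s_0)<0$; and the compactness claim when $s_-<\infty$ needs upper bounds on $\overline{x},\overline{z}$, which follow from the unit-speed relations $|\overline{x}'|,|\overline{z}'|\leq 1$ on a finite parameter interval. Both are immediate, and steps (i) and (iii) are exactly the paper's mechanism for preventing further zeros of $\overline{\theta}'$.
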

\begin{proof}
Assume that $\overline{\theta}\,' <0$ on $]s_0,s_-[$. Then, from Lemma \ref{lwl2}, $\overline{x}\nearrow \overline{x}_-$,  $\overline{z}\nearrow \overline{z}_-$ and $\overline{\theta}\searrow\overline{\theta}_-\in[0,\frac{\pi}{2}[$ when $s\rightarrow s_-$.  In particular, there is a sequence $\{s_n\}\rightarrow s_-$ satisfying $ \lim_{n\rightarrow \infty} \overline{\theta}\,' (s_n) =0$.

\

Under this assumption,  we assert that  $\overline{\theta}_-\neq 0$ and $\overline{x}_-<+\infty$, otherwise 
\begin{align*} 0 &= \lim_{n\rightarrow \infty} \overline{\theta}\,' (s_n) = \cos(\overline{\theta}(s_n)) \dot{\varphi}(\overline{z}(s_n)) - \frac{\sin(\overline{\theta}(s_n))}{\overline{x}(s_n)} \\
& =\cos(\overline{\theta}_-) \lim_{n\rightarrow \infty}\dot{\varphi}(\overline{z}(s_n))  \geq \cos(\overline{\theta}_-) \dot{\varphi}(z_0) > 0,
\end{align*}
which is a contradiction. Thus,   $\gamma^-$ is the graph of a concave function $\overline{u}=\overline{u}(\overline{x})$ on a bounded interval $]\overline{x}(s_0),\overline{x}_-[$ satisfying  $\lim_{\overline{x}\rightarrow\overline{x}_-}\overline{u}(\overline{x}) = +\infty$ but this is also a contradiction because $\overline{\theta}$ is strictly decreasing on $]s_0,s_-[$.

\

Hence there exists $s_1\in ]s_0,s_-[$ such that $\overline{\theta}\,' (s_1)=0$. Moreover, from \eqref{equz},
$$ \overline{\theta}\,'' (s_1) = \sin(\overline{\theta}(s_1)) \cos(\overline{\theta}(s_1))(\ddot{\varphi}(\overline{z}(s_1)) + \frac{1}{\overline{x}(s_1)})>0,$$
and $s_1$ is a local minimum of $\overline{\theta}$.  Now, arguing as in Theorem \ref{bowl}, we can prove that,  on  the interval $]\overline{x}(s_1),\overline{x}_-[$,  $\gamma^-$ is the graph of a convex function satisfying $$\lim_{\overline{x}\rightarrow  \overline{x}_ -}\overline{u}(\overline{x})=+\infty.$$
\end{proof}

\begin{lemma}
\label{lwl4}
The profile curve $\gamma$ is embedded.
\end{lemma}
\begin{proof}
Let $s_{0}\in ]-s_{-},0[$ the point given by the Lemma \ref{lwl1} and consider the following branches of $\gamma$ determinated by $\gamma\big\vert_{-s_{-},s_{0}[}$ and $\gamma\big\vert_{ ]s_{0},s_{+}[}$ respectively, parametrized by
\begin{align*}
&u_{+}(x)=(x,u_{+}(x)) \text{ for any } x\in ]x_{0},x(s_{-})[  \\
& u_{-}(s)=(x,u_{-}(x)) \text{ for any } x\in ]x_{0},x(s_{+})[,
\end{align*}
where $u$ is solution of the equation \eqref{des1}. Now, define the following smooth function $d(x)=u^{\p}_{+}(x)-u^{\p}_{-}(x)$. It is clear that $d(x)>0$ for $x\in ]x_{0},x_{0}+\delta[$ for some $\delta>0$. Suppose that there exists a first $r\geq x_{0}+\delta$ such that $d(r)=0$ and $d^{\p}(r)\leq 0$. Consequently, $u_{+}(x)>u_{-}(x)$ for any $x\in ]x_{0},r[$  and from the equation \ref{des1}, we get to contradiction since,
$$d^{\p}(r)=(1+u^{\p}(r)^{2})\left(\dot{\varphi}(u_{+}(r))-\dot{\varphi}(u_{-}(r)) \right)>0.$$
Thus, $d^{\p}>0$ everywhere and integrating  $u_{+}(x)>u_{-}(x)$ for any $x\geq x_{0}$.
\end{proof}

\begin{figure}[h]
\begin{center}
\includegraphics[width=0.35\linewidth]{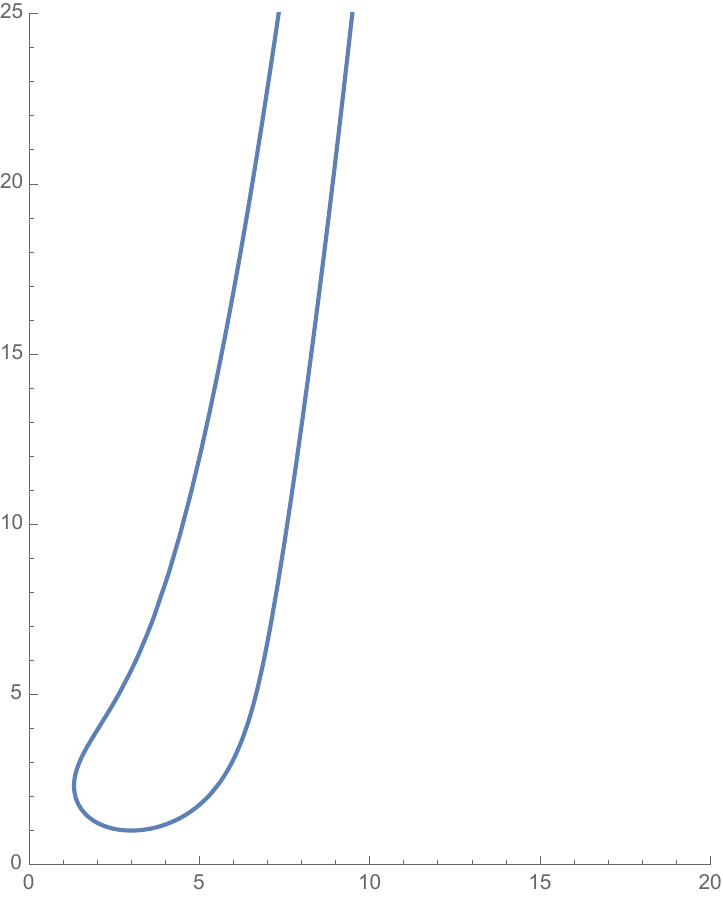}
\quad  
\includegraphics[width=0.38\linewidth]{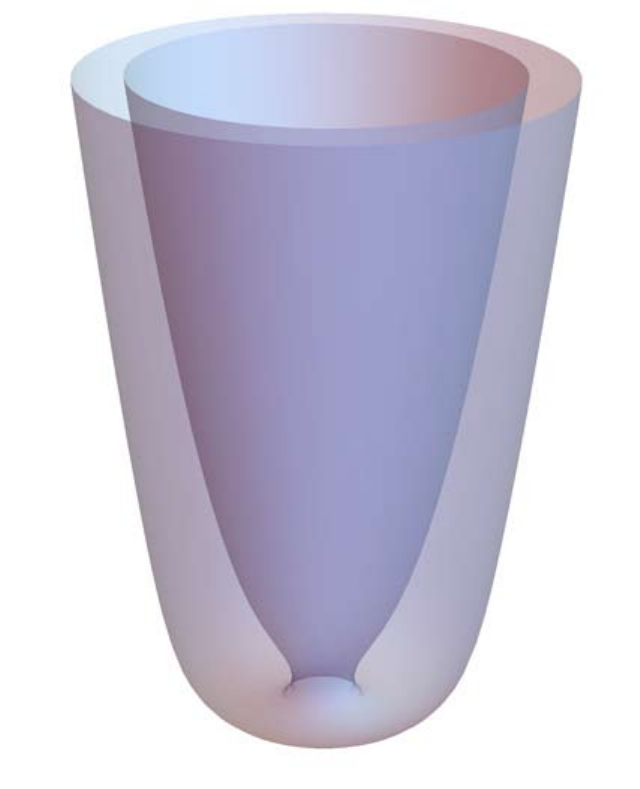}
\end{center}
\caption{$[\varphi,\vec{e}_{3}]$-minimal catenoid with $\dot{\varphi}(u)=e^{-1/u}$.}\label{fcatenoid}
\end{figure}

\begin{theorem}
\label{existencecatenoid}
For every  $x_{0}>0$, there exists a complete embedded rotational $[\varphi,\vec{e}_{3}]$-minimal, see Figure \ref{fcatenoid} (right) with the annulus topology whose distance to axis of revolution is $x_{0}$ and whose generating curve $\gamma$ is of winglike type see Figure \ref{fcatenoid} (left).

These examples will be called  $[\varphi,\vec{e}_{3}]$-minimal {\sl catenoids}.
\end{theorem}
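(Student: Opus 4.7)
The overall plan is to assemble the catenoid by revolving the curve $\gamma=\gamma^+\cup\gamma^-$ produced by the initial value problem \eqref{equz}-\eqref{condicionescatenoid}, and to verify three things: (i) both branches exist up to their maximal interval and escape to infinity in the $z$-direction, so $\gamma$ has infinite arclength; (ii) the distance of $\gamma$ to the axis of revolution stays uniformly positive, so the revolution is regular and has annulus topology; and (iii) $\gamma$ has the described winglike shape.

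For the right branch $\gamma^+$ I would argue exactly as in Theorem \ref{bowl}. Since $\theta(0)=0$ and $\theta'(0)=\dot{\varphi}(z_0)>0$, the sign-propagation argument based on the identity \eqref{phisegundapositiva} (whose only ingredients are $\ddot{\varphi}\ge 0$ and $x>0$, both available here) shows that $\theta'>0$ and $0<2\theta<\pi$ on $[0,s_+[$. Hence $\gamma^+$ is the graph of a strictly convex function $u$ over a maximal interval $[x_0,\omega_+[$, with $u(x_0)=z_0$ and $u'(x_0)=0$; the prolongation argument of Theorem \ref{bowl} rules out a finite limit for $u$, so $u(x)\to +\infty$ as $x\to\omega_+$. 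In particular $\gamma^+$ has infinite arclength.

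For the left branch $\gamma^-$ the preparatory Lemmas \ref{lwl1}--\ref{lwl3} already do the hard work. Combined, they give: $\overline{\theta}$ descends from $\pi$ to $\pi/2$ at $s_0$, producing the leftmost vertical-tangent point of $\gamma$ at horizontal distance $\overline{x}(s_0)>0$; stays in $(0,\pi/2)$ on $(s_0,s_-)$; attains a positive local minimum at $s_1$; and then strictly increases on $]s_1,s_-[$, so that $\gamma^-$ becomes a convex graph $\overline{u}(\overline{x})$ on $]\overline{x}(s_1),\overline{x}_-[$ with $\overline{u}\to +\infty$ at the right endpoint. Therefore $\gamma^-$ also has infinite arclength and remains at horizontal distance at least $\overline{x}(s_0)>0$ from the axis.

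Gluing $\gamma^+$ and $\gamma^-$ at $(x_0,z_0)$, where both branches meet with horizontal tangents of opposite orientation, produces a single smooth embedded curve $\gamma\subset\{x>0\}$ whose $z$-coordinate tends to $+\infty$ at both ends and whose shape is precisely the winglike profile of Figure \ref{fcatenoid} (left). Its revolution $\psi(s,t)$, $(s,t)\in\mathbb{R}\times\mathbb{S}^1$, is then smooth, complete, embedded, and of annulus topology, yielding the desired $[\varphi,\vec{e}_3]$-minimal catenoid at distance $x_0$ from the axis. The real obstacle is the left-branch analysis already carried out in Lemmas \ref{lwl1}--\ref{lwl3}; once those are in hand, this theorem is a matter of repeating the bowl-type argument on the right branch and packaging the two pieces together with the elementary completeness check.
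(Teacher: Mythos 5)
Your argument is essentially the paper's own proof: the right branch is treated by repeating the sign-propagation argument of Theorem \ref{bowl} (valid since $\ddot\varphi\geq 0$ and $x>0$), the left branch is precisely the content of Lemmas \ref{lwl1}--\ref{lwl3}, and the gluing/completeness step is the same routine packaging. One caveat: your assertion that the glued curve, and hence the surface, is \emph{embedded} is neither needed for the statement nor justified by these lemmas — the two convex ends are distinct solutions of the same profile equation and could a priori cross for a general increasing convex $\varphi$, which is why the theorem claims only completeness and annulus topology (and note that the actual minimal distance to the axis is $\overline{x}(s_0)\leq x_0$, attained at the vertical-tangent point, an imprecision inherited from the statement itself).
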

\begin{proof}
It follows from Lemma \ref{lwl1},  Lemma \ref{lwl2}, Lemma \ref{lwl3} and Lemma \ref{lwl4}.
\end{proof}

\begin{proposition}
\label{intervalodef}
Under the above conditions, the following staments hold:
\begin{enumerate}
\item If $\dot{\varphi}$ has at most a linear growth, then $\omega_{+}=+\infty$ and $\overline{x}_-=+\infty$.
\item If $\dot{\varphi}$  growths as  $u^{\alpha}$ for some  $\alpha>1$, then $\omega_+ , \overline{x}_-\in \R$.
\end{enumerate}
\end{proposition}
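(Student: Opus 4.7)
The plan is to reparametrize the generating curve by the height $z$ on the branch of interest, derive an exact identity for $d(x^{2})/dz$, and read off a two-sided comparison between the asymptotic radius and the integral $\int^{\infty}d\tau/\dot{\varphi}(\tau)$. For the bowl the relevant branch is all of $s\geq 0$, while for the catenoid it is the outward arc $s\in(s_{1},s_{-})$ past the point of Lemma~\ref{lwl3}; on both, $\theta$ is monotone in $(0,\pi/2)$ and $dz/ds=\sin\theta>0$, so $z$ can serve as a parameter. Then $dx/dz=\cot\theta$, and dividing the equation for $\theta'$ in \eqref{equz} by $\sin\theta$ yields $d\theta/dz=\dot{\varphi}(z)\cot\theta-1/x$. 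Solving for $\cot\theta$, multiplying by $x$, and using $x\,(dx/dz)=(d/dz)(x^{2}/2)$ produces the identity
$$\frac{d}{dz}\!\left(\frac{x^{2}}{2}\right)=\frac{1}{\dot{\varphi}(z)}+\frac{x(z)}{\dot{\varphi}(z)}\,\frac{d\theta}{dz},$$
in which both right-hand terms are positive.

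Denote by $R$ the asymptotic radius ($\omega_{+}$ or $\bar{x}_{-}$) and by $(x_{1},z_{1})$ the starting point of the branch. Integration of the identity from $z_{1}$ to $\infty$ gives immediately
$$\frac{R^{2}-x_{1}^{2}}{2}\;\geq\;\int_{z_{1}}^{\infty}\frac{d\tau}{\dot{\varphi}(\tau)}.$$
For the converse estimate I use that $\ddot{\varphi}\geq 0$ (from \eqref{conditions}) makes $\dot{\varphi}$ nondecreasing, so $\dot{\varphi}(\tau)\geq\dot{\varphi}(z_{1})>0$; together with $x(\tau)\leq R$ and $\int_{z_{1}}^{\infty}(d\theta/dz)\,d\tau\leq \pi/2$ (since $\theta$ is monotone with total variation at most $\pi/2$), these bound the second integral of the identity by $R\pi/(2\dot{\varphi}(z_{1}))$, yielding
$$\frac{R^{2}-x_{1}^{2}}{2}\;\leq\;\int_{z_{1}}^{\infty}\frac{d\tau}{\dot{\varphi}(\tau)}+\frac{R\,\pi}{2\,\dot{\varphi}(z_{1})}.$$
Since the right side is a linear function of $R$, this quadratic inequality forces $R<\infty$ precisely when $\int^{\infty}d\tau/\dot{\varphi}(\tau)<\infty$.

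From here the two items follow at once: in case~(1), $\dot{\varphi}(u)\leq C(1+u)$ gives $\int^{\infty}d\tau/\dot{\varphi}(\tau)\geq C^{-1}\int^{\infty}d\tau/(1+\tau)=+\infty$, so $\omega_{+}=\bar{x}_{-}=+\infty$; in case~(2), $\dot{\varphi}(u)\geq c\,u^{\alpha}$ with $\alpha>1$ gives $\int^{\infty}d\tau/\dot{\varphi}(\tau)<+\infty$, so $\omega_{+},\bar{x}_{-}<+\infty$. The step I expect to be most delicate is the upper bound on $\int x\,(d\theta/dz)/\dot{\varphi}\,dz$, which relies on both the monotonicity of $\dot{\varphi}$ (to factor out $1/\dot{\varphi}(z_{1})$) and the bounded variation of $\theta$; a minor preliminary in the bowl case is that the identity extends past the singular initial point $x_{1}=0$, which is fine because $x\,(d\theta/dz)$ remains bounded as $z\to z_{0}^{+}$ thanks to the expansion $u(x)=z_{0}+\dot{\varphi}(z_{0})x^{2}/4+O(x^{4})$ provided by Proposition~\ref{epro}.
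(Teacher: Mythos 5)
Your argument is correct, but it follows a genuinely different route from the paper's. The paper treats the two items separately: for item (1) it only uses the pointwise inequality $x\,\dot{\varphi}(u)>u'$ (convexity of the profile) together with $\dot{\varphi}(u)\le c\,u$ outside a compact set, and integrates $x\ge \frac{1}{c}\,u'/u$ to get $\frac{x^2}{2}-\frac{x_0^2}{2}\ge \frac{1}{c}\log\bigl(u(x)/u(x_0)\bigr)$, so $u$ cannot blow up at a finite radius; for item (2) it argues by contradiction with the auxiliary function $f(r)=u'/(M u^{\alpha})$, first producing a sequence with $f(r_n)\to 0$ and showing $f(r)/r\to 0$, then deriving the Riccati-type inequality $2f'/(1+f^{2})>1$, which upon integration is incompatible with $\omega_+=+\infty$. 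You instead parametrize the relevant branch by the height $z$ and integrate the exact identity $\frac{d}{dz}\bigl(\frac{x^{2}}{2}\bigr)=\frac{1}{\dot{\varphi}(z)}+\frac{x}{\dot{\varphi}(z)}\frac{d\theta}{dz}$, which yields the clean two-sided criterion that the asymptotic radius ($\omega_+$ or $\overline{x}_-$) is finite if and only if $\int^{\infty}\frac{d\tau}{\dot{\varphi}(\tau)}<\infty$; both items of the proposition then drop out, the two branches are handled uniformly, and your criterion even accounts directly for the example $\dot{\varphi}(u)=u\log u$ in the remark following the proposition (where the paper must argue separately). One small point of rigor: when $R$ is not yet known to be finite, your inequality $\frac{R^{2}-x_1^{2}}{2}\le \int_{z_1}^{\infty}\frac{d\tau}{\dot{\varphi}(\tau)}+\frac{R\,\pi}{2\dot{\varphi}(z_1)}$ is vacuous ($\infty\le\infty$); you should state it at finite height $Z$, bounding $x(\tau)\le x(Z)$ on $[z_1,Z]$ (legitimate since $dx/dz=\cot\theta>0$), which gives a uniform quadratic bound on $x(Z)$ and hence $R<\infty$ whenever the integral converges — an immediate fix that does not affect the structure of your proof. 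Your handling of the singular vertex of the bowl is also fine, and could even be sidestepped by starting the integration at any $z_1>z_0$.
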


\begin{proof}

\

\noindent If $\dot{\varphi}$ has at most a linear growth, then there must be a constant $c>0$ such that $\dot{\varphi}(u)/u\leq c$ outside a compact set. Thus,  from the inequality \eqref{des1}, when  $x$ is large enough the following inequalities hold,
\begin{equation}
\label{ineq}
x\geq \frac{u^\p}{\dot{\varphi}(u)}(x)\geq \frac{1}{c} \frac{u^\p}{u}(x).
\end{equation}
Integrating both members of the inequality \eqref{ineq}, we get that,
\begin{equation}
\frac{x^{2}}{2}-\frac{x_{0}^{2}}{2}\geq\frac{1}{c}\, \text{log}\left(\frac{u(x)}{u(x_{0})}\right) \emph{ for some } x_{0}>0.
\end{equation} 
Hence, $\omega_{+}
=+\infty$ and $\overline{x}_-=+\infty$.

\

\noindent Let's go to consider now  that 
$$\lim_{u\rightarrow +\infty}\frac{\dot{\varphi}(u)}{u^{\alpha}} = M\neq 0\quad \text{for some $\alpha>1$},$$ and suppose  that $\omega_{+}=+\infty$. Then, from the Theorem \ref{bowl} and Theorem \ref{existencecatenoid}, the real function $f$ given by,
$$
f(r):=\frac{u'(r)}{M\, u^\alpha(r)}
$$
has, for $r$ large enough, a bounded and  strictly monotone primitive $F(u)(r)$.   
Hence,  there exists a sequence $\{ r_{n} \}\nearrow +\infty$ such that 
\begin{equation}\label{lim0}
\lim_{n\rightarrow \infty} f(r_{n}) =  0.
\end{equation}
\begin{afirmacion}\label{cl0} The function $f$ satisfies that 
$\lim_{r\rightarrow \infty} \frac{f(r)}{r} =  0$.
\end{afirmacion}
\begin{proof}[Proof of  Claim \ref{cl0}]
Assuming on the contrary, there exists $\delta>0$ and a sequence $\{s_n\}\nearrow+\infty$ such that 
$$f(s_n) > \frac{f(s_n)}{s_n} > \delta,$$
which together \eqref{lim0}, says that  $f^{-1}(\delta)$ is unbounded real subset containing  a  divergent sequence to $+\infty$.

But, from the equation \eqref{equationrot}, the function $f$ satisfies the following differential equation
\begin{equation}
\label{equdiff}
f^{\,\p} =\left(\frac{\dot{\varphi}(u)}{M\, u^{\alpha}}-\frac{f(r)}{r} \right)+M^2\, f^{2}\, u^{2 \alpha}\, \left(\frac{\dot{\varphi}(u)}{M\, u^{\alpha}}-\frac{f}{r} -\frac{\alpha}{M}u^{-\alpha+1} \right)
\end{equation}
and we obtain that there exists $\hat{r}\in f^{-1}(\delta)$ such that $f^{\,\p}(r)>1$  for any $r\in f^{-1}(\delta)$, $r\geq\hat{r}$,  which is impossible because $f^{-1}(\delta)$ is unbounded.
\end{proof}

From \eqref{equdiff}, Claim \ref{cl0} and using that $u$ diverges to $+\infty$ we get that, for $r$ sufficiently large, the following inequality holds ,
\begin{equation}
\label{inecon}
\frac{2f^{\,\p}}{1+f^{2}} >1.
\end{equation}
By  integration of this expression,
we conclude  that $\omega_{+}<+\infty$.
\end{proof}
\begin{remark}{\rm 
Notice that $\omega_{+}=+\infty$ does not imply that $\dot{\varphi}$ has at most a linear growth. For example, by taking $\dot{\varphi}(u)=u\,\text{log}(u)$ with $u\geq 1$ and by  the integration of both members in  \eqref{des1}, we get that,
$$\frac{x^{2}}{2}-\frac{x_{0}^{2}}{2}\geq \text{log}\left(\text{log} \left(\frac{u(x)}{u(x_{0})}\right)\right) \emph{ for some } x_{0}>0.$$
Thus, $\omega_{+}=+\infty$ but the function $\log(u)$ is not bounded.}
\end{remark}

\section{Asymptotic behavior of rotational examples}  
Clutterbuck, Schn\"urer and Schulze studied  in \cite{CSS} the asymptotic behavior of solitons rotationally symmetric. They proved that the problem 
\begin{equation}
\label{ecuacion}
\left\lbrace
\begin{array}{ll}
u^\pp=(1+u^{\p 2})\left(1-\frac{u^\p}{r}\right), &r > R,\\
u(R)=u_{0}\in \R, & u^\p(R)=u_1\in \R. 
\end{array}
\right.
\end{equation}
has a unique ${\cal C}^\infty$-solution $u$ on $[R,\infty[$. Moreover, as $r \rightarrow \infty$, $u$ has the following  asymptotic
expansion
$$u(r)=\frac{r^{2}}{2}-\text{log}(r)+ {O}(r^{-2}).$$

Due to the arbitrariness of the problem \eqref{equationrot}  it is impossible  to find  a general  asymptotic behavior of their solutions because if you consider any strictly convex smooth function $u=u(r)$, $r>R$, one can find a function $\varphi$ such that $u$ is a solution of \eqref{equationrot}.

Proposition \ref{intervalodef} motivates to consider  $\varphi:]a,+\infty[\longrightarrow \R$ a smooth  function satisfying \eqref{conditions} and with  a quadratic growth, that is,  with the following  asymptotic behavior,
\begin{align}\label{clinear}
\lim_{u\rightarrow \infty} \ddot{\varphi}(u) = \alpha \geq 0 \quad \text{and} \quad \lim_{u\rightarrow \infty} (\dot{\varphi}(u) - \alpha\, u) = \beta\in\mathbb{R}.
\end{align}
In this case,  we  are going to generalize the  result in \cite{CSS} to the following problem,
\begin{equation}
\label{ab}
\left\lbrace
\begin{array}{ll}
u^\pp=(1+u^{\p 2})\left( \dot{\varphi}(u)-\frac{u^\p}{r}\right), &r > r_0\geq 0,\\
u(r_0)=u_{0}> a, & u^\p(r_0)=u_1\geq 0, 
\end{array}
\right.
\end{equation}
with   ${\varphi}:]a,\infty[\longrightarrow \R$ satisfying  \eqref{conditions} and \eqref{clinear}.
\begin{remark}\label{betapositive}
Observe that if $\alpha>0$, then $u$ is solution of \eqref{equationrot}
if and only if $ v= u + \frac{\beta-\widetilde{\beta}}{\alpha}$ is  solution of 
$$ v^\pp=(1+v^{\p 2})\left( \dot{\psi}(v)-\frac{v^\p}{r}\right)$$
where 
$ {\psi}(v) = {\varphi}\left(v-\frac{\beta-\widetilde{\beta}}{\alpha}\right)$ satisfies
\begin{align*}
\lim_{v\rightarrow \infty} \ddot{\psi}(v) = \alpha \geq 0 \quad \text{and} \quad \lim_{v\rightarrow \infty} (\dot{\psi}(v) - \alpha\, v) = \widetilde{\beta}.
\end{align*}
It is also clear that $\frac{v^\p}{\dot{\psi}(v)} = \frac{u^\p}{\dot{\varphi}(u)}$.
\end{remark}
\begin{theorem}[{\bf Case $\alpha>0$}]
\label{comportamientoasin}
Assume  that $\dot{\varphi}(u_0)\, r_0 \geq u_1$ and $\alpha>0$.  Then the problem \eqref{ab} has an unique strictly convex ${\cal C}^\infty$-solution $u$ on $[r_0,\infty[$. Moreover, as $r \rightarrow \infty$, we have the following asymptotic expansion:
\begin{align}
&\dot{\varphi}(u)(r)=e^{\frac{1}{2}\alpha r^{2}+o\left(r^2\right) }\\
&\frac{u^\p}{\dot{\varphi}(u)} (r)=r- \alpha\, r\, \dot{\varphi}(u)^{-2}(r)+o\left(r\dot{\varphi}(u)^{-2}(r)\right),
\label{control}
\end{align}
\end{theorem}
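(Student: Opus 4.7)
The plan is to proceed in four stages that build on one another. \textbf{Stage 1 (existence, uniqueness, convexity, global existence).} Local existence and uniqueness of a $\mathcal{C}^\infty$ solution follow from standard ODE theory since the right-hand side of \eqref{ab} is smooth for $r > 0$ and $u > a$. Let $[r_0, R)$ be the maximal interval. The hypothesis $\dot\varphi(u_0)r_0 \geq u_1$ says the auxiliary function $g(r) := r\dot\varphi(u) - u'$ satisfies $g(r_0) \geq 0$, and whenever $g$ vanishes one has $u'' = (1+u'^2)(\dot\varphi - u'/r) = 0$ and therefore
$$g'(r) = \dot\varphi(u) + r\ddot\varphi(u)u' - u'' = \dot\varphi(u) + r\ddot\varphi(u)u' > 0.$$
Hence $g \geq 0$ throughout, giving $u' \leq r\dot\varphi(u)$ and $u'' \geq 0$ (strict convexity, with $u'' > 0$ on $(r_0,\infty)$). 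Combined with $u' \geq u_1 \geq 0$, $u$ is nondecreasing. If $R < \infty$ then a routine prolongation argument forces $u \to \infty$ at $R$; but $u' \leq R\dot\varphi(u)$ yields $\int_{u_0}^{\infty} ds/\dot\varphi(s) \leq R(R-r_0) < \infty$, contradicting $\dot\varphi(u) \sim \alpha u$. So $R = \infty$, and a parallel argument at infinity forces $u(r) \to \infty$.

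\textbf{Stage 2 (leading asymptotic for $\dot\varphi(u)$).} Set $v := u'/\dot\varphi(u)$. Differentiating,
$$\bigl(\log \dot\varphi(u(r))\bigr)' = \ddot\varphi(u(r))\, v(r).$$
Stage 1 gives $v \leq r$, and $u\to\infty$ together with \eqref{clinear} gives $\ddot\varphi(u(r)) \to \alpha$. Integrating directly yields $\log\dot\varphi(u(r)) \leq \tfrac{1}{2}(\alpha+\varepsilon) r^2 + C_\varepsilon$, hence $\limsup 2\log\dot\varphi(u(r))/r^2 \leq \alpha$. The matching lower bound requires $v(r)/r \to 1$. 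I would obtain this from the gap function $\phi := r\dot\varphi(u) - u' = \dot\varphi(u)(r-v) \geq 0$, which satisfies the linear ODE
$$\phi'(r) + \frac{1+u'(r)^2}{r}\,\phi(r) = \dot\varphi(u(r)) + r\,\ddot\varphi(u(r))\,u'(r).$$
The coefficient of $\phi$ behaves asymptotically like $r\dot\varphi^2$ (using $u' \leq r\dot\varphi$ and the forthcoming $v \sim r$) while the source behaves like $\alpha r^2\dot\varphi$, so the natural quasi-equilibrium is $\phi_{\mathrm{eq}}(r) := \alpha r/\dot\varphi(u(r))$. A Gronwall-type comparison of $\phi$ against $\phi_{\mathrm{eq}}$ gives $\phi = O(r/\dot\varphi)$, equivalently $r - v = O(1/\dot\varphi^2) \to 0$. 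Feeding this back into $(\log\dot\varphi(u))' = \ddot\varphi\, v \geq (\alpha-\varepsilon)(r - o(1))$ provides the matching lower bound and completes the first part of \eqref{control}.

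\textbf{Stage 3 (refined asymptotic for $v$).} Once $v \sim r$ and $u' \sim r\dot\varphi$ are established, one sharpens the comparison of Stage 2: the ratio of source to coefficient in the linear ODE for $\phi$ satisfies
$$\frac{\dot\varphi + r\ddot\varphi\, u'}{(1+u'^2)/r} = \frac{r\,\dot\varphi + r^2\,\ddot\varphi\, u'}{1+u'^2} = \frac{\alpha r}{\dot\varphi(u)}\,(1+o(1)),$$
so $\phi(r) = \alpha r/\dot\varphi(u(r)) + o(r/\dot\varphi(u(r)))$ and hence
$$\frac{u'(r)}{\dot\varphi(u(r))} = r - \frac{\phi(r)}{\dot\varphi(u(r))} = r - \frac{\alpha r}{\dot\varphi(u(r))^2} + o\!\left(\frac{r}{\dot\varphi(u(r))^2}\right),$$
which is precisely \eqref{control}. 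The main obstacle is a technical one: to extract sharp constants the Gronwall/ODE comparison for $\phi$ must be carried out very carefully, since the coefficient $(1+u'^2)/r$ grows like $r e^{\alpha r^2}$ so the integrating factor is doubly exponential in $r$, and even a small error in the comparison with $\phi_{\mathrm{eq}}$ would destroy the sharp constant $\alpha$ in the final expansion. The payoff of using $\phi_{\mathrm{eq}}$ (rather than directly handling $v$) is that $\phi$ solves a \emph{linear} ODE, so one controls $\phi - \phi_{\mathrm{eq}}$ by estimating $(\phi-\phi_{\mathrm{eq}})'$ and invoking the sign of the coefficient to trap the deviation.
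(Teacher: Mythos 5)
Your Stages 1 and 3 are sound, and your gap function $\phi = r\dot{\varphi}(u) - u'$ is, up to sign and a factor $\dot{\varphi}(u)$, exactly the quantity the paper tracks ($\phi = -\dot{\varphi}(u)\,\mathcal{V}_1$), with your quasi-equilibrium $\alpha r/\dot{\varphi}(u)$ matching the content of Claim \ref{cl4}. The problem is Stage 2, which is where all the real work of the theorem lives: as written it is circular. To claim that the damping coefficient $(1+u'^2)/r$ behaves like $r\dot{\varphi}(u)^2$ you invoke ``the forthcoming $v\sim r$'', i.e. $u'\sim r\dot{\varphi}(u)$ --- but that is precisely the statement $\phi = o\bigl(r\dot{\varphi}(u)\bigr)$ that your Gronwall comparison is supposed to deliver. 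Stage 1 only gives the one-sided bound $0\le u'\le r\dot{\varphi}(u)$, so a priori $u'$ could lag far behind $r\dot{\varphi}(u)$, the damping could be as weak as $u'^2/r$, and then the source $\dot{\varphi}+r\ddot{\varphi}u'$ is not dominated and the comparison with $\phi_{\mathrm{eq}}$ does not close. This is exactly the step on which the paper concentrates: after normalizing $\beta>0$ (Remark \ref{betapositive}) it builds the explicit subsolution $\zeta_R$ with $\zeta_R'=g_\varepsilon\, r\dot{\varphi}(u)$ (Claim \ref{cl1}), shows $u'-\zeta_R'$ must become positive (Claim \ref{cl2}), and uses $\ddot{\varphi}\ge 0$ at a first touching point to get $u'\ge g_\varepsilon\, r\dot{\varphi}(u)$ eventually, whence $\mathcal{V}_1/r\to 0$.

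The gap is repairable inside your framework, but you must supply a non-circular preliminary step. Convexity plus $\dot{\varphi}(u)\to\infty$ forces $u'\to\infty$, hence $u(r)\ge r-C$ and $\dot{\varphi}(u(r))\ge c\,r$ for large $r$; then Duhamel for your linear ODE gives
\begin{equation*}
\phi(r)\;\le\; e^{-\int_{r_1}^{r}a}\,\phi(r_1)\;+\;\sup_{[r_1,r]}\frac{b}{a},
\qquad a=\frac{1+u'^2}{r},\quad b=\dot{\varphi}(u)+r\ddot{\varphi}(u)u',
\end{equation*}
and the crude bound $b/a\le r\dot{\varphi}(u)/u'^2+(\alpha+\varepsilon)r^2/u'$ already shows the right side is $o\bigl(r\dot{\varphi}(u)(r)\bigr)$, giving $v/r\to 1$ without assuming it; only then may you feed $u'\sim r\dot{\varphi}(u)$ back in to sharpen the comparison and extract the constant $\alpha$, which parallels the paper's Claims \ref{cl3} and \ref{cl4} (proved there by trapping arguments on $\mathcal{V}_1$ and on $\lambda=\frac{1}{r}\dot{\varphi}^2(u)\mathcal{V}_1$ via \eqref{ecuacionxi}, rather than by an integrating factor). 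Two smaller points: if $r_0=0$ the initial value problem is singular and ``standard ODE theory'' does not apply at $r=0$ --- you need Proposition \ref{epro}; on the other hand, once repaired, your route has the mild advantage of not requiring the normalization $\beta>0$ used for the paper's subsolution.
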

\begin{proof}
First of all, arguing as in  Theorem \ref{bowl}, Theorem \ref{existencecatenoid} and Proposition \ref{intervalodef} , \eqref{ab} has a unique ${\cal C}^\infty$-solution $u$ on $[r_0,\infty[$ which  is  strictly convex function satisfying that $\lim_{r\rightarrow\infty}u(r) = \infty$. Hence, from \eqref{equationrot},
\begin{equation}
\label{cp1}
r \, \dot{\varphi}(u)>u\,^\p, \qquad r\geq r_0.
\end{equation}
From Remark \ref{betapositive}, in order to study the asymptotic behavior of $\frac{u^\p}{\dot{\varphi}(u)}$, it is not a restriction to assume  that $\beta>0$. 

Take $\epsilon>0$ such that $\beta > 2 \varepsilon$, from \eqref{clinear} there exists $r_\varepsilon$ such that if $r\geq r_\varepsilon$,
\begin{equation}\label{desepsi}
-\varepsilon<\dot{\varphi}(u)(r) - \alpha\, u(r)-\beta<\varepsilon,  \quad -\varepsilon<\ddot{\varphi}(u)(r)-\alpha<\varepsilon.
\end{equation}
\begin{lemma}
\label{cl1} Consider for any $R>r_0$ the function, 
$$\zeta_R(r):=g_\varepsilon \left(u(R) + \int^{r}_{R}\,t \, \dot{\varphi}(u)(t)\, dt\right),  \quad r\geq R, \quad g_\varepsilon =\frac{\beta-2\varepsilon}{\beta+ \varepsilon}. $$ Then there exists $r_{1}\in \R$, depending only on $\varepsilon$, such that for any $R\geq r_1$, $ \zeta_{R}$ satisfies the following inequality,
\begin{equation}
\label{cp3}
\zeta_{R}^\pp<(1+\zeta_R^{\p {2}})\left(\dot{\varphi}(\zeta_R)-\frac{\zeta_R^\p}{r} \right), \qquad   r\geq R.
\end{equation}
\end{lemma}
\begin{proof} From the inequality \eqref{cp1}, $\zeta_R(r)>u(r)g_\varepsilon $. Hence, from \eqref{desepsi}, when $r$ is large enough, we have
\begin{equation}
\label{descp3}
\dot{\varphi}(\zeta_R)(r)>\alpha\, g_\varepsilon u(r)+\beta-\varepsilon.
\end{equation}
Using \eqref{cp1},  \eqref{desepsi} and by a straightforward computation,
\begin{equation}
\label{descp4}
\zeta_R^\pp(r)<g_\varepsilon \dot{\varphi}(u)(r)(1+(\alpha+\varepsilon) r^{2}), \qquad r\geq r_\varepsilon,
\end{equation}

On the other hand, from \eqref{descp3} and  \eqref{desepsi}, when $r\geq r_\varepsilon$, the following inequality holds,
\begin{equation}
\label{descp6}
(1+\zeta_R^{\p \,{2}})\left(\dot{\varphi}(\zeta_R)-\frac{\zeta_R^\p}{r} \right)>\varepsilon(1+\dot{\varphi} (u)^{2}\, r^{2}g_\varepsilon^2).
\end{equation}
Thus, \eqref{cp3} follows from \eqref{descp3}, \eqref{descp4}, \eqref{descp6} bearing in mind  that $u\rightarrow+\infty$ when $r\rightarrow +\infty$.
\end{proof}
\begin{lemma}\label{cl2} For any $R\geq r_0$ there exists $r_R\geq R$ such that $u^\p(r_R)-\zeta_R^\p(r_R)>0$. 
\end{lemma}
\begin{proof}
Assuming on the contrary,  if $ u^\p(r)-\zeta_R^\p(r)\leq0$ for any $r>R$, then the following inequalities holds,
$$\frac{u^\pp (r)}{1+u^{\p ^{2} }(r)}\geq \frac{3\varepsilon}{\beta + \varepsilon}\dot{\varphi}(u)(r)>\frac{3\varepsilon}{\beta + \varepsilon}\dot{\varphi}(u)(r_0),$$
Integrating, we can find a finite radius $\overline{r}$ such that $u'\rightarrow +\infty$ as $r\rightarrow \overline{r}$, getting a \textit{contraction} since the solution $u$ is defined for all $r>r_0$.
\end{proof}
Let's consider the function $d= u^\p - \zeta_R^\p$ on $[R,\infty[$. From Lemmas \ref{cl1} and \ref{cl2}, we can find $R\gg r_0$ verifying $u(R)>0$, $d(R)>0$ and such that  the inequality \eqref{cp3} holds. Hence, if
there exists a first $s\geq R$ such that $d(s)=0$ and $d^\p(s)<0$, we have
$$0>d^\p(s)=(1+u^\p(s)^{2})(\dot{\varphi}(u(s))-\dot{\varphi}(\zeta_R(s))).$$
On the other hand, as $d(r)>0$ for any $r\in]R,s[$ we have by integration of $d^\p$ that, 
$$u(s)>\zeta_R(s)+u(R)-\zeta_R(R)= \zeta_R(s) + \frac{3\varepsilon}{\beta +\varepsilon} u(R)>\zeta_R(s),$$
and  \eqref{conditions} gives that  $d^\p(s)>\dot{\varphi}(u(s))-\dot{\varphi}(\zeta_R(s))>0$ which is a \textit{contradiction}. 

Thus,  $d(r)>0$ for $r$  large enough and by using  the inequality \eqref{cp1}, we get,
\begin{equation}
\label{primeraaprox}
\frac{u^\p (r)}{\dot{\varphi}(u)(r)}=r+\mathcal{V}_1(r),  \qquad \text{with} \quad \lim_{r\rightarrow+\infty}\frac{\mathcal{V}_1(r)}{r}=0.
\end{equation}
Moreover, from the previous formula \eqref{primeraaprox} and  L'H\^{o}pital's rule, we also get that,
$$
\lim_{r\rightarrow +\infty} \frac{\log{\left(\dot{\varphi}^{2}(u(r))\right)}}{\alpha r^{2}}=1
$$
and $\dot{\varphi}(u)$ has the following asymptotic expansion,
\begin{equation}
\label{comporlog}
\dot{\varphi}(u)(r)=e^{\frac{1}{2}\alpha\, r^{2}+o(r^{2})}.
\end{equation}
\begin{lemma}\label{cl3}$\mathcal{V}_1\rightarrow 0$ as $r\rightarrow +\infty$.
\end{lemma}
\begin{proof} As  $\mathcal{V}_1$ is sublinear we have that for $r$ large enough,  $\vert\mathcal{V}_1(r)\vert<c\,r$ for all $c>0$.  Moreover,  from \eqref{ab} and the inequality \eqref{cp1}, $\mathcal{V}_1$ is a non-positive function and it satisfies the following differential equation,
\begin{equation}
\label{ecuacionxi}
\mathcal{V}_1^\p (r)=-\frac{\mathcal{V}_1(r)}{r}\left(1+\dot{\varphi}(u)^{2}(r)(r+\mathcal{V}_1(r))^{2} \right)-1-\ddot{\varphi}(u)(r)(r+\mathcal{V}_1(r))^{2}.
\end{equation}
Take   $\varepsilon>0$ and  $R\gg r_0$. If  $r\geq R$ and  $\mathcal{V}_1(r)\leq -\varepsilon$, from the sublinearity, we can suppose that $-r/2<\mathcal{V}_1(r)$ and,
\begin{equation}
\label{desigualdadessublinear}
\frac{r^{2}}{4}<(r+\mathcal{V}_1(r))^{2}<(c+1)^{2}r^{2}.
\end{equation}
Now, choosing $R$ large enough,   the equation \eqref{ecuacionxi} and the inequalities \eqref{desepsi} and \eqref{desigualdadessublinear} give,
\begin{equation}
\label{desxi}
\mathcal{V}_1^\p (r)\geq -1+\frac{\varepsilon}{r}+r\left(\frac{\varepsilon}{4}\dot{\varphi}(u)^{2}(r)-(\alpha+\varepsilon)(c+1)^{2}r \right).
\end{equation}
Using the conditions \eqref{conditions} and the asymptotic behavior  \eqref{comporlog}, $R$ may be  chosen large enough so that 
$$\dot{\varphi}(u)^{2}(r)\geq\frac{4}{\varepsilon}\left((\alpha+\varepsilon)(c+1)^{2}r+\frac{1}{r}\left(c+1-\frac{\varepsilon}{r}\right) \right),\quad r\geq R.$$
Thus, if $R$ is large enough and $r\geq R$ where  $\mathcal{V}_1(r)\leq -\varepsilon$, then  $\mathcal{V}_1^\p (r)\geq c>0$. Hence, $\mathcal{V}_1(r)\geq -\varepsilon$ for $r$ large enough and we conclude the proof.
\end{proof}
\begin{lemma}\label{cl4}  $\lim_{r\rightarrow +\infty}\frac{1}{ r}\dot{\varphi}^{2}(u)(r)\mathcal{V}_1(r) = -\alpha$. 
\end{lemma}
\begin{proof}  If $\lambda(r) = \frac{1}{r}\dot{\varphi}^{2}(u)(r)\mathcal{V}_1(r)$, then from \eqref{ab} and \eqref{primeraaprox} we have,  
\begin{align*}
\lambda^\p (r)=\dot{\varphi}^{2}(u)(r) &\left(2\mathcal{V}_1(r)\left(\ddot{\varphi}(u)(r)\left(1+\frac{\mathcal{V}_1(r)}{r}\right)-\frac{1}{r^2}\right)-\frac{1}{r}\right) \\
&+\dot{\varphi}^{2}(u)(r)\left(\frac{(r+\mathcal{V}_1(r))^{2}}{r}(-\ddot{\varphi}(u)(r)-\lambda(r))\right).
\end{align*}
Fix   $\varepsilon>0$ and  $R$ large enough.  Consider points $r\geq R$ where  $\lambda(r)\geq -\alpha+\epsilon$, then
\begin{equation}
\label{prides}
-\ddot{\varphi}(u)(r)-\lambda(r)\leq -\ddot{\varphi}(u)(r)+\alpha-\varepsilon
\end{equation}
and, if $R$ is large enough,  from  \eqref{clinear} and \eqref{prides},  we also get that,
$$-\ddot{\varphi}(u)(r)-\lambda(r)\leq -\frac{\varepsilon\alpha}{2}<0$$
and then  $\lambda^{\p}(r)< -1 $ when   $R$ is chosen sufficiently large. Hence, we obtain  that $\lambda(r)\leq -\alpha+\varepsilon$ for $r$ large enough.

In a similar way we may prove that $\lambda(r)\leq -\alpha-\varepsilon$ for $r$ sufficiently large.  
\end{proof}
Now \eqref{control} follows from \eqref{primeraaprox}, \eqref{comporlog} and Lemmas \ref{cl3} and \ref{cl4}.
\end{proof}

\begin{theorem}[{\bf Case $\alpha=0$}]
\label{alphazero}
Assume  that $\dot{\varphi}(u_0)\, r_0 \geq u_1$,  $\alpha=0$ and $\beta>0$.  Then the problem \eqref{ab} has an unique  strictly convex ${\cal C}^\infty$-solution $u$ on $[r_0,\infty[$. Moreover, if 
\begin{equation}
\label{condition2}
\lim_{u\rightarrow+\infty} u \, \ddot{\varphi}(u) = 0,
\end{equation}
we have the following asymptotic expansion:
\begin{equation}
\label{controlacero}
\frac{u^\p}{\dot{\varphi}(u)} (r)= r - \frac{1}{\beta^2\, r}+ o\left(r^{-1}\right),
\end{equation}
\end{theorem}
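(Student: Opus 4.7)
The plan is to parallel the proof of Theorem \ref{comportamientoasin}, modified for the regime $\alpha=0$. Existence, uniqueness, strict convexity of $u$ and $\lim_{r\to\infty}u(r)=+\infty$ on $[r_0,\infty[$ follow exactly as in the $\alpha>0$ case via Theorem \ref{bowl}, Theorem \ref{existencecatenoid} and Proposition \ref{intervalodef}; global existence in $r$ is granted because $\dot{\varphi}(u)\to\beta$ has sublinear (indeed bounded) growth. A structural difference is that $\dot{\varphi}$ is now bounded: since $\ddot{\varphi}\geq 0$ and $\dot{\varphi}(u)\nearrow\beta$, one has $\dot{\varphi}(u)\leq\beta$, hence $u'(r)\leq r\dot{\varphi}(u)\leq\beta r$ and $u(r)\leq \beta r^{2}/2+O(1)$.

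Writing $\mathcal{V}_{1}(r):= u'(r)/\dot{\varphi}(u(r))-r\leq 0$, I would first prove the sublinearity $\mathcal{V}_{1}=o(r)$ by adapting the barrier argument of Claim \ref{cl1}. With the same barrier $\zeta_{R}(r):=g_{\varepsilon}\bigl(u(R)+\int_{R}^{r} t\,\dot{\varphi}(u)(t)\,dt\bigr)$, the simplification in the present regime is that $\dot{\varphi}(\zeta_{R})\to\beta$ irrespective of the detailed growth of $\zeta_{R}$. Choosing $g_{\varepsilon}=1-\delta$ (with small $\delta,\varepsilon>0$), one obtains $\dot{\varphi}(\zeta_{R})-\zeta_{R}'/r\geq \delta\beta/2$ for large $r$, and the strict barrier inequality $\zeta_{R}''<(1+\zeta_{R}'^{2})(\dot{\varphi}(\zeta_{R})-\zeta_{R}'/r)$ then holds because $\ddot{\varphi}(u)\to 0$ (which follows from $\alpha=0$ alone, without invoking \eqref{condition2}). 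The comparison argument used after Claims \ref{cl1} and \ref{cl2} applies verbatim and gives $u'(r)>g_{\varepsilon}\,r\,\dot{\varphi}(u)(r)$ for all $r\geq R$; letting $g_{\varepsilon}\to 1$ yields $\mathcal{V}_{1}=o(r)$. Consequently $u'(r)\sim\beta r$ and $u(r)\sim\beta r^{2}/2$, so the hypothesis \eqref{condition2} upgrades to the crucial estimate $r^{2}\ddot{\varphi}(u(r))\to 0$.

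To capture the leading correction I would mimic Claim \ref{cl4} and introduce $\lambda(r):= r\,\dot{\varphi}(u)^{2}\,\mathcal{V}_{1}(r)$. A direct differentiation using \eqref{ecuacionxi} gives
\begin{equation*}
\lambda'(r)= 2\ddot{\varphi}(u)(r+\mathcal{V}_{1})\,\lambda \;-\; \frac{\dot{\varphi}^{2}(r+\mathcal{V}_{1})^{2}}{r}\,\lambda \;-\; r\dot{\varphi}^{2}\;-\;r\dot{\varphi}^{2}\,\ddot{\varphi}(u)(r+\mathcal{V}_{1})^{2}.
\end{equation*}
Using $\dot{\varphi}\to\beta$, $(r+\mathcal{V}_{1})^{2}=r^{2}(1+o(1))$, $r\ddot{\varphi}(u)=o(1)$ and $r^{3}\ddot{\varphi}(u)=o(r)$ (all consequences of $r^{2}\ddot{\varphi}(u)\to 0$), this reduces asymptotically to
\begin{equation*}
\lambda'(r)= -\beta^{2}r\,(\lambda+1)+o(r).
\end{equation*}
A barrier argument in the spirit of Claim \ref{cl4} then forces $\lambda\to -1$: in $\{\lambda\geq -1+\varepsilon\}$ (resp.\ $\{\lambda\leq -1-\varepsilon\}$) with $r$ large, one has $\lambda'(r)\leq-\beta^{2}\varepsilon r/2$ (resp.\ $\geq \beta^{2}\varepsilon r/2$), so $\lambda$ can neither remain in these regions nor re-enter them. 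Since $\dot{\varphi}^{2}\to\beta^{2}$, this translates into
\begin{equation*}
\mathcal{V}_{1}(r)=\frac{\lambda(r)}{r\,\dot{\varphi}(u)^{2}}=-\frac{1}{\beta^{2}r}+o\!\bigl(1/r\bigr),
\end{equation*}
which is exactly \eqref{controlacero}.

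The principal obstacle, compared with the $\alpha>0$ case, is that the \emph{amplification} provided by $\dot{\varphi}^{2}$ is here only the constant $\beta^{2}$, rather than the exponentially growing factor $e^{\alpha r^{2}}$. Consequently the margins in both the barrier inequality for $\zeta_{R}$ and the drift equation for $\lambda$ are merely linear in $r$, so the quantitative control of the $\ddot{\varphi}$-contribution supplied by \eqref{condition2} is essential for the refined asymptotic and cannot be dropped.
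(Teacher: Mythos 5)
Your proposal is correct and follows essentially the same route as the paper: the same reduction via $\mathcal{V}_1=\frac{u^\p}{\dot{\varphi}(u)}-r$ and the barrier Claims \ref{cl1}--\ref{cl2}, and your auxiliary function $\lambda=r\,\dot{\varphi}(u)^2\mathcal{V}_1$ is exactly the paper's $\mathcal{V}_2$, driven to $-1$ by the same Claim~\ref{cl4}-type argument using $r^2\ddot{\varphi}(u(r))\to 0$ from \eqref{condition2}. The only (harmless) difference is that you obtain $r\ddot{\varphi}(u)\to0$ and $r^{2}\ddot{\varphi}(u)\to0$ directly from $u\sim\beta r^{2}/2$ instead of L'H\^opital, and you bypass the intermediate statement $\mathcal{V}_1\to0$, which your $\lambda\to-1$ step subsumes.
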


\begin{proof}
Arguing as in  Theorem \ref{bowl}, Theorem \ref{existencecatenoid} and Proposition \ref{intervalodef} , \eqref{ab} has a unique ${\cal C}^\infty$-solution $u$ on $[r_0,\infty[$ which  is  strictly convex function satisfying that $\lim_{r\rightarrow\infty}u(r) = \infty$. Moreover, as  Lemmas \ref{cl1} and \ref{cl2} also work in this case, we have the following asymptotic expansion
\begin{equation}\label{expresionphiacero}
\frac{u^\p}{\dot{\varphi}(u)}(r)=r+\mathcal{V}_1(r),
\end{equation}
where $\mathcal{V}_1$ verifies the same differential equation \eqref{ecuacionxi}, is also nonpositive and $\mathcal{V}_1(r)\rightarrow 0$. Moreover, from  \eqref{clinear}, $\dot{\varphi}$ writes as 
\begin{equation}
\dot{\varphi}(u)(r)=\beta+o(1). 
\end{equation}
Consider now  the new function $\mathcal{V}_2(r)=r\, \dot{\varphi}^{2}(u)(r)\mathcal{V}_1(r)$. Then
\begin{align*}
\mathcal{V}_2^\p =r \, \dot{\varphi}^{2} \left(2\ddot{\varphi}\mathcal{V}_1(r+\mathcal{V}_1)-1 +\frac{(r+\mathcal{V}_1)^2}{r^2}(-r^{2}\, \ddot{\varphi}-\mathcal{V}_2)\right).
\end{align*}
From the expressions \eqref{condition2}, \eqref{expresionphiacero}  and L'H\^{o}pital's rule, we have
\begin{equation}
\label{limitesacero}
\lim_{r\rightarrow +\infty}\ddot{\varphi}(u(r))\, r=0 \emph{ and } \lim_{r\rightarrow +\infty}\ddot{\varphi}(u(r))\, r^{2}=0,
\end{equation}
and working as in Lemma \ref{cl4} we can prove  that $\mathcal{V}_2(r)\rightarrow -1$. 
Finally, the Theorem follows from the expansion \eqref{expresionphiacero} as $r\rightarrow +\infty$.
\end{proof}
\subsection{{\sc Proof of Theorem A}}
If  $\alpha>0$,  from \eqref{primeraaprox} and \eqref{comporlog} we can write,
\begin{equation}
\label{derUpsilonexpli}
\log (\dot{\varphi}(u)) (r) =  \frac{\alpha r^2}{2} +\Upsilon(r),
\end{equation}
where $\Upsilon^\p = (\ddot{\varphi}-\alpha) r + \ddot{\varphi}  {\cal V}_1$. Hence, as the  first non-vanishing $a_k$ is positive, for $r$ large enough $\Upsilon$ is a decreasing function in $r$ such that $-\infty< c= \lim_{r \rightarrow +\infty} \Upsilon(r)$ otherwise from Lemma \ref{cl4},  \eqref{series2},  \eqref{derUpsilonexpli} and by using L'H\^{o}pital's rule, we have that,
\begin{align*} \label{limiteupexpli}
+\infty&=\lim_{r\rightarrow+\infty}\dot{\varphi}^2(u)(r) = \lim_{r\rightarrow+\infty}\frac{e^{2\Upsilon}}{e^{-\alpha r^{2}}}
=\lim_{r\rightarrow+\infty}\frac{\left(e^{2\Upsilon}\right)^\p}{\left(e^{-\alpha r^{2}}\right)^\p}\\
&=-\lim_{r\rightarrow+\infty}\frac{\dot{\varphi}(u)^2(r) \left((\ddot{\varphi}(u)(r)-\alpha) r + \ddot{\varphi}(u)(r)  {\cal V}_1(r)\right)}{\alpha r}=\alpha \, a_1,
\end{align*}
which is a contradiction. 

Applying again L'H\^{o}pital's rule to $\lim_{r\rightarrow+\infty}\frac{e^{2\Upsilon}- e^{2 c}}{e^{-\alpha r^{2}}}$, we have   
$$ \dot{\varphi}^2(u)(r)  = e^{\alpha r^2 + 2c} + O(1) \quad \text{and}\quad  \lim_{r\rightarrow+\infty}O(1)=\alpha \, a_1.$$
Thus, from  Lemma \ref{cl4} and Theorem \ref{comportamientoasin}
$$ \varphi(u)^\p(r)= r  e^{\alpha r^2 + 2c} + \alpha a_1 r + o(r) ,$$
and  \eqref{applineal} follows by integration of the above  expression.

\

If $\alpha=0$ then, the condition \eqref{condition2} follows from  \eqref{expresionphiacero} and we have that
\begin{equation}
\frac{u^\p}{\dot{\varphi}(u)} (r)= r - \frac{1}{\beta^2\, r} + o\left(r^{-1}\right).
\end{equation}
Now, by taking  $ \mathcal{V}_3(r) = (\mathcal{V}_2(r) +1) r^2 $ we get
\begin{align*}
\mathcal{V}_3^\p &= \frac{2\mathcal{V}_3}{r} +  r ^3\, \dot{\varphi}^{2} \left(2\ddot{\varphi}\mathcal{V}_1(r+\mathcal{V}_1)-1 +\frac{(r+\mathcal{V}_1)^2}{r^2}(-r^{2}\, \ddot{\varphi}+ 1-\frac{\mathcal{V}_3}{r^2})\right)\\
&=r \dot{\varphi}^{2} \left(\frac{2\mathcal{V}_3}{\dot{\varphi}^{2} r^2} +  2r^4 \ddot{\varphi}\frac{\mathcal{V}_1(r+\mathcal{V}_1)}{r^2}-r^2 +\frac{(r+\mathcal{V}_1)^2}{r^2}(-r^{4}\, \ddot{\varphi}+ r^2-\mathcal{V}_3)\right)\\
&=r \dot{\varphi}^{2} \frac{(r+\mathcal{V}_1)^2}{r^2}\left(-r^{4}\ddot{\varphi}+ r^2\left(1-\frac{r^2}{(r+\mathcal{V}_1)^2}\right)-\mathcal{V}_3\right) \\
&+ r \dot{\varphi}^{2} \left(\frac{2\mathcal{V}_3}{\dot{\varphi}^{2} r^2} +  2r^4 \ddot{\varphi}\frac{\mathcal{V}_1(r+\mathcal{V}_1)}{r^2}\right).
\end{align*}
But, from   \eqref{expresionphiacero} and L'H\^{o}pital's rule, we obtain
\begin{align*}
&\lim_{r\rightarrow +\infty}\ddot{\varphi}(u(r))\, r^4=-\frac{4 a_1}{\beta^2},\\
&\lim_{r\rightarrow +\infty}r^2\left(1-\frac{r^2}{(r+\mathcal{V}_1)^2}\right) = -\frac{2}{\beta^2}
\end{align*}
thus, by working as in Lemma \ref{cl4} we prove that 
$$\lim_{r\rightarrow\infty}\mathcal{V}_3(r) = \frac{-2 + 4a_1}{\beta^2}.$$
Hence,
\begin{equation*}
\frac{u^\p}{\dot{\varphi}(u)} (r)= r - \frac{1}{\beta^2\, r}-  \frac{2 -4 a_1}{\beta^4\, r^3} + o\left(r^{-3}\right),
\end{equation*}
and \eqref{casoalphacero} follows from integration in the above expression.
\section{Uniqueness of  globally convex solutions}
Along this section  $\varphi:]a,+\infty[\longrightarrow \R$ will be  a regular  function satisfying the expansion \eqref{series2}.

\

\noindent For any $\theta\in [0,2\pi[$ we consider $\vec{v}=(\cos\theta,\sin\theta,0)$ and denote by $\Pi_{\vec{v}}(t)$ the vertical plane
\begin{equation}\label{plane}
\Pi_{\vec{v}}(t) = \{ p\in \R^3 \, | \, \langle p,\vec{v}\rangle  = t\}
\end{equation}
\begin{definition}{\rm
Let $\Sigma_1$ and $\Sigma_2$ be two arbitrary subsets of $\mathbb{R}^{3}$. We say that $\Sigma_1$ is on the right hand side of $\Sigma_2$ respect to  $\Pi_{\vec{v}}(t)$ and write $\Sigma_1\ge_{\vec{v}}\Sigma_2$ if and only if for every point $q\in\Pi_{\vec{v}}(t)$ such that,
$$\pi^{-1}(q)\cap \Sigma_1\neq\emptyset \quad \text{and} \quad  \pi^{-1}(q)\cap \Sigma_2\neq\emptyset,$$
we have the following inequality, 
$$\inf \{\langle p,\vec{v}\rangle \,:\,  p\in \pi^{-1}(q)\cap \Sigma_1 \} \geq \sup\{\langle p,\vec{v}\rangle \,:\, p\in\pi^{-1}(q)\cap \Sigma_2\},$$
where $\pi:\mathbb{R}^{3}\rightarrow\Pi_{\vec{v}}(t)$ denotes the orthogonal projection on $\Pi_{\vec{v}}(t)$.}
\end{definition}

For an arbitrary subset $M$ of $\mathbb{R}^{3}$ we also consider the following subsets:
\begin{align*}
&\Sigma_+(t):=\{p\in M \,:\,\langle p,\vec{v}\rangle \, \geq t\}.\\
&\Sigma_-(t):=\{p\in M\,:\,\langle p,\vec{v}\rangle \, \leq t\}.\\
&\Sigma_+^{*}(t):=\{p + 2(t-\langle p,\vec{v}\rangle )\vec{v}\in\mathbb{R}^{3}\,:\, p\in \Sigma_+(t)\}.\\
&\Sigma_-^{*}(t):=\{p + 2(t-\langle p,\vec{v}\rangle )\vec{v}\in\mathbb{R}^{3}\,:\, p\in \Sigma_-(t)\}.
\end{align*}
From Theorem A, it is natural to study $[\varphi,\vec{e}_{3}]$-minimal surfaces whose behavior at infinity is of rotational type. To be more precise, 
\begin{definition}{\rm 
We say that a  $[\varphi,\vec{e}_{3}]$-minimal end  $\Sigma$  is {\sl smoothly asymptotic} to a  rotational-type example if  $\Sigma$ can be expressed outside a ball as a vertical graph of  a function $u_\Sigma$ so that, according to $\alpha$  is either positive or zero,  one of the following expressions holds
\begin{equation}\label{defasym}
\varphi (u_{\Sigma})(x)= C \,e^{\alpha \,|x|^2}  + O\left(|x|^2\right),\quad \text{if}\quad \alpha>0,
\end{equation}
where $C$ is a positive constant or up to a constant,
\begin{equation}\label{defasym2}
{\cal G}(u_{\Sigma})(x)= \frac{|x|^2}{2}-\frac{1}{\beta^2}\log(|x|)+ {O}\left(|x|^{-2}\right),
\end{equation}
if $\alpha =0$ and $\beta>0$.}
\end{definition}
Let $\Sigma$ be an embedded  $[\varphi,\vec{e}_{3}]$-minimal surface  $\Sigma$  with a single end smoothly asymptotic to a  bowl-type example. Then, there exists $R>0$ large enough such that $\Sigma\cap(\mathbb{R}^{3}\backslash B(0,R))$ is the vertical graph of a function $u_{\Sigma}$ verifying either  \eqref{defasym} if $\alpha>0$ or \eqref{defasym2} if $\alpha=0$ and $\beta>0$.
\begin{lemma}\label{lema1}
There exists $r_1>R$  such that if $t>r_1$ then $\Sigma_+(t) $ is a graph over $\Pi_{\vec{v}}(t)$.
\end{lemma}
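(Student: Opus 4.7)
The plan is to exploit the asymptotic graph structure of $\Sigma$ together with the quantitative expansion \eqref{defasym} or \eqref{defasym2} to show that, in a suitably rotated coordinate frame, $\partial_x u_\Sigma>0$ on the half-space $\{x\ge t\}$. Since $\varphi$ depends only on the third coordinate, the $[\varphi,\vec{e}_3]$-minimal equation is invariant under rotations around the vertical axis, so I may assume $\vec{v}=\vec{e}_1$. Then $\Pi_{\vec{v}}(t)=\{x=t\}$ and the orthogonal projection reads $\pi(x,y,z)=(t,y,z)$. For any $t>R$ and any $p=(x,y,z)\in\Sigma_+(t)$ one has $|p|\ge x\ge t>R$, so $p$ lies in the graph region and can be written uniquely as $p=(x,y,u_\Sigma(x,y))$ with $x\ge t$. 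Consequently $\Sigma_+(t)$ is a graph over $\Pi_{\vec{v}}(t)$ if and only if, for every fixed $y\in\R$, the map $x\mapsto u_\Sigma(x,y)$ is injective on $[t,\infty)$; this in turn will follow from $\partial_x u_\Sigma(x,y)>0$ on $\{x\ge t\}\times\R$.

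To derive this sign I would differentiate the asymptotic expansions. In the case $\alpha>0$, applying $\partial_x$ to \eqref{defasym} yields
\begin{equation*}
\dot\varphi(u_\Sigma)\,\partial_x u_\Sigma = 2C\alpha\,x\,e^{\alpha(x^2+y^2)}+O\bigl(\sqrt{x^2+y^2}\bigr).
\end{equation*}
For $x\ge t>0$ and arbitrary $y$ we have $\sqrt{x^2+y^2}\ge x\ge t$, and the exponential factor dominates any polynomial in $\sqrt{x^2+y^2}$ once $t$ is large; since $\dot\varphi(u_\Sigma)>0$ by \eqref{conditions}, this forces $\partial_x u_\Sigma>0$ throughout $\{x\ge t\}\times\R$. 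In the case $\alpha=0$, differentiating \eqref{defasym2} gives
\begin{equation*}
\frac{\partial_x u_\Sigma}{\dot\varphi(u_\Sigma)}=x\Bigl(1-\frac{1}{\beta^2(x^2+y^2)}\Bigr)+O\bigl((x^2+y^2)^{-3/2}\bigr),
\end{equation*}
which is again positive on $\{x\ge t\}\times\R$ once $t$ is sufficiently large. Taking $r_1>R$ to absorb all such smallness requirements yields the conclusion.

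The main obstacle I foresee is justifying that the error terms in \eqref{defasym}-\eqref{defasym2} may be differentiated with the gain in decay used above; this is where the word \emph{smoothly} in the definition of smooth asymptoticity has to be turned into a quantitative statement. I would handle it by writing $u_\Sigma=u+w$, where $u$ is the rotational model supplied by Corollary \ref{comportamietoasinlinealexpli}, and observing that $w$ satisfies a linear uniformly elliptic equation obtained by linearising \eqref{fe} along the one-parameter family $u+sw$. The zeroth-order decay of $w$ provided by \eqref{defasym}-\eqref{defasym2} together with standard interior Schauder estimates, applied on annuli at infinity and rescaled appropriately, then produce the needed decay of $\partial_x w$; combined with the explicit derivatives of the rotational profile computed in Corollary \ref{comportamietoasinlinealexpli}, this rigorously justifies the termwise differentiation carried out above and closes the argument.
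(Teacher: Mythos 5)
Your proposal is correct and follows essentially the same route as the paper: after reducing to $\vec{v}=\vec{e}_1$, you differentiate the expansions \eqref{defasym} and \eqref{defasym2} in the $\vec{v}$-direction, observe that the main term dominates so that $(du_\Sigma)(\vec{v})>0$ on $\{\langle x,\vec{v}\rangle\ge r_1\}$, and conclude the graph property, exactly as in the paper's argument (which finishes via embeddedness of $\Sigma$ rather than slice-wise injectivity, a cosmetic difference). Your extra remark on justifying termwise differentiation of the error via Schauder estimates is a careful touch; the paper simply absorbs this into the meaning of ``smoothly asymptotic''.
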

\begin{proof}
It is clear that when $t>R$,  $\Sigma_+(t)$ has only one component which is unbounded. Moreover, if $\alpha>0$ then from \eqref{defasym}, 
$$ \dot\varphi(u_\Sigma)(x)( du_\Sigma)_x(\vec{v}) \geq  2  \alpha\, e^{\alpha \,|x|^2} \langle x,\vec{v}\rangle \left( C +e^{-\alpha \,|x|^2} g(|x|)\right),$$
where $$\lim_{|x|\rightarrow}\frac{g(|x|)}{|x|^2}=0.$$ Hence, there exists $r_1$ large enough such that if $\langle x,\vec{v}\rangle \, \geq r_1$, then $(du_\Sigma)_x(\vec{v})>0$ and, in this case,  the Lemma follows because $\Sigma$ is embedded and $\Sigma_+(r_1) \cup \pi(\Sigma_+(r_1))$ bounds a domain in $\R^3$.

When $\alpha=0$ a similar argument with \eqref{defasym2} also works.
\end{proof} 

From Lemma \ref{lema1}, fixed $t>r_1$, $\Sigma^*_+(t)\cap \{p\in \R^3 : \langle p,\vec{e}_3\rangle> R\}$ is the vertical graph of the function   satisfying 
\begin{equation}\label{ustar}
u^{*}_{t}(x)=u_\Sigma(x + 2(t-\langle x,\vec{v}\rangle )\vec{v})
\end{equation}
\begin{lemma}\label{lema2}
Consider  $a>0$ not depending on $R$ and $\epsilon_0>0$. Then, for $R$ large enough and  $t> a + \langle x,\vec{v}\rangle $,  we have
$$ u^{*}_{t}(x) - u_\Sigma(x) >\epsilon_0>0.$$
\end{lemma}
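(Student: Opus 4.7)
The plan is to translate $u^{*}_{t}(x)-u_\Sigma(x)>\epsilon_0$ into an estimate on $\varphi\circ u_\Sigma$ (when $\alpha>0$) or on $\mathcal{G}\circ u_\Sigma$ (when $\alpha=0$), then apply the asymptotic expansions \eqref{defasym}--\eqref{defasym2}. Set $s=\langle x,\vec{v}\rangle$ and $x'=x+2(t-s)\vec{v}$; a direct computation gives $|x'|^{2}-|x|^{2}=4t(t-s)\geq 4at$, so $|x'|>|x|\geq R$, the point $x'$ lies in the graph region of $\Sigma$, and by \eqref{ustar} we have $u^{*}_{t}(x)=u_\Sigma(x')$. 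Since $\varphi$ and $\mathcal{G}$ are strictly increasing, the conclusion is equivalent to $\varphi(u_\Sigma(x'))-\varphi(u_\Sigma(x))>\varphi(u_\Sigma(x)+\epsilon_0)-\varphi(u_\Sigma(x))$, respectively the analogous inequality with $\mathcal{G}$ in place of $\varphi$.

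In the case $\alpha>0$, using \eqref{defasym} with an error bound $|\varphi(u_\Sigma(y))-Ce^{\alpha|y|^{2}}|\leq M|y|^{2}$ for $|y|\geq R$, together with the Taylor inequality $e^{z}-1\geq z+z^{2}/2$ for $z\geq 0$, one gets
\[
\varphi(u_\Sigma(x'))-\varphi(u_\Sigma(x))\geq Ce^{\alpha|x|^{2}}\bigl(4\alpha a t+8\alpha^{2}a^{2}t^{2}\bigr)-M(|x'|^{2}+|x|^{2}).
\]
Bounding $|x'|^{2}\leq|x|^{2}+4t^{2}+4t|x|$ keeps the remainder polynomial in $t$ and $|x|$, while the principal term is exponential in $|x|^{2}$ and polynomially growing in $t$, so for $R$ large the principal term dominates the remainder uniformly in $t>r_{1}$. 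Finally, \eqref{series2} combined with \eqref{defasym} yields $u_\Sigma(x)\sim\sqrt{2C/\alpha}\,e^{\alpha|x|^{2}/2}$, hence $\epsilon_0\dot\varphi(u_\Sigma(x)+\epsilon_0)=O(e^{\alpha|x|^{2}/2})$; this is exponentially smaller than $e^{\alpha|x|^{2}}$, so the required inequality holds for $R$ large.

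In the case $\alpha=0$, $\beta>0$, \eqref{defasym2} gives
\[
\mathcal{G}(u_\Sigma(x'))-\mathcal{G}(u_\Sigma(x))\geq\frac{|x'|^{2}-|x|^{2}}{2}-\frac{1}{2\beta^{2}}\log\left(1+\frac{4t(t-s)}{|x|^{2}}\right)-O(R^{-2}),
\]
and bounding the logarithm by $\log(1+u)\leq u$ makes the correction at most $2t(t-s)/(\beta^{2}|x|^{2})$, which for $|x|\geq R$ large is a small fraction of $2t(t-s)$. Thus the lower bound reduces to at least $at-O(R^{-2})$. On the other hand $\dot\varphi\to\beta$ as $u\to\infty$, so $\mathcal{G}(u_\Sigma(x)+\epsilon_0)-\mathcal{G}(u_\Sigma(x))\to\epsilon_0/\beta$ as $|x|\to\infty$, and choosing $R$ (hence $r_{1}$) large enough with $a r_{1}>\epsilon_0/\beta$ finishes this case.

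The main obstacle is keeping the estimates uniform in $x$ and $t$ simultaneously, since the $O$-remainders in \eqref{defasym}--\eqref{defasym2} involve $|x'|$ which itself depends on $t$. This is overcome by two different mechanisms: an exponential slack $e^{\alpha|x|^{2}}$ versus $e^{\alpha|x|^{2}/2}$ in the case $\alpha>0$, and a linear-in-$t$ slack $at$ against the bounded right-hand side $\epsilon_0/\beta$ in the case $\alpha=0$.
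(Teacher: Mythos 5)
Your proof is correct and follows essentially the same route as the paper: reduce the statement to the expansions \eqref{defasym}--\eqref{defasym2} through the reflection identity $|x'|^{2}-|x|^{2}=4t(t-\langle x,\vec{v}\rangle)\geq 4at$ and then invoke the strict monotonicity of $\varphi$, respectively of ${\cal G}$ (for $\alpha=0$ the paper simply cites \cite[Claim 1, Step 3]{MSHS2}, whereas you carry out the computation directly). In fact you are more careful than the paper at the one delicate point, namely converting the lower bound for $\varphi(u^{*}_{t})(x)-\varphi(u_{\Sigma})(x)$ into the uniform gap $\epsilon_0$ in $u$ itself, which requires beating $\epsilon_0\,\dot{\varphi}(u_{\Sigma}(x)+\epsilon_0)=O\left(e^{\alpha|x|^{2}/2}\right)$ rather than mere positivity; note only that both your argument and the paper's rely on the standing assumption $t>r_1$ coming from Lemma \ref{lema1} (the paper uses it implicitly when writing $e^{4\alpha R a}$, you use it explicitly in both cases), which is exactly the context in which the lemma is applied.
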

\begin{proof}
If $\alpha >0$ then, from \eqref{defasym} and \eqref{ustar}, we obtain
\begin{align*}
\varphi(u_t^*)(x) - \varphi(u_\Sigma)(x) &\geq C\, e^{\alpha \, |x|^2}\left( e^{4\alpha t (t - \langle x,\vec{v}\rangle )} -1\right)\\
& - M \left( 2|x|^2 + 4 t (t - \langle x,\vec{v}\rangle )\right),
\end{align*}
for some positive constant $M$. Hence, taking $\lambda$ such that $$ \frac{1+\sqrt{1+\lambda}}{\lambda}< \frac{R}{2 t}$$ and $R>\alpha^{-1}$, 
we have that $ 4 t (t - \langle x,\vec{v}\rangle )\leq \lambda |x|^2$ and 
\begin{align*}
\varphi(u_t^*)(x) - \varphi(u_\Sigma)(x) &> C\, e^{\alpha \, R^2}\left( e^{4\alpha R\, a} -1- M e^{-\alpha \,R^2}(\lambda + 2) R^2\right)>0
\end{align*}
for $R$ large enough. The  result follows because $\varphi$ is strictly increasing.

\

When $\alpha =0$, we can estimate ${\cal G}(u_t^*)(x)-  {\cal G}(u_\Sigma)(x)$  as in \cite[Claim 1, Step 3]{MSHS2}  and to use that ${\cal G}$ is a strictly increasing function.
\end{proof}
\subsection{{\sc Proof of Theorem B}}
The main idea is to use the Alexandrov's reflection principle, \cite{Al},  for proving  that $\Sigma$ is symmetrical with respect to $\Pi_{\vec{v}}(0)$.  For proving that, it is not difficult to  see that  Lemma \ref{lema1} and Lemma \ref{lema2} are the fundamental facts we need to check that all the steps in the proof of Theorem A  in \cite{MSHS2} can be adapted to our case and for getting  to prove that $0\in\mathcal{A}$ were 
$$\mathcal{A}:=\{t\geq 0:\Sigma_{+}(t)  \emph{is a graph over } \Pi_{\vec{v}}(t) \emph{ and } \Sigma_{+}^{*}(t)\ge_{\vec{v}}\Sigma_{-}(t)\}.$$
 A symmetrical argument gives that $\Sigma_{-}^{*}(0)\leq_{\vec{v}} \Sigma_{+}(0)$. Hence, $\Sigma_{+}^{*}(0)=\Sigma_{-}(0)$ and  $\Sigma$ is symmetric respect to the plane $\Pi_{\vec{v}}(0)$. As $\vec{v}=(\cos\theta, \sin\theta, 0)$ represents any unit horizontal vector,  $\Sigma$ would be a revolution surface  touching the axis of revolution, that is, a $[\varphi,\vec{e}_3]$-minimal bowl.
 \begin{remark}
 It would be interesting to give a similar results for $[\varphi,\vec{e}_{3}]$-maximal surfaces in the Lorentz-Minkowski space $\mathbb{L}^{3}$ using the Calabi's Type correspondence of {\rm \cite{MM}}.
 \end{remark}

\end{document}